\newcommand{\Sym}{\text{Sym}}
\newcommand{\TV}{\text{TV}} 
\newcommand{\equnder}[1]{\underset{\eqref{#1}}{=}}
\newcommand{\lequnder}[1]{\underset{\eqref{#1}}{\leq}}
\newcommand{\Cspace}{{\mathcal{C}}_c^\infty}
\newcommand{\dualball}[1]{{\mathcal{B}}^*_{{#1}}}
\newcommand{\lr}[1]{\left( #1 \right)} 
\newcommand{\lre}[1]{\left[ #1 \right]} 
\newcommand{\Rp}{\R \cup \{+\infty\}} 
\newcommand{\TGV}{\text{TGV}_{\lambda_1,\lambda_2}}
\newcommand{\F}{{\mathcal F}}
\newcommand{\G}{{\mathcal G}}
\newcommand{\Sim}{{\mathcal S}}
\newcommand{\Reg}{{\mathcal R}}
\newcommand{\ud}{u^\delta}
\newcommand{\abs}[1]{\left| #1 \right|} 
\newcommand{\norm}[1]{\left\| #1 \right\|} 
\newcommand{\dualnorm}[2]{\left\| #1 \right\|_{*,#2}}
\newcommand\inner[2]{\left\langle #1,#2 \right\rangle} 
\newcommand\set[1]{\left\{ #1 \right\}} 
\newcommand{\N}{\mathbb{N}} 
\newcommand{\R}{\mathbb{R}} 
\newcommand{\utgv}{u_{\lambda_1,\lambda_2}} 
\newcommand{\tvm}[1]{{v_{\lambda_{#1}}^{#1}}}
\newcommand{\tvmm}[1]{{v_{\mu_{#1}}^{#1}}}
\newcommand{\tvmast}[1]{\tvm{#1}\!\!\!{}^*}
\newcommand{\um}{u_{\min}}
\newtheorem{thm}{Theorem}[section]
\newtheorem{example}[thm]{Example}
\newtheorem{remark}[thm]{Remark}
\title{Exact Solutions of One-Dimensional TGV
}
\author{Christiane P\"oschl
\thanks{Department of Mathematics, 
Alpen Adria Universi\"at, Klagenfurt, Austria, 
(Christiane.Poeschl@aau.at).
}
\and Otmar Scherzer 
\thanks{Computational Science Center, 
University of Vienna,
Nordbergstra\ss{}e~15, A-1090 Wien, 
Austria and Johann Radon Institute of
Computational and Applied Mathematics,
Austrian Academy of Sciences, Altenberger Stra\ss{}e~69, A-4040 Linz, Austria,
(Otmar.Scherzer@univie.ac.at).}
}
\begin{document} 

\maketitle 

\begin{abstract} 
$L^2$-TGV-regularization has been introduced by Bredies, Kunisch, and Pock
\cite{BreKunPoc10}. This regularization method requires careful tuning of 
\emph{two} regularization parameters. The focus of this paper is to derive analytical 
results, which allow for characterizing parameter settings, which make this 
method in fact different from $L^2-\TV$ (the ROF-model) and $L^2-\TV^2$ 
regularization, respectively.
In this paper we also provide explicit solutions of TGV-denoising for 
particular
one-dimensional function data.
\end{abstract}

\section{Introduction}
\label{sec:intro}
TGV-denoising has been introduced in \cite{BreKunPoc10}: Given $k \in \N_0$ 
and a  function $\ud: \Omega \to \R$, 
where $\Omega \subset \R^d$, the method consists in determining 
\begin{equation}
\label{eq:L2TGV}
u_{\vec{\lambda}}:= \text{argmin} \set{\mathcal{G}_{\vec{\lambda}}^k(u): u \in L^2(\Omega)}\,,
\end{equation}
where
\begin{equation}
\label{eq:tgv_min}
\mathcal{G}_{\vec{\lambda}}^k(u):=
\frac{1}{2}\int_\Omega (u-\ud)^2\,dx\; + 
\; TGV_{\vec{\lambda}}^k(u)
\end{equation}
with
\begin{equation*}
\begin{aligned}
 ~ & \text{TGV}_{\vec{\lambda}}^k(u)\\
 = & \sup 
 \set{\int_\Omega u (\nabla \cdot)^k \phi\,dx : \phi \in 
\Cspace(\Omega,\Sym^k(\R^d)), \norm{(\nabla \cdot)^{k-l} \phi}_{L^\infty}\leq
\lambda_l,\; l=1,\dots,k}\,,
\end{aligned}
\end{equation*}
and $\Sym^k(\R^d)$ denotes the space of symmetric tensors of order $k$ with
arguments in $\R^d$. 
There can be imagined several realizations of $\norm{(\nabla 
\cdot)^{k-l}v}_{L^\infty}$ to be implemented - one of them is 
$\norm{(\nabla \cdot)^{k-l}\phi}_{L^\infty} = \sup\set{\abs{(\nabla 
\cdot)^{k-l}\phi(x)}_{l^2}: x \in \Omega}$, where $\abs{\cdot}_{l^2}$ 
denotes the Frobenius-norm of a tensor. Note that the definition here is 
slightly different to \cite{BreKunPoc10}, where in the original 
definition, the enumeration of the indices of $\lambda_i$ is reversed. 

All along this paper, for the simplification of notation and considerations, we 
restrict attention to the case $k=2$. Consequently, from now on, we omit the superscript $k$ 
in the TGV-functional.

The goal of this paper is to increase the knowledge about structural properties 
of TGV-denoising, and to put this method into perspective with 
total variation and second order total variation regularization by analytical 
means. This is done in two different ways:
\begin{enumerate}
\item The main result of this paper concerns the characterization of the sets of 
regularization parameters 
      $\vec{\lambda} = (\lambda_1,\lambda_2)$ such that the minimizers of $\TGV$ 
either equal total variation minimizers 
      or minimizers of the second order total variation minimization, and to 
determine sets of parameters, where TGV minimization is in 
      fact different from either one of them.
\item We study analytical solutions of simple one-dimensional test-cases, where
      $d=1$, $\Omega = (-1,1)$, and $k=2$. In this simple situation TGV-denoising 
\eqref{eq:L2TGV} simplifies to minimizing the functional
      \begin{equation}\label{eq:l2tgv}
      \begin{aligned}
      \G: L^2(-1,1) & \rightarrow \Rp\,,\\
        u & \to \G(u):=\frac{1}{2}\int_{-1}^1 (u-\ud)^2\,dx\; + \; \TGV(u)\;.
      \end{aligned}
      \end{equation}
      In the specific one-dimensional situation $\TGV$ can be written as
 \begin{equation} \label{eq:tgv_1d}
 \begin{aligned}
 ~ & \TGV(u) \\
 & = \sup \set{\int_{-1}^1 u \phi''\,dx: \phi \in  \Cspace(-1,1)\,,
 \norm{\phi'}_{L^\infty} \leq \lambda_1\,, \norm{\phi}_{L^\infty} \leq 
\lambda_2}\;.
 \end{aligned}
 \end{equation}
Similar, as in our previous work \cite{PoeSch08} for total variation
minimization and minimization with totally bounded second derivative, 
it is possible to characterize the minimizers of $\TGV$ in a simple manner using
Fenchel-duality theory. We show that the minimizers 
are either equal to $\ud$ or piecewise affine linear 
that bend or jump, whenever
the first or second primitives of the dual functions 
attains an extremum.

We then study explicit solutions of TGV-denoising for the basic test data 
cases 
\begin{equation*}
x \to \ud(x) =\abs{x}-\frac{1}{2}\,,
\end{equation*}
\begin{equation*}
x \to \ud(x) = {\bf 1}_{[-1/2,1/2]}(x)-\frac{1}{2}\,,
\end{equation*}
and 
\begin{equation*}
x \to \ud(x) = x^2-\frac{1}{3}\,.
\end{equation*}
For the first two exemplary cases the minimizers of the TGV-functional
\eqref{eq:l2tgv} are weighted sums of $\TV$-minimizers and $\TV^2$-minimizers.
The second example has also been studied in \cite{BenBruBurMue13} - however, 
there no complete characterization of the parameter sets have been stated where the 
$\TGV$-minimizer equals either $L^2-\TV$, $L^2-\TV^2$-minimizers, which is a focus topic of this work.
\end{enumerate}

The outline of this paper is as follows:
In Section \ref{sec:notation} we introduce preliminary notation and the main
definitions. We derive characteristic properties of minimizers of the
TGV-denoising problem (in $d$-dimensions) via convex duality theory (Sections \ref{sec:fenchel}, \ref{sec:homogeneous}). Later we restrict attention to the case $d=1$ 
and show that minimizers are either equal to the data or piecewise affine linear (cf. Section \ref{sec:characterization}).
Finally we calculate explicit minimizers for the $\TGV$-functional in the
case where the data are the absolute value (Section \ref{se:ex1}), 
the indicator function (Section \ref{se:ex2}), or a quadratic polynomial 
(Section \ref{se:ex3}), respectively.

\section{Notation} 
\label{sec:notation}
Let $\Omega \subseteq \R^d$ be a bounded, connected domain with Lipschitzian boundary. Moreover, let $\ud : \Omega \to \R \in L^2(\Omega)$. 
For $i \in \N$ we define the following functional:
\begin{equation}
\label{eq:l2TV}
\begin{aligned}
\F^i: L^2(\Omega) &\rightarrow \Rp\,,\\
u &\to \F^i(u):=\frac{1}{2}\int_\Omega (u-\ud)^2\,dx\; + \TV_{\lambda_i}^i(u)
\end{aligned}
\end{equation}
where 
\begin{equation}
\label{eq:def_lambda}
\begin{aligned}
 ~ & \TV_{\lambda_i}^i(u)\\
 := & \sup \set{ \int_\Omega u \phi^{(i)}\,dx : \phi \in 
\Cspace(\Omega,\Sym^k(\R^d)), \norm{(\nabla \cdot)^i \phi}_{L^\infty} \leq \lambda_i}\,,
\end{aligned}
\end{equation}
where 
\begin{equation*}
\norm{(\nabla \cdot)^{k-l}v}_{L^\infty} = \sup\set{\abs{(\nabla 
\cdot)^{k-l}v(x)}_{l^2}: x \in \Omega}\,, 
\end{equation*}
and $\abs{(\nabla \cdot)^i \phi}_{l^2}$ denotes the Frobenius-norm of $(\nabla \cdot)^i \phi$.

The minimizer of \eqref{eq:l2TV} is denoted by $\tvm{i}$.
The minimizer of \eqref{eq:tgv_min} is denoted by $\utgv$.

Because 
\begin{equation}
 \label{eq:TV_lambda}
 \TV_{\lambda_i}^i (u) = \lambda_i \TV^i(u)\,,
\end{equation}
we see that minimization of the functional $\F^i$ from \eqref{eq:l2TV} is standard $L^2-\TV$-minimization with regularization parameter $\lambda_i$. 
$L^2-\TV$-minimization has been studied widely in the literature. In the 
one-dimensional $d=1$ setting it is used for regression (see e.g. \cite{MamGee97,DavKov01}) - analytical solutions have been calculated for instance in \cite{ChaEse05}. 
In image processing, for $d \geq 2$,  $L^2-\TV$-regularization it is called the Rudin-Osher-Fatemi 
model \cite{RudOshFat92}. Regularization with derivatives of higher order 
bounded variation has been studied for instance in 
\cite{Ste06,Poe08,PoeSch08,SchGraGroHalLen09}.

\section{Fenchel duality and applications}
\label{sec:fenchel}

In the following let $\mathcal{H}$ be a Hilbert-space. 
In this case it is common to identify $\mathcal{H}$ with its dualspace 
and to identify the dual pairing $\inner{u^*}{u}$ on $\mathcal{H}^*$ and 
$\mathcal{H}$ with the inner product on $\mathcal{H}$. 
For instance when $\mathcal{H}=L^2(\Omega)$, $\inner{u^*}{u} = \int_{\Omega} u^* u\; dx$.

We start by defining the $\ast$-number, which is a generalization of the 
dualnorm of a Banach-space,
to convex, positively homogeneous functionals.

\begin{definition}
A proper, convex functional ${\mathcal T}: \mathcal{H} \to \Rp$ is positively 
homogenuous, if there exists some 
$l=1,2\ldots$ such that ${\mathcal T}: \mathcal{H} \to \Rp$ is $l$-homogeneous, 
which means that
 \begin{equation*}
  {\mathcal T}(\lambda u) = \abs{\lambda}^l {\mathcal T}(u)\,, \quad \forall 
\lambda \in \R\;.
 \end{equation*}
\end{definition}

\begin{definition}[The $\ast$-number]\label{de:astnumber}
Let $\Reg: \mathcal{H} \rightarrow \Rp$ be a positively homogeneous and convex 
functional .
For $u^* \in \mathcal{H}$ define
\begin{equation*}
 \dualnorm{u^*}{\Reg}:=
\sup \set{\inner{u^*}{u}: u\in \mathcal{H}, \Reg(u)\leq 1}\,.
\end{equation*}

Moreover define 
\begin{equation*}
\dualball{\Reg}:=\set{u^* \in \mathcal{H}: \dualnorm{u^*}{\Reg}\leq 1}
\end{equation*}
as the dualball with respect to the $\ast$-number.
\end{definition}

\begin{example}
From \eqref{eq:TV_lambda} it follows that 
\begin{equation}
 \label{eq:ast_tv}
 \dualnorm{u^*}{\TV_{\lambda_i}^i} = \frac{1}{\lambda_i} \dualnorm{u^*}{\TV_{1}^i}\;.
\end{equation}
Note that according to our definition 
\begin{equation*}
 \TV_1^i = \TV^i\,,\qquad \forall i \in \N\,.
\end{equation*}
\end{example}

\begin{lemma}

\label{le:perp}
(see \cite[Lemma 4.6]{SchGraGroHalLen09}).
 Let $\Reg$ be positively homogeneous and set
\begin{equation*}
 \mathcal{P}:=\set{p \in  \mathcal{H}: \Reg(p) =0}\;.
\end{equation*}
From the assumptions that $\Reg$ is positively homogeneous and convex, it follows that 
$\mathcal{P}$ is a linear subspace of $\mathcal{H}$. Denote by
\begin{equation*}
 \mathcal{P}^\perp:=\set{u^*\in \mathcal{H}: 
\inner{u^*}{p}=0,p\in \mathcal{P}}\;.
\end{equation*}
Then $\dualnorm{u^*}{\Reg}=+\infty$ for all $u^* \not \in \mathcal{P}^\perp$.

\end{lemma}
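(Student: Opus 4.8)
The plan is to first establish that $\mathcal{P}$ is a linear subspace, and then to derive the statement about $\dualnorm{u^*}{\Reg}$ by exploiting that a nonzero vector in $\mathcal{P}$ can be scaled arbitrarily without changing the value of $\Reg$. For the first part, I would argue as follows. Since $\Reg$ is $l$-homogeneous for some $l \in \{1,2,\ldots\}$, if $p \in \mathcal{P}$ then $\Reg(\mu p) = \abs{\mu}^l \Reg(p) = 0$ for every $\mu \in \R$, so $\mathcal{P}$ is closed under scalar multiplication. For closure under addition, take $p_1, p_2 \in \mathcal{P}$. Using convexity, $\Reg\bigl(\tfrac{1}{2}(p_1+p_2)\bigr) \leq \tfrac{1}{2}\Reg(p_1) + \tfrac{1}{2}\Reg(p_2) = 0$; since $\Reg \geq 0$ (it is the supremum of linear functionals over a set symmetric under $\phi \mapsto -\phi$, or more abstractly this can be read off from homogeneity and convexity at $0$), we get $\Reg\bigl(\tfrac12(p_1+p_2)\bigr)=0$, and then homogeneity with $\mu = 2$ gives $\Reg(p_1+p_2)=0$. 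Hence $p_1 + p_2 \in \mathcal{P}$, and $\mathcal{P}$ is a linear subspace.

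For the main assertion, suppose $u^* \notin \mathcal{P}^\perp$. Then there exists $p \in \mathcal{P}$ with $\inner{u^*}{p} \neq 0$; replacing $p$ by $-p$ if necessary we may assume $c := \inner{u^*}{p} > 0$. For any scalar $t > 0$ the vector $tp$ still satisfies $\Reg(tp) = \abs{t}^l \Reg(p) = 0 \leq 1$, so $tp$ is admissible in the supremum defining $\dualnorm{u^*}{\Reg}$. Therefore
\begin{equation*}
\dualnorm{u^*}{\Reg} \geq \sup_{t>0} \inner{u^*}{tp} = \sup_{t>0} tc = +\infty\,,
\end{equation*}
which is the claim.

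The argument is essentially routine; the only point requiring a little care is the nonnegativity of $\Reg$ used in the subspace argument, which should be noted explicitly (it follows because $\Reg$ is proper, convex, and positively homogeneous, forcing $\Reg(0)=0$ and hence $\Reg \geq 0$ by convexity: $0 = \Reg(0) \leq \tfrac12\Reg(u) + \tfrac12\Reg(-u) = \Reg(u)$ using $l$-homogeneity with $\mu=-1$). There is no substantial obstacle here; the lemma is a direct consequence of homogeneity, and the reference to \cite[Lemma 4.6]{SchGraGroHalLen09} confirms the statement is standard.
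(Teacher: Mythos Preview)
Your proof is correct. The paper does not actually prove this lemma; it only states it with a reference to \cite[Lemma~4.6]{SchGraGroHalLen09}, so there is nothing to compare against, and your argument supplies a complete and self-contained justification where the paper gives none.
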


\begin{definition}
Assume that $i=1,2,\ldots$.
Let $\mathcal{H}=L^2(\Omega)$, $\Reg=\TV_{\lambda_i}^i$, and let $\mathcal{P}^i$ be the 
set of polynomials of order $i-1$. Then 
\begin{equation}
\label{Hspace*}
 \mathcal{H}^i := \set{u \in L^2(\Omega) : \int_\Omega u(x) x^j \,dx = 0,\, 
\abs{j}=0,1,\ldots,i-1,\, j \in \N_0^d} = \mathcal{P}^i{}^\bot\;.
\end{equation}
\end{definition}
\begin{remark}
Because $C_c^\infty(\Omega)$ is dense in $L^2(\Omega)$, and $\Omega$ is assumed to be bounded, it follows that
 \label{re:hspace*}
 \begin{equation*}
  \mathcal{H}^i := \overline{\set{u \in C_c^\infty(\Omega) : \int_\Omega u(x) x^j \,dx = 0,\, 
\abs{j}=0,1,\ldots,i-1,\, j \in \N_0^d}}\;.
\end{equation*}
\end{remark}

The following lemma is a direct consequence of 
Lemma \ref{le:perp} and the above definition: 
\begin{lemma} \label{re:invarianz}
\begin{itemize}
 \item{$\TV_{\lambda_i}^i$, $i=1,2$:} For all $u^* \not \in \mathcal{H}^i$, $\dualnorm{u^*}{\TV_{\lambda_i}^i}=+\infty$. 
\item{TGV-functional:} For all $u^* \not \in \mathcal{H}^2$,
$\dualnorm{u^*}{\TGV}= +\infty$. 
\end{itemize}
\end{lemma}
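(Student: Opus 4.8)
The plan is to derive both statements as immediate corollaries of Lemma~\ref{le:perp} together with the identification of the null spaces of the regularizers $\TV_{\lambda_i}^i$ and $\TGV$. For Lemma~\ref{le:perp} to apply to a functional $\Reg$, we only need $\Reg$ to be positively homogeneous and convex; both $\TV_{\lambda_i}^i$ (being $\lambda_i$ times the $1$-homogeneous seminorm $\TV^i$, cf.~\eqref{eq:TV_lambda}) and $\TGV=\TGV_{\lambda_1,\lambda_2}$ (a supremum of linear functionals over an admissible set of test functions, hence a sublinear, $1$-homogeneous functional) qualify. So the whole matter reduces to computing $\mathcal{P}=\{p: \Reg(p)=0\}$ in each case and checking it equals $\mathcal{P}^i$ (resp.\ $\mathcal{P}^2$), so that $\mathcal{P}^\perp=\mathcal{H}^i$ (resp.\ $\mathcal{H}^2$) by \eqref{Hspace*}.

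First I would treat the $\TV_{\lambda_i}^i$ case, $i=1,2$. Since $\TV_{\lambda_i}^i(u)=\lambda_i\TV^i(u)$ and $\lambda_i>0$, the null space is unchanged under the scaling, so it suffices to show $\TV^i(u)=0$ iff $u$ is (a.e.\ equal to) a polynomial of degree at most $i-1$. One direction is a direct computation: if $u$ is such a polynomial then $\int_\Omega u\,\phi^{(i)}\,dx=0$ for every admissible $\phi\in\Cspace$ by integrating by parts $i$ times (all boundary terms vanish because $\phi$ has compact support), so the supremum defining $\TV^i(u)$ is $0$. For the converse, if $\TV^i(u)=0$ then $\int_\Omega u\,\phi^{(i)}\,dx=0$ for all $\phi\in\Cspace(\Omega)$ with the appropriate $L^\infty$-bounds; by homogeneity of the test condition we may drop the bound and conclude $\int_\Omega u\,\psi\,dx=0$ for all $\psi$ that are $i$-th derivatives of compactly supported smooth functions, i.e.\ the $i$-th distributional derivative of $u$ vanishes, which on a connected domain forces $u\in\mathcal{P}^i$. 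Hence $\mathcal{P}=\mathcal{P}^i$ and the result follows from Lemma~\ref{le:perp} and the identity $\mathcal{P}^i{}^\perp=\mathcal{H}^i$ in \eqref{Hspace*}.

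For the $\TGV$ part, the structure is the same, but now the null space is a priori larger because the test functions in \eqref{eq:tgv_1d}, resp.\ its $d$-dimensional analogue, must satisfy \emph{two} constraints ($\norm{(\nabla\cdot)^{k-l}\phi}_{L^\infty}\le\lambda_l$ for $l=1,2$). I would show $\TGV(u)=0$ iff $u\in\mathcal{P}^2$ (affine functions). Again one inclusion is integration by parts: if $u$ is affine then $\int_\Omega u\,(\nabla\cdot)^2\phi\,dx=0$ for every admissible $\phi$, so $\TGV(u)=0$. For the converse, observe that the admissible set contains, for every fixed $\psi\in\Cspace(\Omega,\Sym^2(\R^d))$, a sufficiently small scalar multiple $\varepsilon\psi$ (which satisfies both bounds once $\varepsilon$ is small), so $\TGV(u)=0$ forces $\int_\Omega u\,(\nabla\cdot)^2\psi\,dx=0$ for all such $\psi$; that is, the second-order distributional operator $(\nabla\cdot)^2$ applied to $u$ in the duality sense vanishes, and on a connected Lipschitz domain this yields that $u$ is affine. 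Thus $\mathcal{P}=\mathcal{P}^2$, and Lemma~\ref{le:perp} combined with $\mathcal{P}^2{}^\perp=\mathcal{H}^2$ gives $\dualnorm{u^*}{\TGV}=+\infty$ for all $u^*\notin\mathcal{H}^2$.

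The only genuinely non-routine point is the converse direction in each case: passing from ``the relevant integral vanishes against all test functions obeying the $L^\infty$ side-constraints'' to ``it vanishes against \emph{all} test functions'' (so that a distributional-derivative argument applies). This is where the positive homogeneity of the constraint set is used — the admissible set of $\phi$'s is a neighborhood of $0$ in the relevant topology, stable under small scaling — and for $\TGV$ one must be a little careful that shrinking a single $\psi$ can be done compatibly with \emph{both} constraints simultaneously, which it can since each constraint is a norm bound. After that, the identification of kernels of constant-coefficient differential operators with polynomials on a connected domain is classical, and the statement drops out of Lemma~\ref{le:perp} with no further work.
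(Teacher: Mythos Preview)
Your proposal is correct and follows exactly the approach the paper takes: the paper states (without a displayed proof) that the lemma ``is a direct consequence of Lemma~\ref{le:perp} and the above definition,'' i.e., apply Lemma~\ref{le:perp} after identifying the null spaces of $\TV_{\lambda_i}^i$ and $\TGV$ with $\mathcal{P}^i$ and $\mathcal{P}^2$ respectively, and then use $\mathcal{H}^i=(\mathcal{P}^i)^\perp$ from \eqref{Hspace*}. Your write-up simply spells out the null-space identification (via the scaling/homogeneity argument to remove the $L^\infty$ constraints and the classical characterization of distributions with vanishing higher derivatives) that the paper leaves implicit.
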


The definitions of the $\dualnorm{\cdot}{\TV_{\lambda_i}^i}$-norms are similar as in Meyer's 
book 
\cite{Mey01}. The difference is that there $\Omega = \R^d$ is considered, and 
the elements of the space $L^2(\R^d)$ satisfy natural boundary conditions at 
$\infty$. 
Since we consider bounded domains $\Omega$ we restrict attention to the 
subspaces 
$\mathcal{H}^i$ rather than to $L^2$, as in Meyer's book. 
Another possibility, instead of factorizing out polynomials, is to consider 
boundary conditions on the bounded domain $\Omega$, which has been realized in 
\cite{AubAuj05}.

The Fenchel dual of a proper functional $\mathcal{S}: \mathcal{H} \rightarrow 
\Rp$ 
is defined as
\begin{equation*}
\begin{aligned}
 \mathcal{S}^*: \mathcal{H} &\rightarrow \Rp\;.\\
u^* & \mapsto \Sim^*:=\sup_{u\in \mathcal{H}}\set{ \inner{u^*}{u} - \Sim(u)}\;.
\end{aligned}
\end{equation*}

The following results can be found in \cite{EkeTem76}, see also \cite{SchGraGroHalLen09}:
\begin{remark}
\label{re:2.5}
\begin{itemize}
\item Let ${\mathcal T}$ be $1$-homogeneous, then the Fenchel dual function is a 
characteristic function of a convex set $\mathcal{C}^*$. That is, 
 \begin{equation}
 \label{eq:char}
  {\mathcal T}^*(u^*) = \chi_{{\mathcal C}^*}(u^*) = 
  \left\{ \begin{array}{rl} 0 & \text{ for } u^* \in {\mathcal C}^*\,,\\
                                                     +\infty & \text{ else }\;.
                                                    \end{array} \right.
\end{equation}
In particular, for ${\mathcal T}$ $1$-homogeneous,
\begin{equation}
\label{eq:relation_dualball}
 \mathcal{C}^*= \dualball{\mathcal{T}}\;.
\end{equation}
\item Let $\Sim, \Reg$ be convex and proper functionals defined on $\mathcal{H}$.
Denote by $\hat{u}$ a minimizer of the functional $u \to \Sim(u) + \Reg(u)$ and denote by 
$\hat{u}^*$ a minimizer of 
the functional $u^* \to \Sim^*(u^*) + \Reg^*(-u^*)$. 
Then the extremality  conditions holds:
\begin{equation}\label{eq:extremality2}
\hat {u}^*\in \partial \Sim(u) 
\qquad \text{ and }\qquad
-\hat {u}^*\in \partial \Reg(u).
\end{equation}
\end{itemize}
\end{remark}

\begin{example}
\label{ex:dual}
\begin{enumerate}
\item The dual functional of 
\begin{equation}
\label{eq:s}
\begin{aligned} 
 \Sim: L^2(\Omega) &\to \R\,,\\
 u & \to \frac{1}{2} \int_\Omega(u-\ud)^2\,dx\,,
\end{aligned}
\end{equation}
is given by 
\begin{equation}
\label{eq:s*}
\begin{aligned} 
 \Sim^*: L^2(\Omega) &\to \R\,,\\
  u^* &\to \frac{1}{2} \int_\Omega u^*{}^2\,dx + \int_\Omega u^* \ud \,dx\;.
\end{aligned}
\end{equation}
In the case of the quadratic functional the extremality condition
\eqref{eq:extremality2} for a
minimizer shows:
\begin{equation}
\label{eq:stern_and_u}
 \hat{u}^* = \hat{u}-\ud.
\end{equation}
\item Let $$\Reg = \TV_{\lambda_i}^i : L^2(\Omega) \to \Rp\,,$$ which is $1$-homogeneous. 
Then 
 \begin{equation*}
  \Reg^*(u^*) = \sup_{u \in L^2(\Omega)} \set{ \int_\Omega u^* u\,dx - 
\TV_{\lambda_i}^i(u)} 
                     \underset{\eqref{eq:TV_lambda}}{=} \chi_{\dualball{\lambda_i \TV^i}}(u^*)\,,
 \end{equation*}
 where the characteristic function is $0$ on the closed unit ball
 ${\mathcal C}^* = \dualball{\lambda_i \TV^i}$ and $+\infty$ else. 
\item Let $$\Reg = \TGV: L^2(\Omega) \to \Rp\,,$$ which is $1$-homogeneous 
too. Thus $\Reg^*(u^*) = \chi_{\dualball{\TGV}}$.
\end{enumerate}
\end{example}

\section{Regularization Methods with $1$-Homogeneous Regularizers}
\label{sec:homogeneous}
In the following we derive some properties of regularization functionals with $\Sim$ from 
\eqref{eq:s} and $1$-homogeneous regularizers $\Reg$. We denote by 
\begin{equation*}
 u \to {\mathcal E}(u): = \Sim(u) + \Reg(u)\;.
\end{equation*}

\begin{remark}\label{re:dual}
From \eqref{eq:relation_dualball} we know that 
$$\Reg^* = \chi_{\dualball{\Reg}}\;.$$
Then the extremality condition \eqref{eq:stern_and_u} guarantees that 
$\hat{u}^* = \hat u-\ud \in \dualball{\Reg}$ 
and from Fenchel-duality theory we see that
\begin{equation}
\label{eq:help}
\begin{aligned}
& \frac{1}{2} \int_\Omega (\hat{u}-\ud)^2\,dx + \Reg(\hat{u})\\
= & \inf \set{\Sim(u) + \Reg(u)}\\
= & - \inf \set{\Sim^*(u^*) + \Reg^* (-u^*)} \\
= & - \Sim^*(\hat{u}^*) - \underbrace{\chi_{\dualball{\Reg}}(-\hat{u}^*)}_{=0}\\
\underbrace{=}_{\eqref{eq:s*}} & - \int_\Omega \frac{1}{2} \lr{\hat{u}-\ud}^2 + \ud (\hat{u} -\ud)\,dx\;.
\end{aligned}
\end{equation}

In summary we have shown that 
\begin{equation}
 \label{eq:opti}
 \begin{aligned}
 \dualnorm{\hat u^*}{\Reg} &\leq 1\,,\\
\Reg(\hat{u}) &= 
- \int_\Omega  
\lr{\hat{u}-\ud}^2 + \ud \lr{\hat{u} -\ud}
\,dx\; =-\int_\Omega 
\lr{\hat{u}-\ud} \hat{u}\,dx\;.
\end{aligned}
\end{equation}
\begin{itemize}
\item 
\eqref{eq:opti} applied to $L^2-\TV$-minimization, shows that 
$\tvm{i}$, the minimizer of $\F^i$ (see \eqref{eq:l2TV}), 
satisfies  
\begin{equation}\label{eq:optiTV}
\dualnorm{\tvmast{i}}{\TV_{\lambda_i}^i} \leq 1 \text{ and }
\TV_{\lambda_i}^i(\tvm{i}) = - \int_\Omega (\tvm{i}-\ud) \tvm{i}\,dx\;.
\end{equation}
$\dualnorm{\tvmast{i}}{\TV_{\lambda_i}^i}\leq 1$ implies that $\tvmast{i} \in 
\mathcal{H}^i$, because 
according to Lemma \ref{le:perp} $\dualnorm{\tvmast{i}}{\TV_{\lambda_i}^i}$ would be $+\infty$ 
otherwise.
This, in particular, means that if $\ud \in \mathcal{H}^i$, then also 
$\tvm{i} \in \mathcal{H}^i$.
\item
\eqref{eq:opti} applied to $\TGV$-minimization, shows that $\utgv$, the 
minimizer of \eqref{eq:l2tgv}, satisfies  
\begin{equation}\label{eq:optiTGV}
\dualnorm{\utgv^*}{\TGV} \leq 1 \text{ and }
\TGV(\utgv) = - \int_\Omega (\utgv-\ud) \utgv\,dx\;.
\end{equation}
Now 
$\dualnorm{\utgv^*}{\TGV}\leq 1$
implies that $\utgv^* \in \mathcal{H}^2$ (Lemma \ref{le:perp}).
Since $\utgv^* = \utgv-\ud$, $\ud \in \mathcal{H}^2$ implies that also
$\utgv \in \mathcal{H}^2$, hence by assuming that 
$\ud \in \mathcal{H}^2$ we can equivalently minimize $\G$ over $\mathcal{H}^2$ 
instead of $L^2(\Omega)$. 
\end{itemize}
\end{remark}

\begin{lemma}\label{le:min}
If $u$ satisfies \eqref{eq:opti} then $u$ minimizes ${\mathcal E}$.

In particular 
\begin{itemize}
\item $u$ minimizes $\F^i$ iff $- \int_\Omega (u-\ud) u \,dx = \TV_{\lambda_i}^i(u)$ and 
$u-\ud$ in $\dualball{\TV_{\lambda_i}^i}$, 
\item and $u$ minimizes the TGV-functional iff $- \int_\Omega (u-\ud) u \,dx = \TGV(u)$ and 
$u-\ud$ in $\dualball{\TGV}$. 
\end{itemize}
\end{lemma}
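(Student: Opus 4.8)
The plan is to derive this as a direct corollary of the Fenchel duality framework already assembled in Remark~\ref{re:dual}, which establishes the \emph{necessary} direction; the content of Lemma~\ref{le:min} is the \emph{sufficiency} of condition \eqref{eq:opti}. First I would recall that $\Reg$ is $1$-homogeneous, so by Remark~\ref{re:2.5} its Fenchel dual is $\Reg^* = \chi_{\dualball{\Reg}}$. Now suppose $u$ satisfies \eqref{eq:opti}, i.e. $\dualnorm{u^*}{\Reg} \leq 1$ with $u^* := u - \ud$, and $\Reg(u) = -\int_\Omega (u-\ud)u\,dx$. The goal is to show ${\mathcal E}(u) = \Sim(u) + \Reg(u)$ equals the infimum $\inf_v {\mathcal E}(v)$.

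The key step is a chain of (in)equalities. On the one hand, weak duality gives $\inf_v {\mathcal E}(v) \geq -\inf_{v^*}\set{\Sim^*(v^*) + \Reg^*(-v^*)} \geq -\Sim^*(-u^*) - \Reg^*(u^*)$... more precisely, working with the correct signs as in \eqref{eq:help}: for any feasible dual element the Fenchel--Young inequality yields $\Sim(v) + \Reg(v) \geq -\Sim^*(w) - \Reg^*(-w)$ for all $v, w$, hence $\inf {\mathcal E} \geq -\Sim^*(u^*) - \Reg^*(-u^*)$. Since $\dualnorm{u^*}{\Reg}\leq 1$ means $-u^* \in \dualball{\Reg}$ up to the sign convention (here $\dualball{\Reg}$ is symmetric because $\Reg$ is $1$-homogeneous, hence even), we get $\Reg^*(-u^*) = \chi_{\dualball{\Reg}}(-u^*) = 0$. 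Plugging in $u^* = u - \ud$ and the explicit form \eqref{eq:s*} of $\Sim^*$, one computes $-\Sim^*(u^*) = -\tfrac12\int_\Omega(u-\ud)^2\,dx - \int_\Omega \ud(u-\ud)\,dx = -\int_\Omega (u-\ud)u\,dx + \tfrac12\int_\Omega(u-\ud)^2\,dx$. By the second assumption in \eqref{eq:opti}, $-\int_\Omega(u-\ud)u\,dx = \Reg(u)$, so $-\Sim^*(u^*) = \Reg(u) + \tfrac12\int_\Omega(u-\ud)^2\,dx = \Reg(u) + \Sim(u) = {\mathcal E}(u)$. Therefore $\inf {\mathcal E} \geq {\mathcal E}(u)$, and since trivially ${\mathcal E}(u) \geq \inf{\mathcal E}$, equality holds and $u$ is a minimizer.

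For the two itemized statements, I would simply apply the general equivalence with $\Reg = \TV^i_{\lambda_i}$ (so $\dualball{\Reg} = \dualball{\TV^i_{\lambda_i}}$) and with $\Reg = \TGV$ respectively, noting that in both cases ${\mathcal E} = \F^i$ resp.\ $\G$; the necessity direction for these is exactly \eqref{eq:optiTV} and \eqref{eq:optiTGV} from Remark~\ref{re:dual}, and sufficiency is the general claim just proved. The main obstacle is bookkeeping with the sign conventions: the $\ast$-number is defined via $\sup\set{\inner{u^*}{u}: \Reg(u)\leq 1}$ without absolute value, so one must invoke that $\Reg$ being $l$-homogeneous forces $\dualball{\Reg}$ to be symmetric about the origin in order to freely pass between $u^*\in\dualball{\Reg}$ and $-u^*\in\dualball{\Reg}$, which is what makes $\Reg^*(-u^*)=0$ follow from $\dualnorm{u^*}{\Reg}\leq 1$. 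Once that symmetry is noted, the argument is the standard converse half of the Fenchel duality equality and requires only the explicit formula \eqref{eq:s*} and the hypothesis \eqref{eq:opti}.
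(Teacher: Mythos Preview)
Your argument is correct and in fact cleaner than the paper's. The paper proves the lemma by contradiction: it assumes some $v\neq u$ with ${\mathcal E}(v)<{\mathcal E}(u)$, uses the optimality condition for the true minimizer $v$ to rewrite both sides, and simplifies to $-\tfrac12\int v^2 < -\tfrac12\int u^2$; then, on the dual side, since $u-\ud\in\dualball{\Reg}$ and $v-\ud$ minimizes $\Sim^*$ over $\dualball{\Reg}$, one gets $\Sim^*(v-\ud)\leq\Sim^*(u-\ud)$, which simplifies to the reverse inequality $\tfrac12\int v^2 \leq \tfrac12\int u^2$, a contradiction.

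Your route is the standard ``zero duality gap'' computation: from Fenchel--Young you get $\inf{\mathcal E}\geq -\Sim^*(u^*)-\Reg^*(-u^*)$ for $u^*=u-\ud$, and then you verify by direct calculation (using \eqref{eq:s*} and the hypothesis $\Reg(u)=-\int(u-\ud)u\,dx$) that the right-hand side equals ${\mathcal E}(u)$. This avoids the detour through a hypothetical second minimizer and does not presuppose that a minimizer of ${\mathcal E}$ exists. Your explicit remark that $\dualball{\Reg}$ is symmetric (because $\Reg(-u)=\Reg(u)$ under the paper's homogeneity convention) is a point the paper uses silently in \eqref{eq:help} when it writes $\chi_{\dualball{\Reg}}(-\hat u^*)=0$; making it explicit is a genuine improvement. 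The two itemized specializations are handled identically in both approaches.
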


\begin{proof}
We prove the lemma by contradiction: Assume that $u$ satisfies the assumptions 
of the lemma but is not a minimizer of ${\mathcal E}$. Then there exists some $v 
\not= u$ 
such that $v$ minimizes ${\mathcal E}$ and ${\mathcal E}(v)<{\mathcal E}(u)$. 
From 
\eqref{eq:optiTGV} it then follows that
\begin{equation*}
\Reg(v) = -\int_\Omega (v-\ud)v\,dx\;.
\end{equation*}
Therefore, from the assumption that $u$ satisfies \eqref{eq:opti}, we see that
\begin{equation*}
\begin{aligned}
~ & \frac{1}{2} 
\int_\Omega \lr{v-\ud}^2 \, dx - \int_\Omega (v-\ud) v\,dx \\
= & \quad 
\frac{1}{2} \int_\Omega \lr{v-\ud}^2\,dx + \Reg (v)\\
< & \quad \frac{1}{2} \int_\Omega \lr{u-\ud}^2\,dx + \Reg(u)\\
= & \quad
\frac{1}{2} \int_\Omega \lr{u-\ud}^2 \, dx - \int_\Omega (u-\ud) u\,dx
\end{aligned}
\end{equation*}
such that 
\begin{equation}\label{eq:contra1}
-\frac{1}{2} \int_\Omega v^2 dx<
-\frac{1}{2} \int_\Omega u^2 dx\;.
\end{equation}
The dual functional of a convex, $1$-homogeneous function $\Reg$, is the 
characteristic function of $\dualball{\Reg}$ (cf. Remark \ref{re:dual}).
The Fenchel-duality theorem (see e.g. \cite{EkeTem76}) states, that 
$v^* := v-\ud$ minimizes the functional 
$w^*\rightarrow \Sim^*(w^*)$ over $\dualball{\Reg}$, 
where $\Sim^*$ is as in 
\eqref{eq:s*}, such that we have now
\begin{align*}
      \Sim^*(v-\ud) 
&=    \int_\Omega \lr{\frac{1}{2}(v-\ud)^2 + (v-\ud)\ud}\,dx\\
&\leq \Sim^*(u-\ud) 
=     \int_\Omega \lr{\frac{1}{2}(u-\ud)^2 + (u-\ud)\ud}\,dx\;.
\end{align*}
The inequality above simplifies to
\begin{equation*}
\frac{1}{2} \int_\Omega v^2 dx\leq
\frac{1}{2} \int_\Omega u^2 dx\;, 
\end{equation*}
such that we obtain a contradiction to \eqref{eq:contra1}.
Hence the assumption that $v \not =u$ is a minimizer of ${\mathcal E}$ was 
wrong.
\end{proof}

\begin{lemma}\label{le:null}
Assume that $\Reg$ is 1-homogeneous functional on  $\mathcal{H}$. Then 
$\um\equiv 0$ minimizes $\mathcal{E}$ if and only if $\dualnorm{\ud}{\Reg} \leq 1$.
\end{lemma}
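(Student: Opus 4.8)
The plan is to read off the answer from the optimality characterisation already established for $1$-homogeneous regularisers, applied to the candidate $u\equiv 0$. By Remark \ref{re:dual} the conditions \eqref{eq:opti} are necessary for a point to minimise $\mathcal{E}=\Sim+\Reg$, and by Lemma \ref{le:min} they are also sufficient; so it suffices to evaluate \eqref{eq:opti} at $\hat u = 0$, where $\hat u^* = \hat u - \ud = -\ud$, and to decide when the resulting two requirements hold.

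First I would record the two elementary consequences of $1$-homogeneity that make this work. Since $\Reg(\lambda u)=\abs{\lambda}\Reg(u)$, taking $\lambda=0$ gives $\Reg(0)=0$, and taking $\lambda=-1$ gives $\Reg(-w)=\Reg(w)$ for every $w$, so the sublevel set $\set{w\in\mathcal{H}:\Reg(w)\leq 1}$ is symmetric about the origin. The symmetry immediately yields $\dualnorm{-\ud}{\Reg}=\dualnorm{\ud}{\Reg}$: substituting $w\mapsto -w$ in the supremum defining $\dualnorm{-\ud}{\Reg}$ turns $\sup\set{\inner{-\ud}{w}:\Reg(w)\leq1}$ into $\sup\set{\inner{\ud}{w}:\Reg(w)\leq1}=\dualnorm{\ud}{\Reg}$.

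Now I would specialise \eqref{eq:opti} to $\hat u=0$. The identity $\Reg(\hat u)=-\int_\Omega(\hat u-\ud)\hat u\,dx$ becomes $\Reg(0)=0=-\int_\Omega(0-\ud)\cdot 0\,dx=0$, which holds trivially by $\Reg(0)=0$; so this requirement carries no information. The remaining requirement $\dualnorm{\hat u^*}{\Reg}\leq 1$ becomes $\dualnorm{-\ud}{\Reg}\leq 1$, which by the symmetry above is equivalent to $\dualnorm{\ud}{\Reg}\leq 1$. Combining with the necessity and sufficiency of \eqref{eq:opti} for minimality gives exactly the claimed equivalence.

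As a self-contained alternative bypassing the Fenchel machinery, one can expand $\mathcal{E}(u)-\mathcal{E}(0)=\frac12\int_\Omega u^2\,dx-\int_\Omega u\ud\,dx+\Reg(u)$. For the ``if'' direction, the Cauchy--Schwarz-type bound $\int_\Omega u\ud\,dx\leq\dualnorm{\ud}{\Reg}\Reg(u)\leq\Reg(u)$ — obtained by normalising $u$ by $\Reg(u)$ when $\Reg(u)>0$, and, when $\Reg(u)=0$, by noting that $\dualnorm{\ud}{\Reg}\leq1<\infty$ forces $\ud\in\mathcal{P}^\perp$ (Lemma \ref{le:perp}), hence $\inner{\ud}{u}=0$ — shows $\mathcal{E}(u)-\mathcal{E}(0)\geq\frac12\int_\Omega u^2\,dx\geq0$. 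For the ``only if'' direction, if $\dualnorm{\ud}{\Reg}>1$ pick $u$ with $\Reg(u)\leq1$ and $\int_\Omega u\ud\,dx>1$; using $\Reg(tu)=t\Reg(u)$ one computes $\mathcal{E}(tu)-\mathcal{E}(0)=t\bigl(\tfrac t2\int_\Omega u^2\,dx-\int_\Omega u\ud\,dx+\Reg(u)\bigr)$, which is strictly negative for all sufficiently small $t>0$ because $-\int_\Omega u\ud\,dx+\Reg(u)<0$, contradicting the minimality of $0$ (the case $\int_\Omega u^2\,dx=0$ does not arise, since it would force $\int_\Omega u\ud\,dx=0$). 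I do not expect any real obstacle here; the only point that is not completely automatic is the symmetry $\dualnorm{-\ud}{\Reg}=\dualnorm{\ud}{\Reg}$, respectively the treatment of directions with $\Reg(u)=0$, and both follow at once from $1$-homogeneity together with Lemma \ref{le:perp}.
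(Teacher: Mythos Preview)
Your argument is correct. The forward implication is handled exactly as in the paper: if $0$ minimises $\mathcal{E}$ then the extremality conditions of Remark~\ref{re:dual} force $-\ud\in\dualball{\Reg}$, hence $\dualnorm{\ud}{\Reg}\leq 1$ (you spell out the symmetry $\dualnorm{-\ud}{\Reg}=\dualnorm{\ud}{\Reg}$ from $\Reg(-w)=\Reg(w)$, which the paper uses silently).

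For the backward implication you take a shorter route than the paper. You simply feed $\hat u=0$ into the sufficient criterion of Lemma~\ref{le:min}: the identity in \eqref{eq:opti} is automatic since $\Reg(0)=0$, and the ball condition is precisely $\dualnorm{\ud}{\Reg}\leq 1$. The paper instead argues by contradiction, assuming a nonzero minimiser $\um$, applying \eqref{eq:opti} to $\um$, dividing through by $\Reg(\um)$, and showing that $\um/\Reg(\um)$ would beat the defining supremum of $\dualnorm{\ud}{\Reg}$. Your approach is the more economical one once Lemma~\ref{le:min} is already on the table; the paper's contradiction argument is self-contained but slightly heavier and tacitly uses $\Reg(\um)>0$. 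Your alternative elementary proof (bounding $\inner{u}{\ud}\leq\dualnorm{\ud}{\Reg}\,\Reg(u)$ and testing $t\mapsto\mathcal{E}(tu)$ for small $t$) is also sound and nicely handles the $\Reg(u)=0$ case via Lemma~\ref{le:perp}.
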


\begin{proof}
\begin{itemize}
\item $0$ minimizes $\mathcal{E} \Rightarrow \dualnorm{\ud}{\Reg}\leq 1$: 
If $\um\equiv 0$, then $\um^*=-\ud$ and the extremality conditions from Remark \ref{re:dual} 
state that 
$\um^* \in \dualball{\Reg}$.
This means that $\dualnorm{\um^*}{\Reg}\leq 1$
and consequently $\dualnorm{\ud}{\Reg}\leq 1\;$.

\item  $ \dualnorm{\ud}{\Reg}\leq 
1\Rightarrow \um\equiv 0 $ :
We prove this implication by contradiction.
Assume therefore that
$\dualnorm{\ud}{\Reg} \leq 1$
and that $\um\not\equiv 0$ minimizes $\mathcal{E}$. 
This, in particular, means that $\Reg(\um) < +\infty$.
Then from \eqref{eq:opti} it follows that
\begin{align*}
  -\int_\Omega  \um (\um-\ud) \; dx &= \Reg(\um)
   \geq \dualnorm{\ud}{\Reg}  \Reg(\um)\;.
\end{align*}
Rearranging the terms and division by $\Reg(\um)$ shows that
\begin{equation}
\label{eq:1b}
\begin{aligned}
  & - \int_\Omega  \frac{\um^2}{\Reg(\um)}\, dx +
     \int_\Omega  \frac{\um}{\Reg(\um)}\ud  \, dx \\
\geq & \dualnorm{\ud}{\Reg} \\
 = & \sup\set{\int_\Omega   \ud \phi \, dx: \Reg(\phi) \leq 1 }\;.
\end{aligned}
\end{equation}
Since, by assumption, $0 \neq \um \in L^2(\Omega)$, we also have 
\begin{equation*}
\int_\Omega  \frac{\um^2}{\Reg(\um)}\, dx >0\;.
\end{equation*}
This, together with \eqref{eq:1b}, shows that
\begin{align*}
\sup\set{\int_\Omega   \ud \phi \, dx: \Reg(\phi) \leq 1 } 
  &\underbrace{\geq}_{\phi = \frac{\um}{\Reg(\um)}} \int_\Omega  \frac{\um}{\Reg(\um)}\ud  \, dx \\
  &>\dualnorm{\ud}{\Reg}\\ 
  &=  \sup\set{\int_\Omega   \ud \phi \, dx: \Reg(\phi) \leq 1 }\;,
\end{align*}
hence we obtain a contradiction to the assumption $\um \neq 0$.
\end{itemize}
\end{proof}

\begin{example}
\label{le:gnorm}
\begin{enumerate}
 \item Let $\Reg = \TV_{\lambda_i}^i$, then from Lemma \ref{le:null} it follows that $\tvm{i} \equiv 0$ if and only if  $\dualnorm{\ud}{\TV_{\lambda_i}^i} \leq 1$. 
 \item $\TGV$-minimization: Choose $\Reg = \TGV$, then from Lemma \ref{le:null} it follows that $\utgv \equiv 0$ if and only if 
 $\dualnorm{\ud}{\TGV} \leq 1$\;.
\end{enumerate}
These results are similar as in \cite{Mey01}, where $TV$-minimization of functions on $\Omega=\R^d$ have been considered.
\end{example}

\section{Extremal Properties and Solutions of 1D-TGV}
\label{sec:characterization}
In the following we consider the case $d=1$ and $\Omega = (-1,1)$.
We derive some characteristic properties of the minimizers 
$\utgv$ of the $\TGV$-functional $\G_{\lambda_1,\lambda_2}$, defined in \eqref{eq:tgv_min}.

Below, by some basic considerations, it is possible to identify sets of 
parameters $\vec{\lambda} = (\lambda_1,\lambda_2)$ for which $\utgv$ equals some $\tvm{i}$, $i=1,2$.

For $d=1$, the dual-norm $\dualnorm{\cdot}{\TV_{\lambda_i}^i}, i=1,2$, $\dualnorm{u^*}{\TGV}$, respectively, 
can be easily calculated via integration: To see this, let 
\begin{equation}
\begin{aligned}
 \label{eq:sigma}
 \sigma^0[u^*](x) &:= u^*(x)\,,\\
\sigma^1[u^*](x) &:=\int_{-1}^x u^*(t)\,dt\,,\\
\sigma^{i+1}[u^*](x) &:=\int_{-1}^x
\sigma^i[u^*](t)\,dt\,,  \qquad \forall u^* \in L^2(-1,1)\;.
\end{aligned}
\end{equation}

\begin{lemma}
 \label{le:hspace*}
 Let $\Omega = (-1,1)$. Then for all $i=1,2,\ldots$,
 \begin{equation}
 \label{eq:Psi}
 \begin{aligned}
 \Psi^i &:= \set{\psi \in C_c^\infty(-1,1): \sigma^j[\psi](1)=0\,,j=1,2,\ldots,i-1}\\
   &= \mathcal{H}^i \cap C_c^\infty(-1,1)\;. 
 \end{aligned}
 \end{equation}
 Moreover, 
  \begin{equation}
  \label{eq:density}
  \overline{\Psi^i} = \mathcal{H}^i\;. 
 \end{equation}
\end{lemma}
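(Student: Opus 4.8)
The plan is to establish the set identity \eqref{eq:Psi} first, and then to deduce the density statement \eqref{eq:density} from it together with Remark \ref{re:hspace*}. Fix $i\ge 1$ and $\psi\in C_c^\infty(-1,1)$, and choose $\varepsilon>0$ with $\psi\equiv 0$ on $(-1,-1+\varepsilon)\cup(1-\varepsilon,1)$. A straightforward induction on $j$, using the recursion \eqref{eq:sigma}, shows that each iterated primitive $\sigma^j[\psi]$ is smooth, vanishes identically on $(-1,-1+\varepsilon)$, and agrees with a polynomial of degree $\le j-1$ on $(1-\varepsilon,1)$ (its $j$-th derivative equals $\psi$, which vanishes there); in particular the boundary values $\sigma^j[\psi](1)$ are well defined.

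The core of the argument is the Cauchy repeated-integration formula
\[
 \sigma^j[\psi](1)=\frac{1}{(j-1)!}\int_{-1}^1 (1-t)^{j-1}\psi(t)\,dt,\qquad j=1,2,\dots,
\]
which I would prove by induction on $j$: for $j=1$ it is the definition of $\sigma^1$, and for the inductive step one writes $\sigma^{j+1}[\psi](1)=\int_{-1}^1\sigma^j[\psi](t)\,dt$, substitutes the formula for $\sigma^j[\psi](t)$, and exchanges the order of integration by Fubini. (Equivalently, integrate by parts $j-1$ times, all boundary contributions dropping because $\psi$ vanishes near $\pm 1$.) Since the polynomials $\{(1-t)^{0},(1-t)^{1},\dots,(1-t)^{i-1}\}$ arise from $\{1,t,\dots,t^{i-1}\}$ by a unipotent triangular change of basis, they span exactly $\mathcal P^i$, the polynomials of degree $\le i-1$. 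Hence the vanishing conditions $\sigma^j[\psi](1)=0$, $j=1,\dots,i$, which define $\Psi^i$, are jointly equivalent to $\int_{-1}^1 q(t)\psi(t)\,dt=0$ for every $q\in\mathcal P^i$, i.e.\ to $\int_{-1}^1 t^{j}\psi(t)\,dt=0$ for $j=0,\dots,i-1$. By the definition \eqref{Hspace*} of $\mathcal H^i$ this is precisely the assertion $\psi\in\mathcal H^i$; together with $\psi\in C_c^\infty(-1,1)$ this yields $\Psi^i=\mathcal H^i\cap C_c^\infty(-1,1)$, which is \eqref{eq:Psi}.

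For \eqref{eq:density} it remains to show that $\mathcal H^i\cap C_c^\infty(-1,1)$ is dense in $\mathcal H^i$; this is the content of Remark \ref{re:hspace*}, which I would make explicit as follows. Because $C_c^\infty(-1,1)$ is dense in $L^2(-1,1)$ and the functionals $u\mapsto\int_{-1}^1 t^{k}u\,dx$, $k=0,\dots,i-1$, are linearly independent (a nontrivial relation would say a nonzero polynomial vanishes a.e.\ on $(-1,1)$), one can fix $\eta_0,\dots,\eta_{i-1}\in C_c^\infty(-1,1)$ with $\int_{-1}^1 t^{k}\eta_l\,dx=\delta_{kl}$. Given $u\in\mathcal H^i$, take $u_n\in C_c^\infty(-1,1)$ with $u_n\to u$ in $L^2$ and set $\tilde u_n:=u_n-\sum_{k=0}^{i-1}\bigl(\int_{-1}^1 t^{k}u_n\,dx\bigr)\eta_k\in C_c^\infty(-1,1)$. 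Then $\tilde u_n$ satisfies all the moment conditions, so $\tilde u_n\in\Psi^i$ by \eqref{eq:Psi}, and since $\int_{-1}^1 t^{k}u_n\,dx\to\int_{-1}^1 t^{k}u\,dx=0$ we get $\tilde u_n\to u$ in $L^2$; hence $\overline{\Psi^i}=\mathcal H^i$.

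The only genuinely delicate step is the calculus identity for $\sigma^j[\psi](1)$ and the bookkeeping that matches the vanishing of the $j$-th primitive at the right endpoint with the vanishing of the $(j-1)$-st moment of $\psi$; I expect the induction/Fubini argument (or, equivalently, the iterated integration by parts) together with the triangular change of basis to be the part that must be written out with care, whereas the correction-term density argument in the last paragraph is entirely standard.
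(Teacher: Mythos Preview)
Your argument is correct. For \eqref{eq:Psi} you take a somewhat different route from the paper: you invoke the closed-form Cauchy repeated-integration identity $\sigma^j[\psi](1)=\tfrac{1}{(j-1)!}\int_{-1}^1(1-t)^{j-1}\psi(t)\,dt$ and then observe that $\{(1-t)^k\}_{k=0}^{i-1}$ and $\{t^k\}_{k=0}^{i-1}$ are related by a unipotent change of basis, so the collection of primitive conditions is \emph{jointly} equivalent to the moment conditions defining $\mathcal H^i$; this yields both inclusions at once. The paper instead runs a step-by-step integration by parts of the form $\sigma^j[u^*](1)=-\int_{-1}^1\sigma^{j-2}[u^*]\,x\,dx+\sigma^{j-1}[u^*](1)$, proving one inclusion explicitly and declaring the converse analogous. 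Your route is more symmetric and avoids having to unwind the nested primitives one layer at a time; the paper's route stays closer to the recursive definition \eqref{eq:sigma} but leaves more to the reader in the inductive step.

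One bookkeeping point worth flagging: you take the defining conditions for $\Psi^i$ as $\sigma^j[\psi](1)=0$ for $j=1,\dots,i$, whereas the statement as printed has $j=1,\dots,i-1$. Your range is the one that actually matches the $i$ moment conditions in $\mathcal H^i$ (with the printed range the set equality already fails for $i=1,2$), so you are in effect proving the intended statement and silently correcting an index slip.

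For \eqref{eq:density} the paper simply cites Remark \ref{re:hspace*} together with the closedness of $\mathcal H^i$ in $L^2$. Your correction-by-dual-basis argument (choose $\eta_0,\dots,\eta_{i-1}\in C_c^\infty$ with $\int_{-1}^1 t^k\eta_l\,dx=\delta_{kl}$ and subtract the moment defects from an arbitrary $C_c^\infty$ approximant) is the standard way to make that remark precise, and is complete as written.
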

\begin{proof}
Let $u^* \in \mathcal{H}^i \cap C_c^\infty(-1,1)$, then 
\begin{equation*}
 \int_{-1}^1 u^* x^j \,dx = 0\,, \qquad \forall j=0,1,\ldots i-1\;.
\end{equation*}
For fixed $i$ we prove by an inductive argument that for $u^* \in \mathcal{H}^i \cap C_c^\infty(-1,1)$ also $u^* \in \Psi^i$.
\begin{itemize}
 \item Let $j=1$: Then $u^* \in \mathcal{H}^i$ implies that $$\sigma^j[u^*](1) = \sigma^1[u^*](1) = \int_{-1}^1 u^* dx = \int_{-1}^1 u^* 1dx =  0\;.$$
 \item Let $2 \leq j \leq i-1$ and assume that $\sigma^k[u^*](1) =0$ for $k=0,1,\ldots,j-1$. 
       Then 
       \begin{equation*}
       \sigma^j[u^*](1) = \int_{-1}^1 \sigma^{j-1}[u^*] dx
                        =  - \int_{-1}^1 \sigma^{j-2}[u^*] x\,dx + \sigma^{j-1} [u^*](1)\;.
       \end{equation*} 
       The right hand side vanishes because $u^* \in \mathcal{H}^i \cap C_c^\infty(-1,1)$ and the induction assumption.
\end{itemize}
The reverse direction can be performed with an analogous induction argument.

\eqref{eq:density} follows from Remark \ref{re:hspace*} and the fact that $\mathcal{H}^i$ is closed in $L^2(-1,1)$.
\end{proof}
Using this lemma we are able to derive a characterization of the TV-seminorm via $\sigma^i$: For all $u \in L^2(-1,1)$ we have the identity:
\begin{equation*}
\begin{aligned}
  & \TV_{1}^i(u) \\
  =& \sup \set{ \int_{-1}^1 u \phi^i \,dx : \phi \in \mathcal{C}_c^\infty(-1,1)\,, \norm{\phi}_{L^\infty} \leq 1}\\
  =& \sup \left\{ \int_{-1}^1 u \phi^i \,dx : \phi^i \in \mathcal{C}_c^\infty(-1,1)\,, \phi^j(\pm 1)=0\,, j=0,1,\ldots,i-1\,, \norm{\phi}_{L^\infty} \leq 1 \right\}\\  
= & \sup \set{ \int_{-1}^1 u \psi \,dx : 
\psi \in \Psi^i \cap \mathcal{C}_c^\infty(-1,1)\,,\;
\norm{\sigma^i[\psi]}_{L^\infty} \leq 1
}\\
 \underbrace{=}_{\eqref{eq:Psi}} & \sup \set{ \int_{-1}^1 u \phi\,dx : 
\phi \in \mathcal{H}^i \cap C_c^\infty(-1,1)\,, \norm{\sigma^i(\phi)}_{L^\infty} \leq 1}\;.\\
\end{aligned}
\end{equation*}
Using \eqref{eq:density}, $\overline{\Psi^i} = \mathcal{H}^i$, and the fact that $u \to \norm{\sigma^i[u]}_{L^\infty}$ is lower semi-continuous with respect to the $L^2$-norm, it follows
that 
\begin{equation}
\label{eq:poesch}
\begin{aligned}
\TV_{\lambda_i}^i(u) &= \lambda_i \TV_{1}^i(u)\\
&= \sup \set{ \int_{-1}^1 u \phi\,dx : 
\phi \in \mathcal{H}^i\,, \norm{\sigma^i(\phi)}_{L^\infty} \leq \lambda_i}\;.
\end{aligned}
\end{equation}
From \cite[Theorem 5.1]{PoeSch08} we then get an equivalent characterization of $\TV_{\lambda_i}^i$:
\begin{equation}
\label{eq:poesch_one}
TV_1^i(u)=  \sup \set{ 
\int_{-1}^1 u \psi \, dx : 
\psi \in \mathcal{H}^i\,, \;
\dualnorm{\psi}{TV_1^i} \leq 1}\;.
\end{equation}
In an analogous ways we can rewrite the $\TGV$-functional
\begin{equation}
\label{eq:tgv_tv1_tv2}
\begin{aligned}
~ & \TGV(u)\\
= &
\sup \set{\int_{-1}^1 u \phi'' \, dx: \phi \in \mathcal{C}_c^\infty(-1,1), 
\norm{\phi'}_{L^\infty}\leq \lambda_1, 
\norm{\phi}_{L^\infty} \leq \lambda_2
}\\
=& \sup \left\{ \int_{-1}^1 u \phi \, dx: \phi \in \mathcal{H}^2 ,
\norm{\sigma^1\lre{\phi}}_{L^\infty}\leq \lambda_1, \norm{\sigma^2\lre{\phi}}_{L^\infty}\leq \lambda_2 \right\}\;.
\end{aligned}
\end{equation}

\begin{lemma}\label{le:hilfslemma}
Let $d=1$ and $\Omega = (-1,1)$. 
\begin{itemize}
\item For $u \in \mathcal{H}^2$ and $i=1,2$, we have
\begin{equation*}
\TGV(u)\leq \TV_{\lambda_i}^i(u)\;
\end{equation*}
\item For $u^* \in \mathcal{H}^2$ and $i=1,2$, we have
\begin{equation}
\label{eq:abschaetzung}
\begin{aligned}
\dualnorm{u^*}{ \TV_{\lambda_i}^i}
& \leq \norm{\sigma^i[u^*]}_{L^\infty}\,,
\dualnorm{u^*}{ \TV_{\lambda_i}^i}
&\leq 
\dualnorm{u^*}{ \TGV}\;.
\end{aligned}
\end{equation}
\end{itemize}
As a consequence 
\begin{equation}
\label{eq:TGVball}
\mathcal{B}^*_{\TGV} \subset
\mathcal{B}^*_{\TV_{\lambda_1}^1} \cap 
\mathcal{B}^*_{\TV_{\lambda_2}^2}\;.
\end{equation}

On the other hand, if $u^*$ satisfies 
\begin{equation}
 \label{eq:big_hundianer}
\norm{\sigma^i(u^*)}_{L^\infty} \leq \lambda_i,\; i=1,2, \text{ then } 
u^* \in \mathcal{B}^*_{\TGV}.
\end{equation}

Moreover,
\begin{equation}
  \TGV(u) = \sup \set{\int_\Omega u \psi\,dx : 
\psi \in \mathcal{H}^2:
\dualnorm{\psi}{\TGV} \leq 1}\;.
\end{equation}
\end{lemma}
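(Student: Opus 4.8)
The plan is to prove the statement in Lemma~\ref{le:hilfslemma} one bullet at a time, using the two supremum representations established just before the lemma, namely the definition of $\TGV$ via test functions $\phi$ with $\norm{\sigma^1[\phi]}_{L^\infty}\leq\lambda_1$, $\norm{\sigma^2[\phi]}_{L^\infty}\leq\lambda_2$ (see \eqref{eq:tgv_tv1_tv2}) and the analogous representation \eqref{eq:poesch} of $\TV_{\lambda_i}^i$ via test functions $\phi\in\mathcal{H}^i$ with $\norm{\sigma^i[\phi]}_{L^\infty}\leq\lambda_i$.

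For the first bullet ($\TGV(u)\leq\TV_{\lambda_i}^i(u)$ on $\mathcal{H}^2$): I would observe that every admissible test function for the $\TGV$-supremum is also admissible for the $\TV_{\lambda_i}^i$-supremum. Indeed, if $\phi\in\mathcal{H}^2$ satisfies the two constraints $\norm{\sigma^1[\phi]}_{L^\infty}\leq\lambda_1$ and $\norm{\sigma^2[\phi]}_{L^\infty}\leq\lambda_2$, then for $i=2$ the constraint $\norm{\sigma^2[\phi]}_{L^\infty}\leq\lambda_2$ is exactly the $\TV_{\lambda_2}^2$-constraint; and for $i=1$ the constraint $\norm{\sigma^1[\phi]}_{L^\infty}\leq\lambda_1$ is exactly the $\TV_{\lambda_1}^1$-constraint — here one also needs $\phi\in\mathcal{H}^1$, which follows from $\phi\in\mathcal{H}^2\subset\mathcal{H}^1$. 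Hence the $\TGV$-supremum is over a smaller feasible set and the inequality follows. For the dual-norm inequalities \eqref{eq:abschaetzung}: the bound $\dualnorm{u^*}{\TV_{\lambda_i}^i}\leq\norm{\sigma^i[u^*]}_{L^\infty}$ comes from integrating by parts — for $u$ with $\TV_{\lambda_i}^i(u)\leq 1$, approximating $u$ appropriately and using \eqref{eq:poesch}, one gets $\inner{u^*}{u}=\pm\int\sigma^i[u^*]\,\phi^{(i)}$-type estimate bounded by $\norm{\sigma^i[u^*]}_{L^\infty}\cdot\TV^i(u)\cdot(\text{scaling})$; the clean way is actually to use that $\TV_{\lambda_i}^i(u)\leq 1$ with the representation \eqref{eq:poesch} to test against a function whose $i$-th primitive is bounded. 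The second inequality $\dualnorm{u^*}{\TV_{\lambda_i}^i}\leq\dualnorm{u^*}{\TGV}$ is immediate from the first bullet: $\TGV(u)\leq\TV_{\lambda_i}^i(u)$ means $\set{\Reg=\TGV,\ \Reg(u)\leq 1}\supseteq\set{\TV_{\lambda_i}^i(u)\leq 1}$, so taking the supremum of $\inner{u^*}{u}$ over the larger set gives the larger dual number. The inclusion \eqref{eq:TGVball} is then just the set-theoretic restatement: $\dualnorm{u^*}{\TGV}\leq 1$ forces $\dualnorm{u^*}{\TV_{\lambda_i}^i}\leq 1$ for both $i$.

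For the converse implication \eqref{eq:big_hundianer}: if $\norm{\sigma^i[u^*]}_{L^\infty}\leq\lambda_i$ for $i=1,2$, I want to show $\dualnorm{u^*}{\TGV}\leq 1$, i.e.\ $\inner{u^*}{u}\leq\TGV(u)$ for every $u$ (with $\TGV(u)\leq 1$, but by homogeneity it suffices to check the inequality $\inner{u^*}{u}\le\TGV(u)$ for all $u\in\mathcal{H}^2$). This is the dual side and the argument should go via the very definition of $\TGV$ as a supremum over $\phi$: one shows that $u^*$ itself is (a limit of) an admissible competitor, or rather uses that the condition on the primitives $\sigma^i[u^*]$ places $u^*$ in the weak-* closed convex set appearing in \eqref{eq:tgv_tv1_tv2}. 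Concretely I would argue by duality/bipolar: the constraint set $\set{\psi\in\mathcal{H}^2:\norm{\sigma^1[\psi]}_{L^\infty}\leq\lambda_1,\ \norm{\sigma^2[\psi]}_{L^\infty}\leq\lambda_2}$ is convex, closed, balanced; $\TGV$ is its support function restricted to $\mathcal{H}^2$; hence its polar (the unit ball $\dualball{\TGV}$) is exactly the closed convex hull of that constraint set (bipolar theorem on $\mathcal{H}^2$), and $u^*$ satisfying $\norm{\sigma^i[u^*]}_{L^\infty}\leq\lambda_i$ lies in that very set. The last display then follows by combining \eqref{eq:poesch_one}-style reasoning: \eqref{eq:big_hundianer} together with \eqref{eq:TGVball} shows the unit ball $\dualball{\TGV}$ is precisely $\set{u^*\in\mathcal{H}^2:\norm{\sigma^i[u^*]}_{L^\infty}\leq\lambda_i,\ i=1,2}$, and re-expressing the $\TGV$-supremum \eqref{eq:tgv_tv1_tv2} over this ball gives $\TGV(u)=\sup\set{\inner{u}{\psi}:\psi\in\mathcal{H}^2,\ \dualnorm{\psi}{\TGV}\leq 1}$.

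The main obstacle I expect is the bipolar/closedness step for \eqref{eq:big_hundianer}: one must be careful that the supremum defining $\TGV$ is taken over $C_c^\infty$ (or over $\mathcal{H}^2$) and that the constraint set involving the $L^\infty$-norms of the primitives $\sigma^i[\psi]$ is closed in the relevant topology, so that the bipolar theorem applies and no spurious gap appears between $\dualball{\TGV}$ and the set in \eqref{eq:big_hundianer}. The density statement \eqref{eq:density} and the lower semicontinuity of $\psi\mapsto\norm{\sigma^i[\psi]}_{L^\infty}$ with respect to the $L^2$-norm — both already recorded in the excerpt just before the lemma — are exactly what is needed to push the supremum from $C_c^\infty$ to $\mathcal{H}^2$ and to identify the closure; I would lean on those. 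The integration-by-parts estimate for $\dualnorm{u^*}{\TV_{\lambda_i}^i}\leq\norm{\sigma^i[u^*]}_{L^\infty}$ is routine given \eqref{eq:poesch}, and the remaining inequalities are purely formal consequences of monotonicity of suprema, so those parts should be short.
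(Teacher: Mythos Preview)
Your proposal is correct and mostly parallels the paper's proof, with two differences worth noting. For the first bullet with $i=1$, your argument via the set inclusion $\mathcal{H}^2\subset\mathcal{H}^1$ (so that the $\TGV$ feasible set in \eqref{eq:tgv_tv1_tv2} is contained in the $\TV_{\lambda_1}^1$ feasible set in \eqref{eq:poesch}) is in fact cleaner than the paper's route, which instead first shows that for $u\in\mathcal{H}^2$ the supremum over $\phi\in\mathcal{H}^1$ with the $\sigma^1$-constraint equals the supremum over $\phi\in\mathcal{H}^2$ with the same constraint (using orthogonality of $u$ to linear polynomials), and only then drops the $\sigma^2$-constraint. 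For \eqref{eq:big_hundianer}, however, the paper gives a direct three-line verification rather than invoking the bipolar theorem: if $\norm{\sigma^i[u^*]}_{L^\infty}\leq\lambda_i$ for $i=1,2$, then $u^*$ is itself an admissible test function in the supremum \eqref{eq:tgv_tv1_tv2} defining $\TGV$, so for every $u$ with $\TGV(u)\leq 1$ one has $\int_{-1}^1 u\,u^*\,dx\leq\TGV(u)\leq 1$, whence $\dualnorm{u^*}{\TGV}\leq 1$. This elementary route sidesteps precisely the closedness/density issues you flag as the main obstacle; your bipolar argument also works (and indeed yields the identification $\dualball{\TGV}=\set{\psi\in\mathcal{H}^2:\norm{\sigma^i[\psi]}_{L^\infty}\leq\lambda_i,\ i=1,2}$ used for the final display), but is heavier machinery than needed for the bare implication \eqref{eq:big_hundianer}.
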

\begin{proof}
First, we note that for every $\rho \in \mathcal{H}^2$ 
\begin{equation}
 \label{eq:polynom}
 \int_\Omega \rho p \; dx =0\,, \qquad \forall p \in \mathcal{P}^1\;.
\end{equation}
\begin{itemize}
\item From \eqref{eq:tgv_tv1_tv2} it follows that 
\begin{equation*}
\begin{aligned}
& \TGV(u) \\
= & 
\sup \left\{ \int_{-1}^1 u \phi \, dx: \phi \in \mathcal{H}^2 , \norm{\sigma^1\lre{\phi}}_{L^\infty}\leq \lambda_1, \norm{\sigma^2\lre{\phi}}_{L^\infty}\leq \lambda_2 \right\} \\
\leq& \sup \left\{ \int_{-1}^1 u \phi \, dx: \phi \in \mathcal{H}^2 ,\norm{\sigma^2\lre{\phi}}_{L^\infty}\leq \lambda_2 \right\}\\
=& \TV_{\lambda_2}^2(u)
\,,\qquad \forall u \in \mathcal{H}^2\;.
\end{aligned}
\end{equation*}
Moreover, from \eqref{eq:poesch_one} and \eqref{eq:polynom} it follows for all $u \in \mathcal{H}^2$ that 
\begin{equation*}
\begin{aligned}
& \sup \left\{ \int_{-1}^1 u \phi \, dx: \phi \in \mathcal{H}^1 , \norm{\sigma^1\lre{\phi}}_{L^\infty}\leq \lambda_1\right\} \\
= & \sup \left\{ \int_{-1}^1 u \phi \, dx: \phi \in \mathcal{H}^1 \cap \mathcal{P}^2{}^\perp ,\norm{\sigma^1\lre{\phi}}_{L^\infty}\leq \lambda_1 \right\}\\
=& \sup \left\{ \int_{-1}^1 u \phi \, dx: \phi \in \mathcal{H}^2,\norm{\sigma^1\lre{\phi}}_{L^\infty}\leq \lambda_1 \right\}\\
=& \TV_{\lambda_1}^1(u)
\,,\qquad \forall u \in \mathcal{H}^2\;.
\end{aligned}
\end{equation*}
Thus 
\begin{equation*}
\TGV(u) \leq \TV_{\lambda_1}^1(u)
\,,\qquad \forall u \in \mathcal{H}^2\;.
\end{equation*}
\item
Because $\TV_{\lambda_i}^i$ and $\TGV$ are lower semi-continuous on $L^2(-1,1)$ it follows that 
\begin{equation}\label{eq:sets}
\begin{aligned}
&\set{u \in  \mathcal{H}^i: \TV_{\lambda_i}^i(u) \leq 1}\\
= & \overline{\set{u \in  \mathcal{C}_c^\infty(-1,1)\cap \mathcal{H}^i: \TV_{\lambda_i}^i(u) \leq 1}} \\
\subset &
\overline{\set{u \in  \mathcal{C}_c^\infty(-1,1)\cap \mathcal{H}^i: \TGV(u) \leq 1}} \\
= & \set{u \in \mathcal{H}^i: \TGV(u) \leq 1}\,,\qquad \forall i=1,2.
\end{aligned}
\end{equation}
Therefore, from \eqref{eq:polynom} it follows that
\begin{align*}
& \dualnorm{u^*}{ \TV_{\lambda_i}^i}\\
= & \sup \set{\int_{\Omega} u^*u \; dx : u\in \mathcal{H}^i\,, \TV_{\lambda_i}^i(u)\leq 1}\\
= &
\sup \set{\int_{\Omega} u^*u \; dx : u\in \mathcal{H}^2\,, \TV_{\lambda_i}^i(u)\leq 1}\,,\qquad \forall u^* \in \mathcal{H}^2\;.
\end{align*}
This, together with \eqref{eq:sets}
implies that
\begin{equation}
\begin{aligned}\label{eq:qqq}
\dualnorm{u^*}{ \TV_{\lambda_i}^i}
&= \sup \set{\int_{\Omega} u^*u \; dx : u\in \mathcal{H}^2 \,, \TV_{\lambda_i}^i(u)\leq 1}\\
&\leq \sup
\set{\int_{\Omega} u^* u \; dx :  
u\in \mathcal{H}^2, \TGV\lr{u}\leq 1} \\
&=\dualnorm{u^*}{\TGV}\,, \qquad \forall u^* \in \mathcal{H}^2\;.
\end{aligned}
\end{equation}
The definition of the $*$-number shows that 
$$\dualnorm{u^*}{\TV_1^i} = \sup \set{\int_{-1}^1 u^* u\,dx: u \in \mathcal{H}^i : \TV_1^i(u)=1}\;.$$
For all $u\in \mathcal{H}^i$ satisfying $\TV_1^i(u)=1$ we have
\begin{equation*}
 \begin{aligned}
1= \TV_1^i (u)  
& = \sup \set{ \int_{-1}^1 u \phi^*\,dx\;:\;\phi^* \in \mathcal{H}^i\,,\; \norm{\sigma^i(\phi^*)}_{L^\infty} \leq 1}\\
& = \sup \set{ \int_{-1}^1 u \frac{\phi^*}{\norm{\sigma^i(\phi^*)}_{L^\infty}}\,dx\;:\;\phi^* \in \mathcal{H}^i}\,,\;.
 \end{aligned}
\end{equation*}
Choosing $\phi^*=u^*$ then gives 
\begin{equation*}
\int_{-1}^1 u u^*\,dx \leq \norm{\sigma^i(u^*)}_{L^\infty}\,, \qquad \forall u \in \mathcal{H}^i \text{ with }\TV_1^i(u)=1\;.
\end{equation*}
This shows that 
\begin{equation*}
\dualnorm{u^*}{\TV_1^i} \leq \norm{\sigma^i(u^*)}_{L^\infty}\,, \qquad \forall u^* \in \mathcal{H}^i\;.
\end{equation*}

\item
To prove \eqref{eq:big_hundianer} we use the definition of the $*$-norm:
\begin{equation*}
 \begin{aligned}
  & \dualnorm{u^*}{\TGV} \\
  = &\sup \set{ \int_{-1}^1 u^*u\,dx\;: 
  u \in \mathcal{H}^2, 
  \TGV(u) \leq 1}\,,\\
  & \TGV(u) \\
  = & \sup \set{\int_\Omega u \phi''\,dx :  \phi \in \mathcal{C}^\infty_c(-1,1),\norm{\phi'}_{L^\infty} \leq \lambda_1\,, 
  \norm{\phi''}_{L^\infty} \leq \lambda_2}\;.
 \end{aligned}
\end{equation*}
The function $\phi = \sigma^2[u^*]$ satisfies 
\begin{itemize}
\item $u^* = \phi''$,
\item and by assumption $$\norm{\phi'}_{L^\infty}  = 
\norm{\sigma^1(u^*)}_{L^\infty} \leq \lambda_1\,, \norm{\phi}_{L^\infty} = \norm{\sigma^2(u^*)}_{L^\infty} \leq \lambda_2\,,$$
\end{itemize}
If $\TGV(u) \leq 1$, it then follows that 
\begin{equation*}
  \int_{-1}^1 u u^* \; dx\leq \TGV(u) \leq 1\;.
\end{equation*}
Taking the supremum with respect to $u$ then shows that $\dualnorm{u^*}{\TGV} \leq 1$.

On the other hand according to \eqref{eq:qqq} $u^* \in \mathcal{B}^*_{\TGV}$ implies that
$$\max \set{ 
\dualnorm{u^*}{\TV^1_{\lambda_1} },
\dualnorm{u^*}{\TV^2_{\lambda_2} }
} \leq 1\;.$$ 
Because 
$$\dualnorm{u^*}{\TV^i_{\lambda_i}} = \frac{1}{\lambda_i} \norm{\sigma^i\lre{u^*}}_{L^\infty}, \qquad \forall u^* \in \mathcal{B}^*_{\TGV}\,,$$
it then follows that 
$$\norm{\sigma^i\lre{u^*}}_{L^\infty} \leq \lambda_i, \qquad \forall u^* \in \mathcal{B}^*_{\TGV}\,, \forall i=1,2\,.$$
\item 
As a consequence
\begin{align*}
~ &
  \TGV(u) \\
  =& \sup \set{\int_\Omega u \phi''\,dx : 
  \phi \in \mathcal{C}^\infty_c(\Omega), 
  \norm{\phi'}_{L^\infty} \leq \lambda_1\,, \norm{\phi''}_{L^\infty} \leq \lambda_2}\\
 =& \sup \set{\int_\Omega u \psi\,dx : 
\psi \in \mathcal{H}^2\,,
\dualnorm{\psi}{\TGV} \leq 1}\;.
\end{align*}
\end{itemize}
\end{proof}

\begin{lemma}
\label{le:regions}
For 
\begin{equation}
\label{eq:a}
\lambda_2 \geq
\norm{\sigma^2[\tvmast{1}]}_{L^\infty}\,, 
\end{equation}
we have $\tvm{1} = \utgv$ and $\TGV(\tvm{1})= \TV_{\lambda_1}^1(\tvm{1})$. 
On the other hand, if 
\begin{equation}
\label{eq:b}
\lambda_1 \geq  
\norm{\sigma^1\lre{\tvmast{2}}}_{L^\infty}\;,
\end{equation}
then $\tvm{2} = \utgv$ and $\TGV\lr{\tvm{2}}=
\TV^2_{\lambda_2 }\lr{\tvm{2}}$. 
\end{lemma}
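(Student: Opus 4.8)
The plan is to check, for $u=\tvm{1}$ under \eqref{eq:a} and for $u=\tvm{2}$ under \eqref{eq:b}, the two optimality conditions for the $\TGV$-functional furnished by Lemma~\ref{le:min}, namely $u-\ud\in\dualball{\TGV}$ and $-\int_{-1}^1(u-\ud)u\,dx=\TGV(u)$. Since $\G$ is strictly convex (its quadratic fidelity term is), its minimizer is unique, so verifying these conditions forces $u=\utgv$; and the required identity $\TGV(u)=\TV^i_{\lambda_i}(u)$ will drop out of the verification of the second condition.

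First I would treat case \eqref{eq:a}. The $L^2$--$\TV$ optimality relations \eqref{eq:optiTV} with $i=1$ give $\dualnorm{\tvmast{1}}{\TV^1_{\lambda_1}}\le1$ and $\TV^1_{\lambda_1}(\tvm{1})=-\int_{-1}^1(\tvm{1}-\ud)\tvm{1}\,dx$. Reading the one-dimensional formula \eqref{eq:poesch} as the \emph{identity} $\dualnorm{u^*}{\TV^1_{\lambda_1}}=\tfrac1{\lambda_1}\norm{\sigma^1[u^*]}_{L^\infty}$ on $\mathcal H^1$ (cf.\ \cite{PoeSch08}), the first relation becomes $\norm{\sigma^1[\tvmast{1}]}_{L^\infty}\le\lambda_1$. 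Together with the hypothesis \eqref{eq:a}, $\norm{\sigma^2[\tvmast{1}]}_{L^\infty}\le\lambda_2$, we are exactly in the setting of \eqref{eq:big_hundianer}, which yields $\tvmast{1}=\tvm{1}-\ud\in\dualball{\TGV}$; this already is the first optimality condition, and it also forces $\tvmast{1}\in\mathcal H^2$ (Lemma~\ref{re:invarianz}), hence $\tvm{1}\in\mathcal H^2$.

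For the second condition I would sandwich $\TGV(\tvm{1})$ between $\TV^1_{\lambda_1}(\tvm{1})$ and itself. The upper bound $\TGV(\tvm{1})\le\TV^1_{\lambda_1}(\tvm{1})$ is the first assertion of Lemma~\ref{le:hilfslemma}, applicable because $\tvm{1}\in\mathcal H^2$. For the lower bound, $\tvmast{1}\in\dualball{\TGV}$ together with $1$-homogeneity of $\TGV$ gives $\int_{-1}^1\tvmast{1}w\,dx\le\TGV(w)$ for all $w\in L^2(-1,1)$; taking $w=-\tvm{1}$ and using $\TGV(-\tvm{1})=\TGV(\tvm{1})$ yields $\TGV(\tvm{1})\ge-\int_{-1}^1\tvmast{1}\tvm{1}\,dx=\TV^1_{\lambda_1}(\tvm{1})$, the last equality by \eqref{eq:optiTV}. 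Hence $\TGV(\tvm{1})=\TV^1_{\lambda_1}(\tvm{1})=-\int_{-1}^1(\tvm{1}-\ud)\tvm{1}\,dx$, which is both the second optimality condition and the claimed identity, so $\tvm{1}=\utgv$. Case \eqref{eq:b} is entirely symmetric under exchanging the indices $1$ and $2$: now \eqref{eq:optiTV} with $i=2$ gives $\norm{\sigma^2[\tvmast{2}]}_{L^\infty}\le\lambda_2$, \eqref{eq:b} gives $\norm{\sigma^1[\tvmast{2}]}_{L^\infty}\le\lambda_1$, \eqref{eq:big_hundianer} gives $\tvmast{2}\in\dualball{\TGV}$ (here $\tvm{2}\in\mathcal H^2$ is already known from $\ud\in\mathcal H^2$, Remark~\ref{re:dual}), and the same sandwich with $\TV^2_{\lambda_2}$ in place of $\TV^1_{\lambda_1}$ finishes it.

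The step I expect to be the main obstacle is the first one, upgrading the scalar bound $\dualnorm{\tvmast{i}}{\TV^i_{\lambda_i}}\le1$ to the \emph{pointwise} bound $\norm{\sigma^i[\tvmast{i}]}_{L^\infty}\le\lambda_i$: Lemma~\ref{le:hilfslemma} records only the inequality $\dualnorm{u^*}{\TV^i_{\lambda_i}}\le\norm{\sigma^i[u^*]}_{L^\infty}$, whereas here one needs the reverse, i.e.\ the exact evaluation of the $\TV^i$-dualnorm in terms of $\sigma^i$ that \eqref{eq:poesch}/\cite{PoeSch08} provide on $\mathcal H^i$; and one must check that the relevant dual variables $\tvmast{i}$ actually lie in $\mathcal H^2$, so that \eqref{eq:big_hundianer} and the $\mathcal H^2$-part of Lemma~\ref{le:hilfslemma} are legitimately applicable. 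The rest is bookkeeping with the dual characterizations already established.
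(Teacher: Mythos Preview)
Your proposal is correct and follows essentially the same route as the paper's proof: both verify the two conditions of Lemma~\ref{le:min} by first obtaining $\tvmast{i}\in\dualball{\TGV}$ from \eqref{eq:big_hundianer} (combining the hypothesis on one $\sigma$-bound with the other $\sigma$-bound coming from \cite{PoeSch08}), and then establishing the energy identity by the same sandwich, with the upper bound from Lemma~\ref{le:hilfslemma} and the lower bound from $\TGV^*(\tvmast{i})=0$ tested against $-\tvm{i}$. Two small remarks: your inference ``$\tvmast{1}\in\mathcal H^2$, hence $\tvm{1}\in\mathcal H^2$'' tacitly uses $\ud\in\mathcal H^2$ (the paper makes the same tacit assumption when invoking Lemma~\ref{le:hilfslemma}); and the dual-norm identity you need is not literally \eqref{eq:poesch} but the companion statement $\dualnorm{u^*}{\TV^i_{\lambda_i}}=\tfrac{1}{\lambda_i}\norm{\sigma^i[u^*]}_{L^\infty}$ on $\mathcal H^i$, which the paper also imports from \cite{PoeSch08}---you correctly flag this as the only nontrivial ingredient.
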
 

\begin{proof}
We only prove the first assertion. The proof of the second assertion is 
analogous, and therefore omitted.

We summarize two properties of $\tvm{1}$:
\begin{itemize}
\item By assumption $\norm{\sigma^2[\tvmast{1}]}_{L^\infty} \leq \lambda_2$.
\item Since $\tvm{1}$ minimizes \eqref{eq:l2TV} it follows from \cite{PoeSch08} that
       $\sigma_1[\tvmast{1}] \leq \lambda_1$. 
\end{itemize}
From \eqref{eq:big_hundianer} it then follows that
\begin{equation}
 \label{eq:rest2}
 \tvmast{1} \in \mathcal{B}^*_{\TGV}\;.
\end{equation}
Because $\tvm{1}$ is the minimizer of the TV-functional $\F^1$ it follows 
from Lemma \ref{le:min} and \eqref{eq:poesch} that 
\begin{equation}\label{eq:A1}
\begin{aligned}
~ & -\int_{-1}^1 (\tvm{1}- \ud)\tvm{1}\; dx \\
= & \TV_{\lambda_1}^1(\tvm{1}) \\
= & \sup \set{\int_{-1}^1 \tvm{1}v^*\,dx : v^*\in \mathcal{H}^1 \cap \mathcal{C}_c^\infty(-1,1)\,, \norm{\sigma^1(v^*)}_{L^\infty}\leq \lambda_1}\;.
\end{aligned}
\end{equation}
Moreover, since $\tvmast{1} \in \mathcal{B}^*_{\TGV}$, we have 
\begin{equation*}
 0 = \TGV^*(\tvmast{1}) = \sup_{u \in L^2(-1,1)} \set{ \int_{-1}^1 \tvmast{1}u\,dx\; - \TGV(u)}\,,
\end{equation*}
and in particular for the test function $u=-\tvm{1}$,
$$ -\int_{-1}^1 (\tvm{1} -\ud)\tvm{1}\,dx = -\int_{-1}^1 \tvmast{1} \tvm{1}\,dx\; \leq \TGV(-\tvm{1}) = \TGV(\tvm{1})\;.$$
This, together with Lemma \ref{le:hilfslemma} and \eqref{eq:A1} shows that
\begin{equation}\label{eq:B1}
-\int_{-1}^1 (\tvm{1}- u)\tvm{1}\; dx  \leq \TGV(\tvm{1}) \leq \TV_{\lambda_1}^1(\tvm{1}) = -\int (\tvm{1}- u)\tvm{1}\; dx\,,
\end{equation}
and therefore in particular 
\begin{equation}\label{eq:rest1}
-\int_{-1}^1 (\tvm{1}- u)\tvm{1}\; dx  = \TGV(\tvm{1})\;.
\end{equation}
Applying Lemma \ref{le:min} with \eqref{eq:rest2} and \eqref{eq:rest1} 
shows that 
$\tvm{1}$ also minimizes the \newline {$\TGV$-regularization} \eqref{eq:l2tgv}.
\end{proof}
\begin{definition}\label{de:Lambda}
We define  
\begin{equation*}
\Lambda:=\set{
(\lambda_1,\lambda_2): 
1 < \norm{\sigma^1\lr{\tvmast{2} }}_{L^\infty}
\text{ and }
1 < \norm{\sigma^2\lr{\tvmast{1} }}_{L^\infty}
}\;.
\end{equation*}
\end{definition}

\begin{corollary}
Let $(\lambda_1,\lambda_2) \in 
\left[
\norm{\sigma^1\lr{\ud}}_{L^\infty}
+\infty \right)
\times \left[
\norm{\sigma^2\lr{\ud}}_{L^\infty},
+\infty \right)$, then $\utgv\equiv 0$.
\end{corollary}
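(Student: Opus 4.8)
The plan is to read this off from the characterization of the trivial minimizer. Since $\TGV$ is $1$-homogeneous, Lemma~\ref{le:null} (equivalently the $\TGV$-case of Example~\ref{le:gnorm}) states that $\utgv \equiv 0$ if and only if $\dualnorm{\ud}{\TGV} \leq 1$, i.e.\ $\ud \in \dualball{\TGV}$. Hence the entire task reduces to checking that the stated range of parameters $(\lambda_1,\lambda_2)$ places $\ud$ inside the $\TGV$-dual ball.

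For this I would apply the sufficient condition \eqref{eq:big_hundianer} of Lemma~\ref{le:hilfslemma} with $u^* = \ud$. The hypothesis of the corollary, namely $\lambda_1 \geq \norm{\sigma^1[\ud]}_{L^\infty}$ and $\lambda_2 \geq \norm{\sigma^2[\ud]}_{L^\infty}$, is exactly the assumption $\norm{\sigma^i[\ud]}_{L^\infty} \leq \lambda_i$ for $i=1,2$ under which \eqref{eq:big_hundianer} yields $\ud \in \dualball{\TGV}$. Note that $\sigma^1[\ud]$ and $\sigma^2[\ud]$ are continuous, hence bounded, on $[-1,1]$ for every $\ud \in L^2(-1,1)$, so the hypothesis is meaningful and \eqref{eq:big_hundianer} applies verbatim. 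Combining this with the previous paragraph gives $\utgv \equiv 0$.

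I do not expect any genuine obstacle: the corollary is simply the composition of Lemma~\ref{le:null} with the membership criterion \eqref{eq:big_hundianer}. The one point worth a remark is that, as in Remark~\ref{re:dual}, one works under the standing assumption $\ud \in \mathcal{H}^2$ (otherwise $\dualnorm{\ud}{\TGV} = +\infty$ by Lemma~\ref{re:invarianz}); this is harmless here, since the conclusion $\ud \in \dualball{\TGV}$ delivered by \eqref{eq:big_hundianer} is consistent with, and in fact entails, $\ud \in \mathcal{H}^2$ via Lemma~\ref{re:invarianz}.
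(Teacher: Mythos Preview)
Your proof is correct and in fact more direct than the paper's. The paper argues in two steps: first, from $\lambda_i \geq \norm{\sigma^i[\ud]}_{L^\infty} \geq \dualnorm{\ud}{\TV_1^i}$ (using \eqref{eq:abschaetzung}) and the $\TV$-case of Lemma~\ref{le:null}/Example~\ref{le:gnorm}, it concludes that both $\TV$-minimizers vanish, $\tvm{1}=\tvm{2}\equiv 0$; second, since then $\tvmast{1}=-\ud$, the hypothesis $\lambda_2 \geq \norm{\sigma^2[\ud]}_{L^\infty}$ becomes precisely condition \eqref{eq:a} of Lemma~\ref{le:regions}, whence $\utgv=\tvm{1}=0$. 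Your route---combining the $\TGV$-instance of Lemma~\ref{le:null} directly with the membership criterion \eqref{eq:big_hundianer}---bypasses the detour through the $\TV^i$-minimizers and Lemma~\ref{le:regions} entirely. The paper's approach has the mild expository advantage of reinforcing the section's theme (parameter regions where $\TGV$ collapses to $\TV^i$), but yours is the shorter argument for this particular statement.
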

\begin{proof}
Because of \eqref{eq:ast_tv} and \eqref{eq:abschaetzung} we have
\begin{equation*}
 \lambda_1 \geq 
 \norm{\sigma^1(\ud)}_{L^\infty}\geq
 \dualnorm{\ud}{\TV_1^1} \text{ and }
 \lambda_2 \geq 
  \norm{\sigma^2(\ud)}_{L^\infty}\geq
  \dualnorm{\ud}{\TV_1^2} \;.
\end{equation*}
Therefore, from Example \ref{le:gnorm} and 
Lemma \ref{le:null} it follows that $\tvm{2} = \tvm{1} \equiv 0$.

Using Lemma \ref{le:regions} it follows that $\tvm{2}=\tvm{1}=\utgv$, and therefore the assertion.
\end{proof}

\begin{lemma}\label{le:charStern}
Let $\utgv^*$ be the minimizer of $u^* \to \Sim^*(u^*) + \TGV^*(-u^*)$.
\begin{enumerate}
\item \label{it:poly}  Then, on each connected component of 
\begin{equation*}
B:=\set{x: 
\abs{\sigma^1[\utgv^*](x)}<\lambda_1 \text{ and }
\abs{\sigma^2[\utgv^*](x)}<\lambda_2}\,,
\end{equation*}
$\utgv(x) \mid_{B}$ is a polynomial of maximal degree $1$. 

\item If there exists an interval $A$ such that either 
$\abs{\sigma^1[\utgv^*](x)}=\lambda_1$ for all $x\in A$ or 
$\abs{\sigma^2[\utgv^*](x)}=\lambda_2$ for all $x\in A$, 
then $\utgv=\ud$ on $A$.

\item \label{item:jumping} \textbf{Jump Condition:} 
If there exists $x_0 \in (-1,1)$ and $\epsilon > 0$ such that 
\begin{equation}
 \label{eq:s1}
 \begin{aligned}
  \sigma^1[\utgv^*](x_0) &= \lambda_1 \text{ and }\\
  \abs{ \sigma^1[\utgv^*](x)} &< \lambda_1\,, \quad \forall x \in
(x_0-\epsilon,x_0+\epsilon) \backslash \set{x_0}\;
 \end{aligned}
\end{equation}
then there exist constants $c_1$, and $d_1 \leq d_2$ such that
\begin{equation}
 \label{eq:jump}
 \utgv(x) =
\begin{cases}
c_1 x + d_1 &x \in (x_0-\epsilon,x_0)\,,\\
c_1 x + d_2 &x \in (x_0,x_0+\epsilon)\;.
\end{cases}
\end{equation}
If instead of the first condition in \eqref{eq:s1}, $\sigma^1[\utgv^*](x_0)=-\lambda_1$ holds, then 
$\utgv$ satisfies \eqref{eq:jump}, but $d_2 \leq d_1$.
\item \label{item:bending} 
\textbf{Bending Condition:} If there exists $x_0 \in 
(-1,1)$ and some $\epsilon>0$ such that 
\begin{equation}
 \label{eq:s2}
 \begin{aligned}
 \sigma^2[ \utgv^*](x_0) &= \lambda_2\,,\\
 \abs{\sigma^2[\utgv^*] (x)} &<\lambda_2\,,\quad \forall x \in (x_0-\epsilon,x_0+\epsilon) \backslash \set{x_0}\;.
 \end{aligned}
 \end{equation}
 Then 
 \begin{equation}\label{eq:bend}
 \utgv(x) =
\begin{cases}
c_1 x + d_1 &x \in (x_0-\epsilon,x_0)\\
c_2 x + d_2 &x \in (x_0,x_0+\epsilon)
\end{cases}
\end{equation}
is continuous at $x_0$, and $c_2 \leq c_1$, where the later condition we refer to negative bending.

If instead of the first condition in \eqref{eq:s2}, $\sigma^2[\utgv^*](x_0)=-\lambda_2$ holds, 
then the function is positively bending, i.e., $c_1 \leq c_2$.
\end{enumerate}
\end{lemma}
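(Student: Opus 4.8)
The plan is to exploit the two extremality conditions from Remark~\ref{re:2.5}, namely $\utgv^* \in \partial\Sim(\utgv)$ and $-\utgv^* \in \partial\TGV(\utgv)$, together with the explicit description $\TGV(u) = \sup\set{\int u\phi'' : \phi \in \Cspace(-1,1),\ \norm{\phi'}_{L^\infty}\leq\lambda_1,\ \norm{\phi}_{L^\infty}\leq\lambda_2}$. The first condition gives $\utgv^* = \utgv - \ud$ by \eqref{eq:stern_and_u}. The second condition says that $\utgv$ attains the supremum defining $\TGV$, tested against the dual function; more precisely, writing $\phi_* = \sigma^2[\utgv^*]$ (so that $\phi_*'' = \utgv^*$, $\phi_*' = \sigma^1[\utgv^*]$, $\phi_* = \sigma^2[\utgv^*]$), the extremality $-\utgv^*\in\partial\TGV(\utgv)$ is equivalent to $-\phi_*$ being an admissible competitor realizing $\TGV(\utgv)$, i.e.
\begin{equation*}
\TGV(\utgv) = -\int_{-1}^1 \utgv\,\phi_*''\,dx = -\int_{-1}^1 \utgv^*\,\utgv\,dx,
\end{equation*}
and that $\norm{\phi_*'}_{L^\infty}\leq\lambda_1$, $\norm{\phi_*}_{L^\infty}\leq\lambda_2$ (this last point we already know from \eqref{eq:optiTGV} and Lemma~\ref{le:hilfslemma}, since $\utgv^*\in\dualball{\TGV}$ forces $\norm{\sigma^i[\utgv^*]}_{L^\infty}\leq\lambda_i$).

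For part~\eqref{it:poly}: on a connected component of $B$ both constraints are strict, so for any $\psi\in\Cspace$ supported in that component and any small $t$, the function $\phi_* + t\psi$ is still admissible. Optimality of $-\phi_*$ in the concave maximization $\psi\mapsto \int\utgv\,\psi''$ (over the convex admissible set) forces $\int_{-1}^1 \utgv\,\psi'' = 0$ for every such $\psi$ — take variations in both signs. Since $\psi''$ ranges over all of $\Cspace$ of that component with the two vanishing-moment conditions $\int\psi'' = \int\psi''\,x = 0$, this says $\utgv$ is $L^2$-orthogonal on the component to every mean-zero, first-moment-zero test function, hence $\utgv$ restricted to the component is a polynomial of degree $\leq 1$. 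This is the analogue of the classical argument in \cite{PoeSch08}; the main thing to be careful about is that the variation has to respect the support and the constraint structure, which is why we localize inside the open set $B$.

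For part~(2): if $\abs{\sigma^1[\utgv^*]} \equiv \lambda_1$ or $\abs{\sigma^2[\utgv^*]}\equiv\lambda_2$ on an interval $A$, then on $A$ the corresponding primitive of $\utgv^* = \utgv - \ud$ is constant (up to sign), so differentiating, $\utgv^*= \utgv - \ud$ vanishes a.e.\ on $A$ — wait, more precisely $\sigma^1[\utgv^*]$ constant on $A$ gives $\utgv^* = 0$ a.e.\ on $A$, and $\sigma^2[\utgv^*]\equiv\pm\lambda_2$ on $A$ gives $\sigma^1[\utgv^*]=0$ hence again $\utgv^*=0$ on $A$; either way $\utgv = \ud$ on $A$.

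For parts~\eqref{item:jumping} and \eqref{item:bending}: away from $x_0$ inside $(x_0-\epsilon,x_0+\epsilon)$ the relevant constraint is strict, so part~\eqref{it:poly}'s argument (applied on $(x_0-\epsilon,x_0)$ and on $(x_0,x_0+\epsilon)$ separately) shows $\utgv$ is affine on each side: $\utgv = c_1x + d_1$ on the left, $c_2x+d_2$ on the right. In the jump case, near $x_0$ only the $\lambda_1$-constraint is active at a single point while the $\lambda_2$-constraint stays strict; perturbations $\phi_* + t\psi$ with $\psi\in\Cspace(x_0-\epsilon,x_0+\epsilon)$ are admissible provided $\norm{(\phi_*+t\psi)'}_{L^\infty}\leq\lambda_1$ — since $\phi_*'$ touches $\lambda_1$ only at $x_0$, admissibility for small $t$ requires $\psi'(x_0)\leq 0$ when $t>0$ (and the reverse for $t<0$ only if we stay below, so effectively a one-sided variation). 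Optimality then yields $\int\utgv\,\psi''\leq 0$ for all $\psi$ with $\psi'(x_0)\le 0$, $\psi$ compactly supported; integrating by parts twice and using the piecewise-affine form, the boundary terms at $x_0$ collapse to a condition forcing $c_1 = c_2$ (continuity of slope) and $d_2 \geq d_1$ (the jump has the sign dictated by $\sigma^1[\utgv^*](x_0)=+\lambda_1$); the sign flips if the value is $-\lambda_1$. The bending case is identical with the roles of the first and second primitive exchanged: near $x_0$ only $\abs{\sigma^2[\utgv^*]}$ reaches $\lambda_2$, at the single point $x_0$; the one-sided variation now constrains $\psi(x_0)$ rather than $\psi'(x_0)$, the boundary terms force continuity of $\utgv$ at $x_0$ (so $d_1 + c_1x_0 = d_2 + c_2 x_0$) and give the slope inequality $c_2 \leq c_1$, reversed to $c_1\leq c_2$ when $\sigma^2[\utgv^*](x_0)=-\lambda_2$.

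The main obstacle is making the one-sided variational argument fully rigorous: one must show that when $\phi_*'$ (resp.\ $\phi_*$) equals its bound $\lambda_1$ (resp.\ $\lambda_2$) at the single point $x_0$ and stays strictly below elsewhere on the $\epsilon$-neighborhood, there is enough room to build admissible perturbations with prescribed one-sided behavior at $x_0$ and extract from optimality exactly the inequality on the jump/bend — rather than an equality. This is essentially a contact-set / complementary-slackness analysis, and the sign of the inequality is pinned down precisely by which bound ($+\lambda_i$ or $-\lambda_i$) is touched; the piecewise-affine structure from part~\eqref{it:poly} reduces everything else to bookkeeping of boundary terms in two integrations by parts.
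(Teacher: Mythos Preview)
Your plan is essentially the paper's own approach. The only difference is presentational: the paper works from the dual side, using the Kuhn--Tucker condition $-\utgv\in\partial\TGV^*(\utgv^*)=N_{\dualball{\TGV}}(\utgv^*)$, which gives the variational inequality
\[
\int_{-1}^1 \utgv\,(w^*-\utgv^*)\,dx \ge 0 \qquad\text{for all } w^*\in\dualball{\TGV},
\]
and then constructs explicit competitors $w^*$; you phrase the same thing as perturbing the optimal test function $\phi_*=\sigma^2[\utgv^*]$ in the supremum defining $\TGV(\utgv)$. Since $w^*=\utgv^*+t\psi''$ corresponds exactly to $\phi_*\mapsto\phi_*+t\psi$, the two framings are equivalent, and your arguments for parts~(1) and~(2) match the paper's verbatim.

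For parts~(3) and~(4), where you correctly flag the one-sided variational argument as the main obstacle, the paper does precisely what you anticipate but with concrete constructions rather than an abstract ``$\psi'(x_0)\le 0$'' condition: it builds $w^*$ with prescribed signs of $\sigma^1[w^*-\utgv^*]$ on each side of $x_0$ together with integral (moment) conditions, so that after one integration by parts the variational inequality reduces directly to $\mu(c_2-c_1)\ge 0$ or to $a[(c_2-c_1)x_0+(d_2-d_1)]\ge 0$; running the construction with the opposite sign pattern yields the reverse inequality where appropriate, giving equality of slopes (jump case) or continuity (bending case). This is exactly the ``bookkeeping of boundary terms'' you describe, made explicit. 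One small caution: your admissibility discussion should be read as ``$\utgv^*+t\psi''\in\dualball{\TGV}$'' (i.e.\ $\norm{\sigma^i[\utgv^*]+t\,\sigma^i[\psi'']}_{L^\infty}\le\lambda_i$), not as $\phi_*+t\psi$ lying in $\Cspace$, since $\phi_*$ need not be smooth; the paper's $L^2$-based formulation \eqref{eq:tgv_tv1_tv2} handles this automatically.
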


\begin{figure}
\begin{tabular}{ccc}
 (1) & (3) & (4)\\
\includegraphics[]{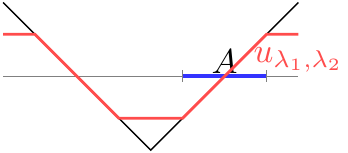}
& \includegraphics[]{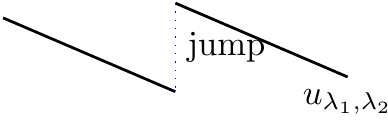}
& \includegraphics[]{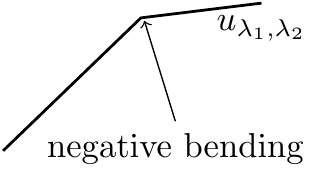}\\ 
\includegraphics[]{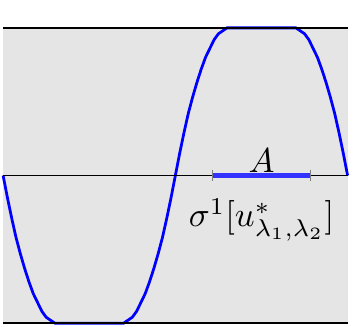}
& \includegraphics[]{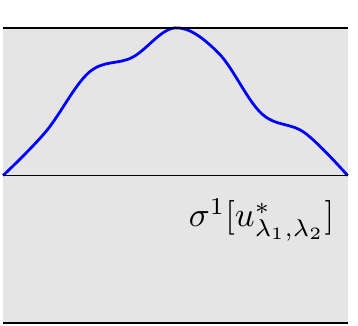}
& \includegraphics[]{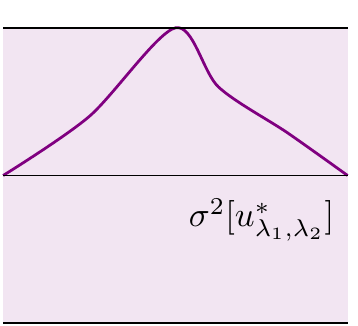}
\end{tabular}
\caption{Illustration of Lemma \ref{le:char} and \ref{le:charStern} }
\end{figure}

\begin{proof}
Recall that if $w^* \not \in \mathcal{H}^2$, then 
$w^* \not \in \dualball{\TGV}$, hence in the following, we restrict our attention to 
$w^* \in \mathcal{H}^2$. 
The Kuhn-Tucker condition
$-\utgv \in \partial \Reg^*\lr{ \utgv^*}$ guarantees that:
\begin{equation*}
 \Reg^*(v^*) - \Reg^*(\utgv^*) + \int_{-1}^1 \utgv(v^*-\utgv^*)\,dx\; \geq 0\,, \quad \forall v^* \in \mathcal{H}^2\;.
\end{equation*}
In particular, for $w^* \in \dualball{\TGV}$ we have
\begin{equation}\label{eq:kt}
\int_{-1}^1 \utgv \lr{w^*-\utgv^*}\,dx \geq 0\;.
\end{equation} 

\noindent \textbf{Item} (1):
Let $(a,b)$ be an open interval such that 
\begin{equation*} \abs{ \sigma^i[\utgv^*](x)} < \lambda_i\,, \text{ for all } i=1,2 \text{ and } x\in (a,b)\;.\end{equation*} 
Moreover, let $\phi \in \Cspace(-1,1)$ with $\supp(\phi) \subseteq (a,b)$ such that also 
\begin{equation*}\abs{\sigma_i[\utgv^*](x) + \phi^{(2-i)}(x)} < \lambda_i\,, \text{ for all }i=1,2 \text{ and } x\in (a,b)\;.\end{equation*} 
Then, 
\begin{equation*}w^*:= \utgv^* + \phi'' \in \dualball{\TGV}\end{equation*}
and therefore, it follows from \eqref{eq:kt} that
\begin{equation*}
-\int_a^b \phi'' \utgv\,dx\; \leq 0\,, \quad \forall \phi \in \Cspace(-1,1) 
\text{ with } \supp(\phi) \subseteq (a,b)\;.
\end{equation*}
Hence, $\utgv$ is a polynomial of order one in the interval $(a,b)$.

\noindent \textbf{Item} (2):
\begin{itemize}
\item \noindent i) Assume that 
      \begin{equation*}\sigma^1[\utgv^*](x) = \lambda_1\,, \quad \forall x \in (a,b)\;.\end{equation*}
      Then, 
      \begin{equation*}\ud(x)-\utgv(x) = \utgv^*(x) = \left(\sigma^1[\utgv^*]\right)'(x) = 0\,, \quad \forall x \in (a,b)\,,\end{equation*} 
      and therefore $\ud(x)=\utgv(x)$ in $(a,b)$. 
      
      \noindent ii) Assume that 
      \begin{equation*}\abs{\sigma^2[\utgv^*](x)} = \lambda_2\,, \quad \forall x \in (a,b)\;.\end{equation*} 
      From this it follows that 
      \begin{equation*} \begin{aligned}
       0 &= \left(\sigma^2[\utgv^*]\right)''(x) \\
         &= \left(\sigma^1[\utgv^*]\right)'(x) \\
         &= \utgv(x) - \ud(x)\,, \quad \forall x \in (a,b)\,,
       \end{aligned}
      \end{equation*}
      and therefore $\ud(x)=\utgv(x)$ in $(a,b)$.
  \end{itemize}

\noindent \textbf{Item} (4): Item \ref{item:bending} is based on the Assumption that there exists 
      $\epsilon > 0$ and $x_0 \in (-1,1)$  such that 
      \begin{equation*} \sigma ^2[\utgv^*](x_0)=\lambda_2 \text{ and } \sigma^2[\utgv^*](x_0 \pm y)<\lambda_2\,, \quad \forall y \in (0,\epsilon)\;.\end{equation*}
      Then, from Item \ref{it:poly} it follows that $\utgv$ is piecewise affine linear in $(x_0-\epsilon,x_0+\epsilon)$. To be precise, there exists coefficients 
      $c_1,d_1,c_2,d_2$ such that 
      \begin{equation}
      \label{eq:pwpol}
      \utgv(x) = \left\{ \begin{array}{c}
                           c_1 x + d_1 \quad \forall x \in (x_0-\epsilon,x_0)\,,\\
                           c_2 x + d_2 \quad \forall x\in (x_0,x_0+\epsilon)\;.
                          \end{array}
                          \right.
      \end{equation}
      
      We prove the assertion of Item \ref{item:bending} in two steps. 
\begin{enumerate}
\item Firstly we show that the coefficients of the piecewise polynomial satisfy 
$c_1\geq c_2$.
\item Secondly we show that $\utgv$ is continuous at $x_0$, such that we can conclude that it is
bending at $x_0$.
\end{enumerate}

\begin{itemize}
 \item[a)] To prove the first item, $c_1 \geq c_2$, we use some 
       $w^* \in \dualball{\TGV}$  (see Figure \ref{fig:w1}) satisfying 
       \begin{equation}
       \label{eq:wast_1a0}
         \sigma^1[w^*](x_0)=0\,, 
       \end{equation}
       \begin{equation}
       \label{eq:wast_1a}
       \begin{aligned}
       \sigma^1[w^*](x) &= \sigma^1[\utgv^*](x)\,, \quad \forall x \not \in (x_0-\epsilon,x_0+\epsilon)\,,\\
       \sigma^1[w^*](x) &\leq \sigma^1[\utgv^*](x)\,, \quad \forall x \in (x_0-\epsilon,x_0)\,,\\
       \sigma^1[w^*](x) &\geq \sigma^1[\utgv^*](x)\,, \quad \forall x \in (x_0,x_0+\epsilon)\,,\\
       \mu &:=-\int_{x_0 - \epsilon}^{x_0} 
(\sigma^1[w^*]-\sigma^1[\utgv^*])\,dx \\ &=
       \int_{x_0}^{x_0 + \epsilon}(\sigma^1[w^*]-\sigma^1[\utgv^*])\,dx\,,
       \end{aligned}
       \end{equation}
       and
       \begin{equation}
       \label{eq:wast_1b}
       \begin{aligned}       
       \sigma^2[w^*](x) &< \sigma^2[\utgv^*](x)\,, \quad \forall x \in (x_0-\epsilon,x_0+\epsilon) \backslash \set{x_0}\,, \\
       0 & <\sigma^2[\utgv^*](x_0)-\sigma^2[w^*](x_0)< \lambda_2\:.
       \end{aligned}
       \end{equation}
       With such a function $w^*$ it follows from \eqref{eq:kt} that
\begin{align*}
0 & \leq   \int_{-1}^1 \utgv \lr{w^*-\utgv^*}\,dx \\
  &   =  - \int_{x_0-\epsilon}^{x_0+\epsilon} \utgv' (\sigma^1[w^*] - \sigma^1[\utgv^*])\,dx \\
  &   =
c_1 \underbrace{ \int_{x_0-\epsilon}^{x_0} (\sigma^1[w^*]
-\sigma^1[\utgv^*])\,dx}_{=-\mu}  \\
& \qquad +c_2 
\underbrace{ 
\int_{x_0}^{x_0 + \epsilon} (\sigma^1[w^*] -\sigma^1[\utgv^*])\,dx}_{=\mu}\\
& = \mu(c_2-c_1)\,,
\end{align*}
which shows that $c_1\leq c_2$ since $\mu \geq 0$. 
\begin{figure}
 \begin{center}
\includegraphics{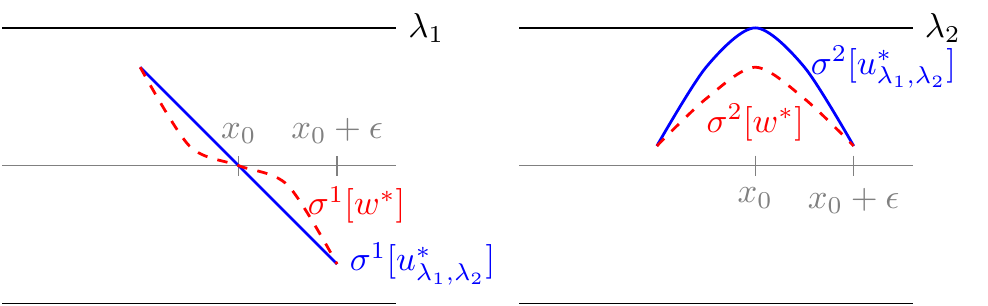}
\end{center}
\caption{
The figure shows the construction of $w^*$ satisfying \eqref{eq:wast_1a} and \eqref{eq:wast_1b}.
}\label{fig:w1}
\end{figure}

\item [b)] To prove the continuity of $\utgv$ we use a function $w^* \in 
\dualball{\TGV}$  which satisfies:
\begin{subequations}
\label{eq:w3}
\begin{align}
 \sigma^2[w^*](x)&=\sigma^2[\utgv^*](x)\,,\quad \forall x\not \in 
(x_0-\epsilon,x_0 + \epsilon) \label{eq:w3a}\\
\sigma^2[w^*](x)&\not = \sigma^2[\utgv^*](x)\,,\quad \text{for a. a.} x \in 
(x_0-\epsilon,x_0 + \epsilon) \label{eq:w3b}\\
\sigma^1[w^*-u^*](x_0)&=a \not = 0\,,  \label{eq:w3c}\\
\int_{x_0-\epsilon}^{x_0} \lr{ \sigma^1[w^*]-\sigma^1[\utgv^*]} \,dx&=
- \int_{x_0}^{x_0+\epsilon} \lr{\sigma^1[w^*]-\sigma^1[\utgv^*] }\,dx \;.
\end{align}
\end{subequations}
Such a function is represented in Figure \ref{fig:w2}.
\begin{figure}
\begin{center}
\includegraphics{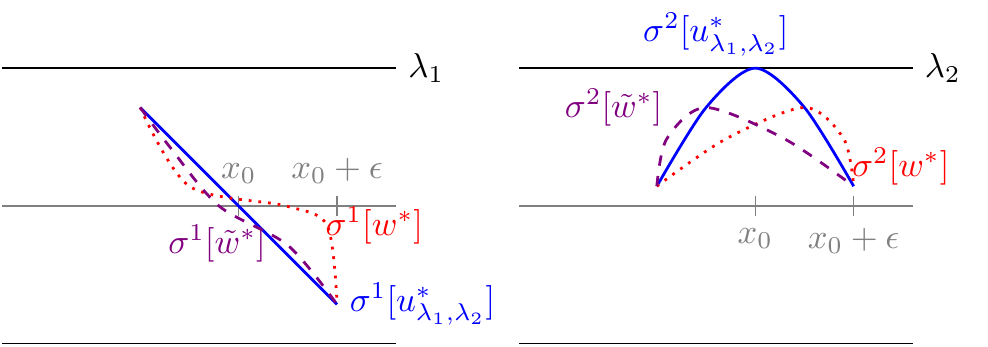}
\end{center}
\caption{$w^*$ satisfying \eqref{eq:w3}.}
\label{fig:w2}
\end{figure}
With such a function $w^*$ it follows from \eqref{eq:kt}, \eqref{eq:pwpol}, and integration by parts, that 
\begin{equation}
\label{eq:ungl1}
\begin{aligned}
 0 \leq & \int_{-1}^1 \utgv \lr{w^*-\utgv^*}\,dx \\
 =& \underbrace{\sigma^1[w^*-\utgv^*](x_0)}_{=a}
\lr{-c_1 x_0
-d_1 + c_2 x_0 +d_2} \\
&- 
c_1 \int_{x_0-\epsilon}^{x_0} (\sigma^1[w^*]-\sigma^1[\utgv^*])\,dx\\ 
&-
c_2 \int_{x_0}^{x_0 + \epsilon} (\sigma^1[w^*]-\sigma^1[\utgv^*])\,dx\;.
\end{aligned}
\end{equation}
Choosing $\tilde{w}^*$ such that $\sigma^1[\tilde{w}^*-\utgv^*](x_0)=-a$, but 
otherwise satisfying the same properties as $w^*$, that are \eqref{eq:w3a} 
and \eqref{eq:w3b}, then we obtain 
\begin{equation}
\label{eq:ungl2}
 \begin{aligned}
  0 &\leq  \int_{-1}^1 \utgv \lr{\tilde{w}^*-\utgv^*}\,dx \\
  &=
\underbrace{\sigma^1[\tilde{w}^*-\utgv^*](x_0)}_{=-a}
\lr{-c_1 x_0
-d_1 + c_2 x_0 +d_2}  \\
& - 
c_1 \int_{x_0-\epsilon}^{x_0} (\sigma^1[\tilde{w}^*]  -\sigma^1[\utgv^*])\,dx \\
& -
c_2 \int_{x_0}^{x_0 + \epsilon} (\sigma^1[\tilde{w}^*]   -\sigma^1[\utgv^*])\,dx\;.
 \end{aligned}
\end{equation}
Combining \eqref{eq:ungl2} and \eqref{eq:ungl1} finally shows
\begin{equation*}
\begin{aligned}
a \lr{-c_1 x_0 -d_1 + c_2 x_0 +d_2} &\leq 0\\
& \leq a \lr{-c_1 x_0 -d_1 + c_2 x_0 +d_2}\,,
\end{aligned}
\end{equation*}
such that we conclude that $-c_1 x_0
-d_1 + c_2 x_0 +d_2 =0$, which shows that $\utgv$ is continuous at $x_0$.
\end{itemize}

\noindent \textbf{Item} (3): Assume that $\utgv$ is as in \eqref{eq:bend}.
In the case where 
\begin{equation*}
\begin{aligned}
\sigma^1[\utgv^*](x_0) &=\lambda_1 \text{ and } \\
\sigma^1[\utgv^*](y) &<\lambda_1\,, \quad \forall y \in (x_0-\epsilon,x_0+\epsilon)\,,\setminus \set{x_0}
\end{aligned}
\end{equation*}
we select some $w^* \in \mathcal{H}^2$  such that 
 \begin{equation*}
 \utgv^*(x)=w^*(x)\,, \quad \forall x\not \in (x_0-\epsilon,x_0 +\epsilon)\,,
 \end{equation*}
 \begin{equation}\label{eq:psi1}
 \utgv^*(x)<w^*(x)\,, \quad \forall x \in (x_0-\epsilon,x_0)\,, 
 \end{equation}
 \begin{equation}\label{eq:psi2}
 \utgv^*(x) >w^*(x)\,, \quad \forall x \in (x_0,x_0+\epsilon)\,,
 \end{equation}
 and
 \begin{equation}\label{eq:psi3}
 \sigma^1[\utgv^*](x_0)=\sigma^1[w^*](x_0)\;.
 \end{equation}
Defining
\begin{align*}
 a:=\int_{x_0-\epsilon}^{x_0}\sigma^1[w^*-\utgv^*]\,dx
 \underset{\eqref{eq:psi3}}{=}- \int_{x_0}^{x_0+\epsilon}\sigma^1[w^*-\utgv^*]\,dx\,,
\end{align*}
\eqref{eq:kt} can be rewritten as 
\begin{align*}
0 \geq & \int_{-1}^1 \utgv \lr{ w^*-\utgv^*}\,dx \\
= & - (c_1 x_0+d_1){\sigma^1[w^*-u^*](x_0)} - c_1 a\\
  & + (c_2 x_0+d_2){\sigma^1[w^*-u^*](x_0)} + c_2 a\\
= & a (c_2-c_1) \;.
\end{align*}
Now replacing conditions \eqref{eq:psi1},\eqref{eq:psi2}, by 
\begin{align*}
 \utgv^*(x)>w^*(x)\,, \quad \forall x \in (x_0-\epsilon,x_0)\,, \\
 \utgv^*(x)<w^*(x)\,, \quad \forall x \in (x_0,x_0+\epsilon)\,,
\end{align*}
and again using \eqref{eq:kt} we also obtain
$a (c_1-c_2) \leq 0$. Thus $c_1=c_2$. 

\noindent (jumping down when $\sigma^1[\utgv^*] = +\lambda_1$). 
Using the same arguments as in previous items, we can also proof that 
$d_1\geq d_2$.
\end{proof}

\begin{lemma}\label{le:char}
 Let $\utgv$ be the minimizer of $u\rightarrow \Sim(u)+ \TGV(u)$. 
\begin{enumerate} 
 \item\label{it:char1} If there exists $x_0 \in (-1,1)$, such that 
$\utgv$ is as in \eqref{eq:jump}
(jumping up $d_1 \leq d_2$), 
then $\sigma^1\lre{ \utgv^* } (x_0) = \lambda_1$. 
\item \label{it:char2} If there exists $x_0 \in (-1,1)$, such that 
$\utgv$ is as in \eqref{eq:bend} and $c_2 \leq c_1$ (negative bending), 
then $\sigma^2\lre{ \utgv^* } (x_0) = \lambda_2$.
\item \label{it:char3} If there exists an interval $A$ such that 
$\utgv(x)=\ud(x)$ for 
$x\in A$, then one of the two statements holds
\begin{enumerate}
\item\label{it:a} $\abs{\sigma^1\lre{\utgv^*}(x)} = \lambda_1$ for $x\in A$, or
\item\label{it:b} $\abs{\sigma^2\lre{\utgv^*}(x)} = \lambda_2$ and 
$\sigma^1\lre{\utgv^*}(x)=0$ for $x \in A$.
\end{enumerate}
\end{enumerate}
\end{lemma}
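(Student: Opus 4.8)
The plan is to deduce Items~\ref{it:char1}--\ref{it:char3} from the Kuhn--Tucker inequality \eqref{eq:kt}, $\int_{-1}^1 \utgv(w^*-\utgv^*)\,dx\ge 0$ for all $w^*\in\dualball{\TGV}$, together with the structural facts already available: $\utgv^*\in\dualball{\TGV}$ (Remark~\ref{re:dual}), hence $\sigma^1[\utgv^*]$ is continuous with $\norm{\sigma^1[\utgv^*]}_{L^\infty}\le\lambda_1$ and $\sigma^2[\utgv^*]$ is $C^1$ with $\norm{\sigma^2[\utgv^*]}_{L^\infty}\le\lambda_2$ (by \eqref{eq:abschaetzung}--\eqref{eq:big_hundianer} and $\utgv^*\in L^2(-1,1)$); and $\utgv$ is affine on every connected component of the open set $B$ (Lemma~\ref{le:charStern}, item~\ref{it:poly}). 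The last fact shows at once that any $x_0$ at which $\utgv$ genuinely jumps (\eqref{eq:jump} with $d_1<d_2$) or genuinely bends (\eqref{eq:bend} with $c_1\neq c_2$) satisfies $x_0\notin B$, i.e.\ $\abs{\sigma^1[\utgv^*](x_0)}=\lambda_1$ or $\abs{\sigma^2[\utgv^*](x_0)}=\lambda_2$; the content of Items~\ref{it:char1} and~\ref{it:char2} is to single out which one holds, and with which sign.

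The decisive step for Item~\ref{it:char1} is to test \eqref{eq:kt} with competitors $w^*\in\dualball{\TGV}$ that agree with $\utgv^*$ outside a small interval $(x_0-\epsilon,x_0+\epsilon)$ on which \eqref{eq:jump} holds and whose second primitive $\sigma^2[w^*]$ also agrees with $\sigma^2[\utgv^*]$ off that interval (equivalently $\int_{x_0-\epsilon}^{x_0+\epsilon}(\sigma^1[w^*]-\sigma^1[\utgv^*])\,dx=0$). Integrating by parts on $(x_0-\epsilon,x_0)$ and $(x_0,x_0+\epsilon)$ separately and inserting \eqref{eq:jump}, all terms cancel except the jump contribution, leaving $\int_{-1}^1\utgv(w^*-\utgv^*)\,dx=(d_1-d_2)(\sigma^1[w^*](x_0)-\sigma^1[\utgv^*](x_0))$; since $d_1<d_2$, \eqref{eq:kt} yields $\sigma^1[w^*](x_0)\le\sigma^1[\utgv^*](x_0)$ for every admissible $w^*$, so $\sigma^1[\utgv^*](x_0)$ is maximal in this class. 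As $\sigma^1[\utgv^*](x_0)\le\lambda_1$ anyway, and as one can construct an admissible competitor whose first primitive at $x_0$ strictly exceeds any value below $\lambda_1$ --- by bumping $\sigma^1$ upward near $x_0$ and absorbing the mass change through a dip elsewhere in the interval, staying within $\abs{\sigma^1[w^*]}\le\lambda_1$ and $\abs{\sigma^2[w^*]}\le\lambda_2$ --- this forces $\sigma^1[\utgv^*](x_0)=\lambda_1$. The competing alternative $\abs{\sigma^2[\utgv^*](x_0)}=\lambda_2$ with $\abs{\sigma^1[\utgv^*](x_0)}<\lambda_1$ is excluded beforehand: it would make $x_0$ an interior extremum of the $C^1$ function $\sigma^2[\utgv^*]$, hence $\sigma^1[\utgv^*](x_0)=0$, and then the bending item (or, on a plateau, the second item) of Lemma~\ref{le:charStern} would render $\utgv$ continuous at $x_0$, contradicting the jump; likewise the degenerate configurations where $\sigma^1[\utgv^*]\equiv\pm\lambda_1$ on a whole subinterval around $x_0$, which give $\utgv\equiv\ud$ there, are ruled out by the standing regularity of the data.

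Item~\ref{it:char2} is the mirror image. One tests \eqref{eq:kt} with $w^*=\utgv^*+\phi''$ for $\phi\in\Cspace(x_0-\epsilon,x_0+\epsilon)$ small enough that $w^*\in\dualball{\TGV}$; the same integration by parts, now using continuity of $\utgv$ at $x_0$, reduces the left side of \eqref{eq:kt} to $(c_2-c_1)\phi(x_0)$, so $c_1>c_2$ shows that $\sigma^2[\utgv^*](x_0)$ is maximal over admissible perturbations of the second primitive, and exhibiting a competitor reaching $\lambda_2$ gives $\sigma^2[\utgv^*](x_0)=\lambda_2$; the alternative $\abs{\sigma^1[\utgv^*](x_0)}=\lambda_1$ with $\abs{\sigma^2[\utgv^*](x_0)}<\lambda_2$ is dismissed by a short case analysis through the jump item (\ref{item:jumping}) and the second item of Lemma~\ref{le:charStern} --- an isolated value $\pm\lambda_1$ forces a jump, a one-sided plateau at $\pm\lambda_1$ forces $\utgv=\ud$ with nonzero constant $\sigma^1[\utgv^*]$, both incompatible with a genuine bend. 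For Item~\ref{it:char3}, $\utgv=\ud$ on $A$ gives $\utgv^*\equiv 0$ on $A$, hence $\sigma^1[\utgv^*]$ is a constant $s_1$ on $A$ and $\sigma^2[\utgv^*]$ is affine there with slope $s_1$; if $\abs{s_1}=\lambda_1$ this is case~\ref{it:a}; otherwise $\abs{\sigma^1[\utgv^*]}<\lambda_1$ on $A$, and on $A\cap B$ the function $\utgv=\ud$ is affine by item~\ref{it:poly}, so under the standing hypotheses (in which $\ud$ is not affine on a non-degenerate subinterval of $A$) the set $A\cap B$ has empty interior; thus $\abs{\sigma^2[\utgv^*]}=\lambda_2$ densely in $A$, and by continuity, the bound $\le\lambda_2$, and affineness this forces $\sigma^2[\utgv^*]\equiv\pm\lambda_2$ on $A$, whence $s_1=0$ and we are in case~\ref{it:b}.

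I expect the duality manipulations and the integration-by-parts identities to be routine; the real work, and the most likely source of technical friction, is the explicit construction of the competitor functions $w^*$ --- one must alter a single value of one primitive of $\utgv^*$ near $x_0$ (or on $A$) while simultaneously keeping both pointwise bounds $\abs{\sigma^1[w^*]}\le\lambda_1$, $\abs{\sigma^2[w^*]}\le\lambda_2$ and the endpoint-matching conditions, which is awkward precisely when the other constraint is itself (nearly) active at $x_0$ --- together with the accompanying case distinctions, and the mild regularity one must assume of the data $\ud$, needed to dispose of the degenerate situations in which a constraint is active on an entire subinterval around $x_0$.
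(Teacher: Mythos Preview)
Your proposal is correct and takes essentially the same route as the paper: both argue by testing the Kuhn--Tucker inequality \eqref{eq:kt} (which the paper rephrases equivalently via $\TGV(\utgv)=-\int\utgv\,\utgv^*\,dx$ together with the supremum definition of $\TGV$) against competitors $w^*\in\dualball{\TGV}$ that agree with $\utgv^*$ outside a small neighborhood of $x_0$, and then reduce $\int\utgv(w^*-\utgv^*)\,dx$ by integration by parts to the product of the jump/bend coefficient with a controllable increment of $\sigma^i[w^*](x_0)$. Your outline is in fact more careful than the paper's terse proof in two respects: you explicitly dispose of the case where the \emph{other} constraint is the one active at $x_0$ by invoking the converse items of Lemma~\ref{le:charStern} (the paper simply assumes there is room for the competitor without comment), and for Item~\ref{it:char3} you correctly flag the mild data hypothesis that $\ud$ be non-affine on subintervals of $A$, which the paper hides behind the single line ``The proof is analog to (1),(2).''
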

\begin{proof}
Recall that if $\utgv$ is different from $\ud$, then $\utgv$ is a polynomial 
(piecewise). 
Set $I:=(x_0-\epsilon,x_0 +\epsilon)$.
\begin{enumerate}
 \item Now assume that $\utgv$ is as in \eqref{eq:jump} and 
 $\sigma^1\lre{\utgv^*}(x)< \lambda_1$ for $x\in I$. 
 Then we can find $w^* \in \dualball{1}{\lambda_1} \cap 
\dualball{2}{\lambda_2}$ 
 such that 
\begin{align*}
 w^*(x) &= \utgv^*(x)  &\text{for }x \in (-1,1) \setminus I,\\
 \sigma^1\lre{w^*}(x_0)&>\sigma^1 \lre{\utgv^*}(x_0), \\
 \sigma^2\lre{w^*}(x)&=\sigma^2 \lre{\utgv^*}(x) &\text{for }x \in (-1,1) 
\setminus I\;.
\end{align*}
The last condition implies that 
\begin{equation*}
 \int_{x_0-\epsilon}^{x_0+\epsilon} x \lr{ \utgv^*-w^*} dx =0
\end{equation*}
such that
\begin{align*}
\int_{-1}^{1} \utgv (\utgv^*-w^*) dx
&= 
d_1 \int_{x_0-\epsilon}^{x_0} \lr{ \utgv^*-w^*}\,dx+
d_2 \int_{x_0}^{x_0 + \epsilon} \lr{ \utgv^*-w^*}\,dx\\
&=\underbrace{(d_2 - d_1)}_{\geq 0}
  \underbrace{ \lr{  \sigma^1\lre{w^*}(x_0) - 
   \sigma^1\lre{\utgv^*}(x_0)}}_{>0}>0\;.
\end{align*}
Now this, together with \eqref{eq:optiTGV}, would give
\begin{equation*}
 \TGV(\utgv) = - \int_{-1}^{1} \utgv \utgv^* dx < -\int_{-1}^{1} \utgv w^*dx\;,
\end{equation*}
which contradicts the definition of $\TGV$ as the supremum of such integrals.
Hence $\sigma^1\lre{\utgv^*}$ must be maximal at $x_0$.
\item 
Set $I:=(x_0-\epsilon,x_0 +\epsilon)$ and assume that 
$\utgv$ is as in \eqref{eq:bend} and 
\newline
$\sigma^2\lre{\utgv^*}(x)< \lambda_2$ for $x\in I$. 
Then we can find $w^* \in \dualball{\TGV}$ 
such that 
\begin{align*}
 w^*(x) &= \utgv^*(x)  &\text{for }x \in (-1,1) \setminus I\\
 \sigma^2\lre{w^*}(x_0)&>\sigma^2 \lre{\utgv^*}(x_0) \\
 \sigma^1\lre{w^*}(x)&=\sigma^1 \lre{\utgv^*}(x) &\text{for }x \in (-1,1) 
\setminus I\;.
\end{align*}
The last condition and the continuity of $\utgv$ at $x_0$ imply that 
\begin{align*}
 \int_{x_0-\epsilon}^{x_0+\epsilon} &\utgv \lr{ \utgv^*-w^*} dx
= - \int_{x_0-\epsilon}^{x_0+\epsilon} \utgv' \sigma^1\lre{ \utgv^*-w^*} dx\\
&= - c_1 \int_{x_0-\epsilon}^{x_0}  \sigma^1\lre{ \utgv^*-w^*} dx
   - c_2 \int_{\epsilon}^{x_0+\epsilon}  \sigma^1\lre{ \utgv^*-w^*} dx
\end{align*}
such that
\begin{align*}
\int_{-1}^{1} \utgv \lr{\utgv^*-w^*} dx
=\underbrace{ (c_2 - c_1)}_{\leq 0}
 \underbrace{ \lr{  \sigma^2\lre{\utgv^*-w^*}(x_0)} }_{<0}>0\;.
\end{align*}
Now this, together with \eqref{eq:optiTGV}, would give
\begin{equation*}
 \TGV(\utgv) = - \int_{-1}^{1} \utgv \utgv^* dx< -\int_{-1}^{1} \utgv w^* dx\;,
\end{equation*}
which contradicts the definition of $\TGV$ as a supremum.
Hence \newline $\sigma^1\lre{\utgv^*}$ must be maximal at $x_0$.

\item The proof is analog to (1),(2).

\end{enumerate}
\end{proof}

\section{Example 1}
\label{se:ex1}
In the following we calculate the specific minimizers of $\TV, \TV^2$ and $\TGV$-minimization
for the test data, 
\begin{equation}
\label{eq:ud_ex1}
 \begin{aligned}
  \ud: (-1,1) &\to \R\\
  x &\mapsto \abs{x}-\frac{1}{2}\;.
 \end{aligned}
\end{equation}
In this case we have
\begin{equation*}
\lambda_1 \dualnorm{\ud}{\TV_{\lambda_1}^1} =
\dualnorm{\ud}{\TV_1^1} =\norm{\sigma^1[\ud]}_{L^\infty}
=\int_{-1}^{-\frac{1}{2}}\ud \,dx = \frac{1}{8}\end{equation*}
and   
\begin{equation*}
\lambda_2 \dualnorm{\ud}{\TV_{\lambda_2}^2} =  \dualnorm{\ud}{\TV_1^2}=\norm{\sigma^2[\ud]}_{L^\infty}=
\abs {\sigma^2[\ud](0)} =
\frac{1}{12} =\frac{2}{3}\dualnorm{\ud}{\TV_1^1}\;.\end{equation*} 
\begin{figure}
\includegraphics[scale=0.95]{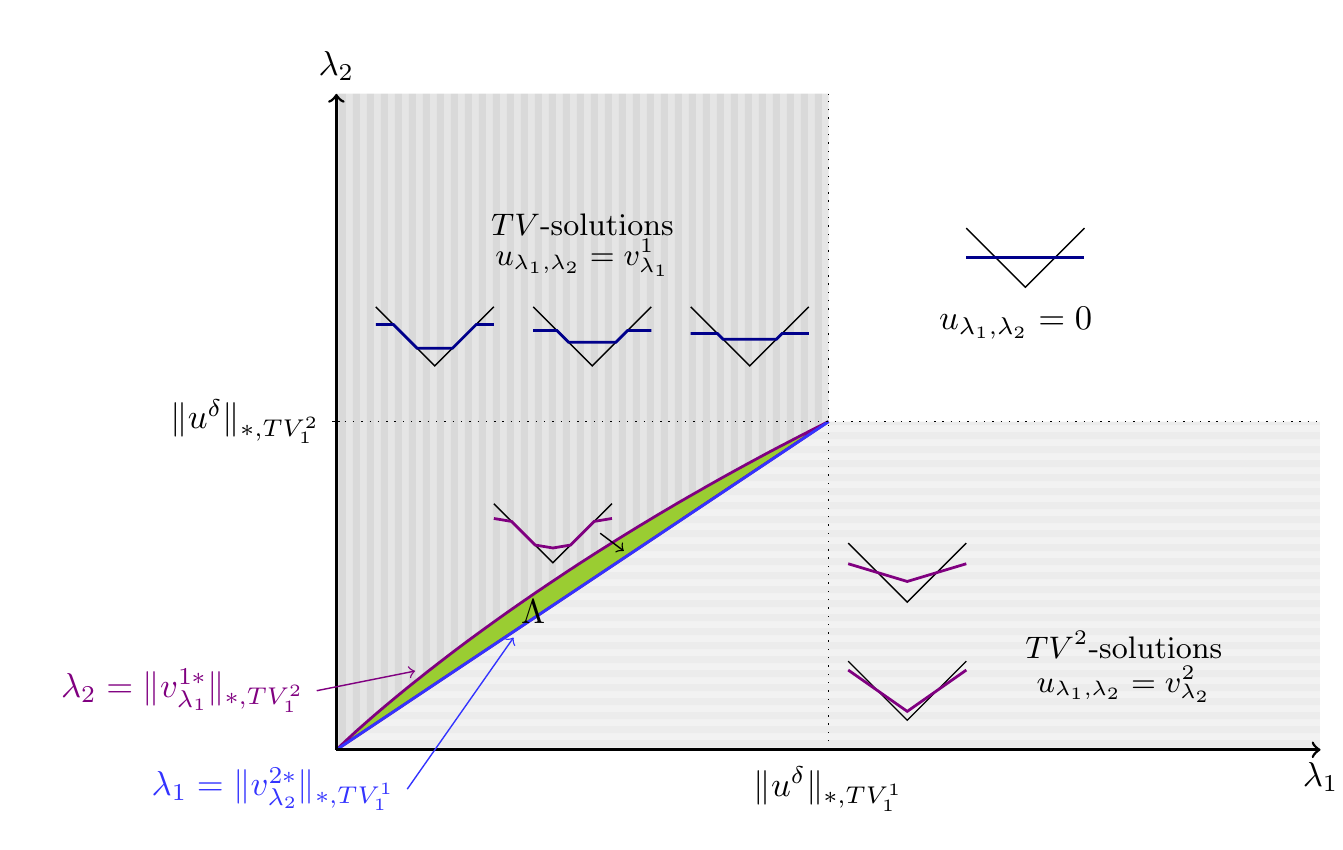}
\caption{
This $(\lambda_1,\lambda_2)$-diagram show regions where $\G$-minimizers are
different or equal to $F^i$-minimizers. 
In the region with the horizontal lines we have $\utgv=\tvm{2}$, that 
is, the $\TGV$-minimizer equals the $\TV^2$-minimizer.
In the green region where $(\lambda_1,\lambda_2) \in \Lambda$, $\TGV$-minimizers are different from $\TV^1,\TV^2$ minimizers, respectively. 
}
\label{fig:region1}
\end{figure}

\subsection{$\TV$-minimizer} 
Using the same methods as in \cite{PoeSch08}, we find that for given data \eqref{eq:ud_ex1}, the minimizer of the $\TV$-functional $\F^1$ is given by 
\begin{align*} 
\tvm{1} = 
\begin{cases} 
\sqrt{2 \lambda_1}-\frac{1}{2}&\abs{x}\leq \sqrt{2 \lambda_1}\\ 
\ud(x)&\sqrt{2 \lambda_1}<\abs{x}\leq 1-\sqrt{2 \lambda_1}\\ 
\frac{1}{2}-\sqrt{2 \lambda_1}&1-\sqrt{2 \lambda_1}< \abs{x}\leq 1 
\end{cases} 
\end{align*} (see Figure \ref{fig:exabs} (right)). 

The function $\tvm{1}$ and its dual $\tvmast{1}$ satisfy the following properties:
\begin{enumerate}
\item $\TV\lr{ \tvm{1}}=2-4\sqrt{2 \lambda_1}$,
\item $\dualnorm{\tvmast{1}}{\TV_1^1} = \sigma^1\lre{\tvmast{1}}\lr{\frac{1}{2}} =
\lambda_1$, and 
\item $\dualnorm{\tvmast{1}}{\TV_1^2}  = \abs{\sigma^2\lre{\tvmast{1}}(0)} =
\lambda_1\lr{1-\frac{2}{3}\sqrt{2\lambda_1}}.$
\end{enumerate}
Hence, from Lemma \ref{le:regions} it follows that, as long as
\begin{align}\label{eq:bound1a}
\lambda_2\geq \dualnorm{\tvmast{1}}{\TV_1^2} = \lambda_1\lr{1-\frac{2}{3}\sqrt{2 \lambda_1}},
\end{align} 
the $\TV^1$-minimizer is also the $\TGV$-minimizer and 
\begin{equation*} 
\lambda_1 \TV^1_1(\tvm{1}) = \TV_{\lambda_1}^1(\tvm{1}) = \TGV(\tvm{1})= \lambda_1 (2-4\sqrt{2\lambda_1})\,.
\end{equation*}

\subsection{$\TV^2$-minimizers} 
For $\ud$ from \eqref{eq:ud_ex1} the minimizer of $\F^2$ is given by 
\begin{equation*}\tvm{2}=\lr{ 1-\frac{1}{ \dualnorm{\ud}{\TV_1^2}}\lambda_2}^+ \ud \,,\end{equation*}
where 
\begin{equation*} f^+(x) = \max \set{f(x),0}\,.\end{equation*}
Using Lemma \ref{le:regions} it follows that for
\begin{align}\label{eq:bound2a}
\lambda_1\geq \dualnorm{\tvmast{2}}{\TV_1^1} =
\lr{1-\frac{1}{ \dualnorm{\ud}{\TV_1^2}}\lambda_2} \dualnorm{\ud}{\TV_1^1}
\end{align}
the $\TV^2$-minimizer, i.e., the minimizer of $\F^2$, is also a minimizer 
of the $\TGV$-functional $\G$.
In Figure \ref{fig:region1} we illustrate the
$(\lambda_1,\lambda_2)$-region where the minimizers of $\G$ are equal to minimizers of
$\F^2$.

\subsection{$\TGV$-minimizer} 
\newcommand{\clambda}{c_{\lambda_1,\lambda_2}}
\newcommand{\dlambda}{d_{\lambda_1,\lambda_2}}
\newcommand{\wtgv}{w_{\lambda_1,\lambda_2}}

Firstly, we calculate the set $\Lambda$ (cf. Definition \ref{de:Lambda}) for which the $\TGV$-minimizer is different from 
the $\TV^i$-minimizers, respectively. 
For this particular data $\ud$ this means that for 
\begin{equation*}\lambda_2 \notin \Lambda_2:=\left[
\frac{1}{12}-\frac{2}{3}\lambda_1,
\lr{1-\frac{2}{3}\sqrt{2 \lambda_1}}\lambda_1 \right]\,,\end{equation*}
the minimizer of the $\TGV$-functional $\G$ equals a minimizer of a
$\TV^i$-functional $\F^i$, for some $i=1,2$.

Let now $\lambda_2 \in \Lambda_2$, which is the only case for which we can expect that the 
$\TGV$-minimizer is different to $\TV^i$-minimizers.

We introduce the two-parametric set of functions $W$, consisting
of all functions of the form,
\begin{align}
\label{eq:wtgv}
 w(x,c,d):=  
\begin{cases}  
d \abs{x}+c(1-d)-\frac{1}{2}& \abs{x}\leq c \\  
\ud  &c< \abs{x}\leq 1-c \\  
d \abs{x}+c(d-1)-d+\frac{1}{2}& \abs{x} > 1 - c 
\end{cases}\;,
\end{align}  
where $c\in \lre{0,\frac{1}{2}}$ and $d \in \lre{0,1}$. 
Note that 
\begin{itemize}
\item $w\lr{x,c,d}$ is continuous,
\item $w\lr{x,0,1}=w\lr{x,\frac{1}{2},1}=\ud(x)$ and $w\lr{x,\frac{1}{2},0}=0$,
\item $w\lr{x,c,0}=\tvm{1}(x)$ for $\lambda_1=\frac{1}{2}c^2$, 
\item $w\lr{x,\frac{1}{2},d}=\tvm{2}(x)$ for 
$\lambda_2=\lr{1-d} \dualnorm{\ud}{\TV_1^2}$.
\end{itemize}

\begin{center}
\includegraphics[]{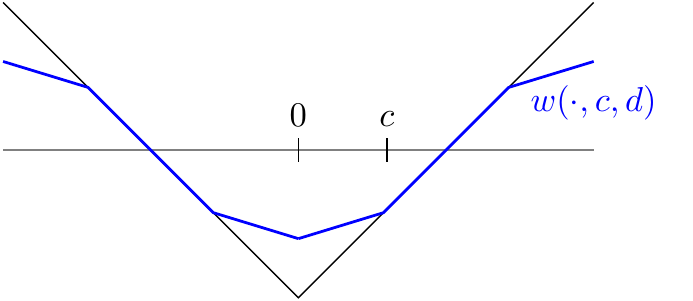}
\end{center}

Assuming that $\wtgv:=w(\cdot ,\clambda,\dlambda)$ minimizes $\G$, Lemma \ref{le:char} provides necessary criteria for optimality of the 
parameters $\clambda$ and $\dlambda$, which are derived in the following. Then, in Theorem \ref{th:wu} below, we prove that $\wtgv$ 
in fact minimizes $\G$.

Assuming that $\wtgv$ is a minimizer of $\mathcal{G}$ it follows from Lemma \ref{le:char} that:
\begin{itemize}
\item For $(\lambda_1,\lambda_2)$, such that $\dlambda>0$, $\wtgv$ bends at 
$x=0$.
In Remark \ref{re:41}, we calculate the coefficients such that $\dlambda=0$.
Lemma \ref{le:char} item \ref{it:char2} states that then 
$\wtgv^*=\wtgv-\ud$ satisfies 
\begin{align}
\label{eq:i}
\sigma^2\lre{\wtgv^*} (0) = -\lambda_2\;.
\end{align}
\item Lemma \ref{le:char} item \ref{it:a} states that since
$\wtgv(x) = \ud(x)$,  \newline $x\in (-1+\clambda,-\clambda)$,
we have
\begin{align}
 \lambda_1&=\abs{\sigma^1\lre{\wtgv^*}(\clambda)}=
\abs{\sigma^1\lre{\wtgv^*}(1-\clambda)} \;.
\label{eq:ii}
\end{align}
(Item \ref{it:b} cannot occur in this case, because  
$\sigma^1[\wtgv^*](-1+\clambda )\not=0$, for any $\dlambda \not= 1$)
\end{itemize}
Using a Computer Algebra system, we solve \eqref{eq:i}-\eqref{eq:ii} and obtain
\begin{align}  \label{eq:coefficients}
\clambda&=  \frac{3\lr{\lambda_1-\lambda_2}}{2 \lambda_1}\;,  
&\dlambda&= 1-\frac{8}{9}
\frac{\lambda_1^2}{(\lambda_2-\lambda_1)^2}\lambda_1\;.
\end{align}

\begin{remark}\label{re:41}
We want to see what happens for the special case when 
$(\lambda_1,\lambda_2) \in \partial \Lambda$, that is we consider the two sets 
of parameters: 
\begin{align*}
&\set{(\lambda_1,\dualnorm{\tvmast{1}}{\TV_1^2}), \lambda_1 \in 
(0,\dualnorm{\ud}{\TV_1^1})} \;,\\
&\set{(\dualnorm{\tvmast{2}}{\TV_1^1}, \lambda_2), \lambda_2 \in 
(0, \dualnorm{\ud}{\TV_1^2})}\;.
\end{align*}

\begin{itemize}
\item In the case $\lambda_2=\lambda_1\lr{1 + \frac{2}{3}\sqrt{2 \lambda_1}}$
(this is the case  where $\dualnorm{\tvmast{1}}{\TV_{\lambda_2}^2} = 1$, see Lemma 
\ref{le:regions}), \eqref{eq:coefficients} gives
$\dlambda=0$ and $\clambda=\sqrt{2\lambda_1}$.
One can see that then $\wtgv$ is either piecewise constant or equal to $\ud$ on
$(-1+\clambda,-\clambda) \cup (\clambda,1-\clambda)$. 
We see that for this particular choice of 
$(\lambda_1,\lambda_2)$ we have $\wtgv=\tvm{1}$, hence $\wtgv$ also minimizes 
$\F^1$.

\item For $\lambda_1 = \frac{3}{2} \lambda_2$ (this is the case where 
$\dualnorm{\tvmast{2}}{\TV_1^1} = \lambda_1$, see Lemma \ref{le:regions}), we have
$\clambda=\frac{1}{2}$ and $\dlambda =
\lr{1-\frac{\lambda_2}{ \dualnorm{\ud}{\TV_1^2}}}$. 
We see that $\wtgv=\tvm{2}$. 
\end{itemize}
\end{remark}

\begin{theorem}\label{th:wu}
For $(\lambda_1,\lambda_2) \in \Lambda$ and $\clambda$, $\dlambda$ satisfying 
\eqref{eq:coefficients}, 
$\utgv=\wtgv$.
\end{theorem}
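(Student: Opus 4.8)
The plan is to verify the sufficient optimality conditions from Lemma~\ref{le:min}: we must show that $\wtgv$ satisfies $-\int_{-1}^1 (\wtgv - \ud)\wtgv\,dx = \TGV(\wtgv)$ together with $\wtgv^* := \wtgv - \ud \in \mathcal{B}^*_{\TGV}$. The candidate $\wtgv$ and the parameters $\clambda,\dlambda$ were manufactured precisely so that the necessary conditions of Lemma~\ref{le:char} hold, so the content of the theorem is that these necessary conditions, exploited correctly, actually close the loop and become sufficient.

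First I would compute $\wtgv^*$ explicitly. Since $\wtgv$ is an even, continuous, piecewise affine function that equals $\ud$ on $(\clambda, 1-\clambda)$ (and its reflection), $\wtgv^*$ is supported on $[-\clambda,\clambda]\cup([-1,-1+\clambda]\cup[1-\clambda,1])$ and is an explicit piecewise affine function there; one checks directly that $\wtgv^* \in \mathcal{H}^2$, i.e.\ $\int \wtgv^* = \int x\,\wtgv^* = 0$ (this should follow from evenness plus one moment computation, or be forced by the construction). Then I would compute the primitives $\sigma^1[\wtgv^*]$ and $\sigma^2[\wtgv^*]$ in closed form. Using \eqref{eq:coefficients} and the defining equations \eqref{eq:i}--\eqref{eq:ii}, I would show $\norm{\sigma^1[\wtgv^*]}_{L^\infty} \le \lambda_1$ and $\norm{\sigma^2[\wtgv^*]}_{L^\infty} \le \lambda_2$; the maxima are attained at $x = \clambda$ (and $x = 1-\clambda$) for $\sigma^1$, and at $x = 0$ for $\sigma^2$, by \eqref{eq:ii} and \eqref{eq:i} respectively, and one must verify that in between and outside these points the bounds are not violated (monotonicity of the primitives between sign changes of $\wtgv^*$ does most of the work here). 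By \eqref{eq:big_hundianer} this yields $\wtgv^* \in \mathcal{B}^*_{\TGV}$.

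It remains to establish the energy identity $\TGV(\wtgv) = -\int_{-1}^1 \wtgv^* \wtgv\,dx$. The inequality $\TGV(\wtgv) \ge -\int \wtgv^*\wtgv\,dx$ is automatic: since $\wtgv^* \in \mathcal{B}^*_{\TGV}$, the function $\phi = \sigma^2[\wtgv^*]$ is an admissible test function in \eqref{eq:tgv_1d} (after the usual density/mollification argument to handle the boundary conditions, as in Lemma~\ref{le:hilfslemma}), giving $\TGV(\wtgv) \ge \int \wtgv\, (\sigma^2[\wtgv^*])'' = \int \wtgv\, \wtgv^* $... wait, with the correct sign: $-\wtgv^*$ is also admissible, so $\TGV(\wtgv) \ge -\int \wtgv\,\wtgv^*\,dx$. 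For the reverse inequality I would use that wherever $\wtgv$ is affine, any test $\phi$ contributes nothing beyond boundary terms, so the supremum defining $\TGV(\wtgv)$ is controlled by the behaviour of $\phi',\phi$ at the breakpoints $x = 0, \pm\clambda, \pm(1-\clambda)$; combined with the constraint $\norm{\phi'}_\infty \le \lambda_1$, $\norm{\phi}_\infty \le \lambda_2$ and the fact that $\sigma^1[\wtgv^*]$, $\sigma^2[\wtgv^*]$ saturate exactly these bounds at exactly those breakpoints with the correct signs (dictated by \eqref{eq:i}, \eqref{eq:ii}, the jump/bending conditions of Lemma~\ref{le:charStern}), one shows no test function can do better than $\phi = \sigma^2[\wtgv^*]$. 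Equivalently, and more cleanly, I would invoke \eqref{eq:optiTGV} in the form of a complementarity/duality argument: any $\phi$ admissible in \eqref{eq:tgv_1d} gives $\int \wtgv \phi'' \le \TV_{\lambda_1}^1(\wtgv)$ and $\le \TV_{\lambda_2}^2(\wtgv)$ by Lemma~\ref{le:hilfslemma}, and on the set $\Lambda$ the candidate is built so that the duality pairing $-\int \wtgv^*\wtgv$ already equals the value $\TGV(\wtgv)$ forced by these constraints.

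The main obstacle is the reverse energy inequality $\TGV(\wtgv) \le -\int \wtgv^*\wtgv\,dx$: the forward direction and the membership $\wtgv^* \in \mathcal{B}^*_{\TGV}$ are essentially bookkeeping with the primitives, but showing optimality of the particular test function $\sigma^2[\wtgv^*]$ among all admissible $\phi$ requires care — one must check that the $L^\infty$ constraints on $\phi$ and $\phi'$ are active in a compatible pattern at all five breakpoints simultaneously, with the signs of the constraints matching the jump direction, the bending direction, and the sign of $\wtgv^* = \ud - \wtgv$ on the pieces where $\wtgv = \ud$. This is exactly where the explicit formulas \eqref{eq:coefficients}, the case restriction $(\lambda_1,\lambda_2) \in \Lambda$, and Remark~\ref{re:41} (which handles the boundary $\partial\Lambda$ where $\wtgv$ degenerates to $\tvm{1}$ or $\tvm{2}$) enter: on $\Lambda$ one has $0 < \clambda < \tfrac12$ and $0 < \dlambda < 1$ strictly, so the breakpoint structure is exactly as described and the constraint pattern is non-degenerate. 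Once that compatibility is confirmed, Lemma~\ref{le:min} immediately gives $\utgv = \wtgv$.
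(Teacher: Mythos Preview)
Your overall strategy---verify $\wtgv^* \in \mathcal{B}^*_{\TGV}$ via \eqref{eq:big_hundianer} and then establish the energy identity $-\int \wtgv\wtgv^*\,dx = \TGV(\wtgv)$ so that Lemma~\ref{le:min} applies---is sound, and the lower bound and the dual-feasibility check are indeed just bookkeeping with the primitives. Where your route diverges from the paper is in the upper bound $\TGV(\wtgv) \le -\int \wtgv\wtgv^*\,dx$. You propose to attack the supremum in \eqref{eq:tgv_1d} directly: integrate by parts so that $\int \wtgv\phi''$ becomes a finite linear combination of the values $\phi(0),\phi(\pm\clambda),\phi(\pm(1-\clambda))$, and then argue that under the coupled constraints $\|\phi\|_\infty\le\lambda_2$, $\|\phi'\|_\infty\le\lambda_1$ the optimal $\phi$ is $\sigma^2[\wtgv^*]$. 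That works, but the constrained optimization is genuinely delicate: between $x=\clambda$ and $x=1-\clambda$ the sign pattern forces $\phi$ to travel from $+\lambda_2$ to $-\lambda_2$, and it is exactly the Lipschitz bound $\lambda_1$ that makes the $\sigma^1$ constraint active there; one has to check that no admissible $\phi$ beats $\sigma^2[\wtgv^*]$ at all five nodes simultaneously. Your alternative ``cleaner'' suggestion, bounding $\TGV(\wtgv)$ by $\min\bigl(\TV_{\lambda_1}^1(\wtgv),\TV_{\lambda_2}^2(\wtgv)\bigr)$ via Lemma~\ref{le:hilfslemma}, does \emph{not} close the gap: that minimum is strictly larger than $-\int\wtgv\wtgv^*\,dx$ on $\Lambda$.

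The paper bypasses this constrained optimization entirely by observing (see the remark preceding the proof) that for suitable parameters $\mu_1,\mu_2$ one has the \emph{decomposition}
\[
\wtgv = \frac{\mu_2}{\dualnorm{\ud}{\TV_1^2}}\,\tvmm{1} + \tvmm{2},
\]
a weighted sum of a $\TV^1$-minimizer and a $\TV^2$-minimizer. Subadditivity of $\TGV$ plus Lemma~\ref{le:hilfslemma} then gives $\TGV(\wtgv)\le \lambda_1\frac{\mu_2}{\dualnorm{\ud}{\TV_1^2}}\TV_1^1(\tvmm{1})+\lambda_2\TV_1^2(\tvmm{2})$, and the right-hand side is computed \emph{exactly} equal to $-\int\wtgv\wtgv^*\,dx$ using the already-known optimality relations \eqref{eq:optiTV} for $\tvmm{1}$ and $\tvmm{2}$. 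This decomposition is the paper's key idea: it converts the hard supremum into a sum of two suprema each of which is saturated by a known minimizer, so no direct analysis of the $\TGV$ test functions is needed. Your direct approach is more elementary and self-contained, but the decomposition is both shorter and explains structurally \emph{why} the answer on $\Lambda$ interpolates between the $\TV^1$ and $\TV^2$ solutions.
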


In order to proof the theorem in a compact way, we need the following remark:
\begin{remark}
In the next two items, we only rewrite $\wtgv$ as a linear combination of 
minimizers of $\F^i$, where we have to replace $\lambda_i$ by a different
parameter $\mu_i$ depending on $\lambda_1,\lambda_2$.

\begin{itemize}
\item 
For given 
${\mu_1} \in \left[ 0,\dualnorm{\ud}{\TV_1^1} \right], 
 {\mu_2} \in \left[0, \dualnorm{\ud}{\TV_1^2} \right]$   
set 
\begin{align}\label{eq:l1l2}
\lambda_1=\frac{{\mu_1} {\mu_2}}{ \dualnorm{\ud}{\TV_1^2}}
\qquad \text{ and }\qquad
\lambda_2 = \frac{{\mu_2}}{ \dualnorm{\ud}{\TV_1^2}} 
\dualnorm{\tvmm{1}}{\TV_1^2} =
12 \, {\mu_2} \lr{{\mu_1}-\frac{2}{3}\sqrt{2{\mu_1}^3}}.
\end{align} 
Comparing the coefficients of the piecewise terms of $\wtgv$, 
we see that for $(\lambda_1,\lambda_2) \in \Lambda$ we 
can write
\begin{equation}\label{eq:sum1}
 \wtgv=\frac{{\mu_2}}{ \dualnorm{\ud}{\TV_1^2}} \tvmm{1} +
\tvmm{2}\;
\end{equation}

\item 
On the other hand, for $(\lambda_1,\lambda_2) \in \Lambda$ given, we calculate 
${\mu_1}, {\mu_2}$ by
\begin{align*}
{\mu_2} =\frac{2}{27}\frac{\lambda_1^2}{(\lambda_1-\lambda_2)^2}, \quad
{\mu_1} =\frac{9}{8}\frac{(\lambda_1-\lambda_2)^2}{\lambda_1^2} 
\end{align*}
and express $\wtgv$ by \eqref{eq:sum1}.
\end{itemize}
\end{remark}
\begin{proof}
Using the triangle-inequality and the estimate $\TGV(u)\leq \lambda_1 TV^i(u)$, 
we obtain 
\begin{equation}\label{eq:eq1b}
\begin{aligned}
\TGV(\wtgv) 
&\lequnder{eq:sum1} \TGV \lr{\frac{{\mu_2}}{ \dualnorm{\ud}{\TV_1^2}} \tvmm{1}} 
+\TGV\lr{\tvmm{2}}\\
&\leq \lambda_1 \frac{{\mu_1}}{\dualnorm{\ud}{\TV_1^1}} TV^1_1\lr{\tvmm{1}}
+
\lambda_2   TV^2_1\lr{\tvmm{2}}
\;.
\end{aligned}
\end{equation}
Due to the definition of $\TGV$ and the choice of the
parameters
$\lambda_1,\lambda_2$, we have that $\wtgv^*:=\wtgv-\ud \in
\dualball{\TGV}$, such that
\begin{equation}\label{eq:eq1a}
-\int_{-1}^1 \wtgv (\wtgv-\ud)\,dx \leq \TGV(\wtgv) \;.
\end{equation}
In order to simplify the left side, we calculate
\begin{align*}
 -\int_{-1}^1 \tvmm{1} (\wtgv-\ud)  \,dx&=
 -\frac{{\mu_1}}{\dualnorm{\ud}{\TV_1^1}} \int_{-1}^1 \tvmm{1} (\tvmm{1}-\ud)  \,dx\\
 &= 
 \frac{{\mu_1}}{\dualnorm{\ud}{\TV_1^1}} \mu_1 TV^1\lr{\tvmm{1}}\;
\end{align*}
and
\begin{align*}
& -\int_{-1}^1 \tvmm{2} (\wtgv-\ud) \,dx\\
= & 
- \lr{   1- \frac{\mu_2}{  \dualnorm{\ud}{\TV_1^2} } } 
\frac{\mu_2}{\dualnorm{\ud}{\TV_1^2}}
\int_{-1}^1 \ud \lr{\tvmm{1}- \ud} \,dx\\
= &-  \lr{1-\frac{{\mu_2}}{ \dualnorm{\ud}{\TV_1^2}}}  
\frac{\mu_2}{\dualnorm{\ud}{\TV_1^2}}
 \lr{
 \int_{-1}^1 \abs{x} \lr{\tvmm{1}- \ud} \,dx
 - \frac{1}{2} 
\underbrace{ \int_{-1}^1  \lr{\tvmm{1}- \ud}\,dx}_{
\sigma^1\lre{\tvmm{}\!\!\!^*}(1)=0}}\\
= &-  \underbrace{ 
 \lr{1-\frac{{\mu_2}}{ \dualnorm{\ud}{\TV_1^2}}}  
 }_{TV^2\lr{\tvmm{2}} \frac{1}{2}}
 \frac{\mu_2}{\dualnorm{\ud}{\TV_1^2}}
 \underbrace{    \lr{\int_{-1}^{0} \sigma^1\lre{\tvmm{1}\!\!\!^*} \,dx-
                     \int_{0}^{1}  \sigma^1\lre{\tvmm{1}\!\!\!^*} \,dx}
                     }_{2\sigma^2\lre{\tvmm{1}\!\!\!^*}(0) = 2
\dualnorm{\tvmm{1}\!\!\!^*}{\TV_1^2}
= \lambda_2 \frac{\dualnorm{\ud}{TV_1^2}}{\mu_2}
} \\
\equnder{eq:l1l2}& \lambda_2 \TV^2_1\lr{\tvmm{2}} \;.
\end{align*}
In total we obtain
\begin{equation*}
\begin{aligned}
~ &  -\int_{-1}^1 \wtgv \lr{\wtgv-\ud}\,dx \\
=& \frac{{\mu_1}}{\dualnorm{\ud}{\TV_1^1}}
\underbrace{\lambda_1}_{=\frac{\mu_2}{ \dualnorm{\ud}{\TV_1^2}} \mu_1}
TV^1\lr{\tvmm{1}}
+ \lambda_2 TV^2_1\lr{\tvmm{2}}\;.
\end{aligned}
\end{equation*}
Comparing with \eqref{eq:eq1b} and \eqref{eq:eq1a} we have $$\TGV(\wtgv)=-\int
\wtgv\lr{\wtgv-\ud}\,,$$
which together with Lemma \ref{le:char} implies that $\wtgv$ is a minimizer of 
$\G$.
\end{proof}
 
\begin{figure}
\begin{center} 
\includegraphics[]{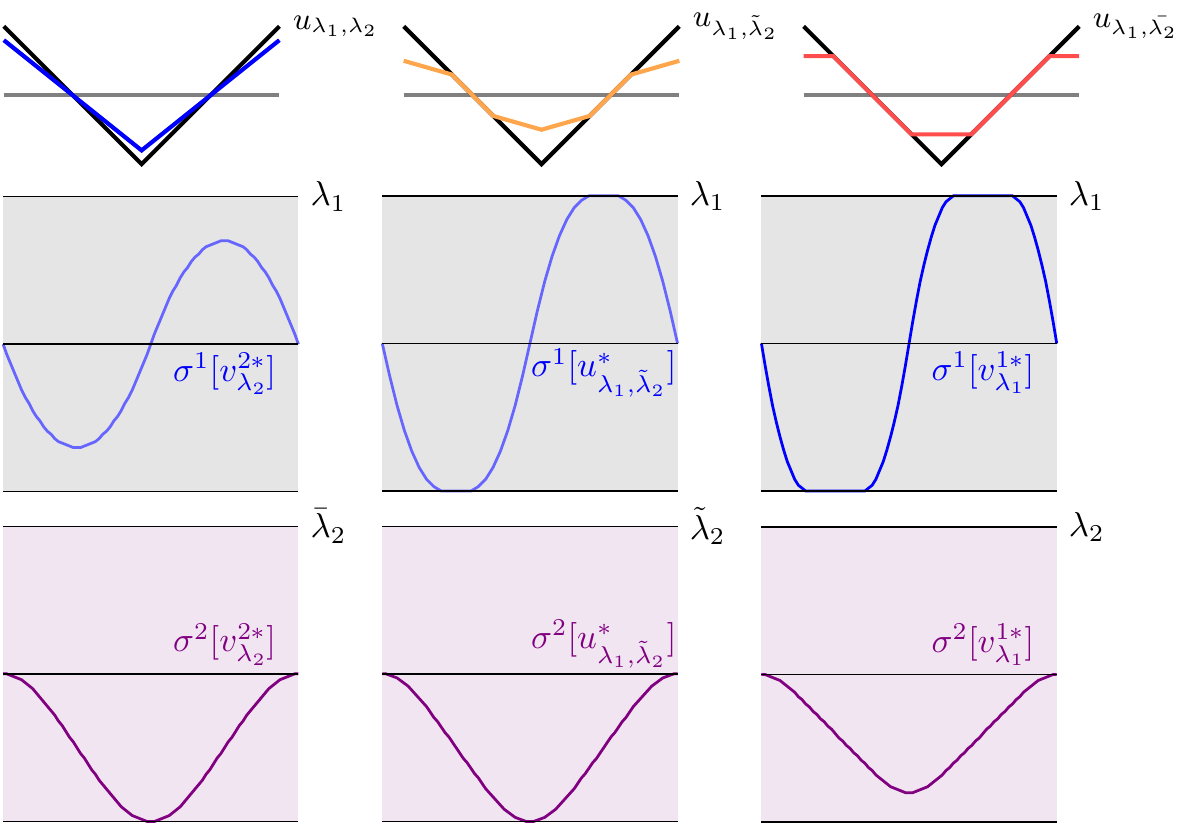} 
\end{center}
\caption{$\utgv$ for fixed $\lambda_1$ and changing $\lambda_2$. 
In this particular case we have $\tilde \lambda_2 \in \Lambda_2$ and 
$\tilde {\lambda_2}, \lambda_2 \not \in \Lambda_2$ with 
$\lambda_2< \tilde \lambda_2 < \bar{\lambda_2}$, 
such that $u_{\lambda_1,\bar{\lambda}_2}=\tvm{1}$ and
$u_{\lambda_1,\lambda_2}=\tvm{2}$.
}
\label{fig:exabs}
\end{figure}

\section{Example 2}
\label{se:ex2}
Consider now as second test-data
\begin{align} \label{eq:udchar}
\ud(x) &= \boldmath{1}_{ \left [-\frac{1}{2},
\frac{1}{2}\right]}(x)-\frac{1}{2}\;, 
\end{align} 
where $\boldmath{1}_{[a,b]}$ is the indicator function of the interval $[a,b]$.
Then 
\begin{equation*}
\dualnorm{\ud}{\TV_1^1}=\frac{1}{4}, \quad 
 \dualnorm{\ud}{\TV_1^2}=\frac{1}{8}\;.
\end{equation*} 
First we calculate minimizers of $\F^i$, as defined in \eqref{eq:l2TV}, in order
to obtain the sets 
$(\lambda_1,\lambda_2)$, where, according to Lemma \ref{le:regions}, the
$\TGV$-minimizers are equal to some $\TV^i$-minimizers.

\subsection{$\F^1$-minimizers}
From \cite{PoeSch08}, we know that
$\tvm{1}:=\lr{1-\frac{\lambda_1}{\dualnorm{\ud}{\TV_1^1}}} \ud$
minimizes $\F^1$ with the test data $\ud$. Furthermore, we have
\begin{equation*}
\dualnorm{\tvmast{1}}{\TV_1^2}=
\lambda_1 \frac{ \dualnorm{\ud}{\TV_1^2}}{\dualnorm{\ud}{\TV_1^1}}.\end{equation*} 
Applying Lemma \ref{le:regions}, we conclude that
$\tvm{1}$ minimizes $\G$ as long as
\begin{align}\label{eq:bound1}
\lambda_2 \geq \dualnorm{v_{\lambda_1}-\ud}{\TV_1^2}=\lambda_1
\frac{ \dualnorm{\ud}{\TV_1^2}}{\dualnorm{\ud}{\TV_1^1}}
=\frac{\lambda_1}{2}\;.
\end{align}

\subsection{$\F^2$-minimizers}
There are 3 different types of solutions (see \cite{PoeSch08} and Figure
\ref{fig:TV2solution}):

\begin{enumerate}
\item For ${\lambda_2} \in [0,\frac{1}{24}(\sqrt{2}\sqrt[4]{3}-\sqrt{3}))$
$\tvm{2}$ is bending four times and $u_{{\lambda_2}}=\ud$ in a region near 
$x=0$. 
$\dualnorm{\tvmast{2}}{\TV_1^1}=\norm{\sigma^1[\tvmast{2}]}_{L^\infty}=g(\lambda_2)$, 
where $g(\lambda_2)$ is a rational of polynomials of higher
order in $\lambda_2$, not written explicitly
here. 
\item 
For ${\lambda_2} \in \lr{\frac{1}{24}(\sqrt{2}\sqrt[4]{3}-\sqrt{3}), 
\frac{1}{24}}$
$\tvm{2}$ is bending at $x_1=\pm
6{\lambda_2} - \frac{1}{4}$, 
and $\tvm{2}=\ud$ in a region near $0$.
 Moreover 
$\dualnorm{\tvmast{2}}{\TV_1^1}=
\frac{1}{18}        
{\frac {1+48\,{{\lambda_2}}+576\,{{{\lambda_2}}}^{2}}{ \left( 1+8\,{
{\lambda_2}} \right) ^{2}}}
$.
\item For 
${\lambda_2} \in \lr{\frac{1}{24}, \frac{1}{8} }$
$\tvm{2}$ is bending once and
$\tvm{2}(x)=\lr{\frac{3}{2}-12{\lambda_2}} \lr{\frac{1}{2}-\abs{x}}$.
Additionally we can calculate 
$\dualnorm{\tvmast{2}}{\TV_1^1}=\frac{1}{16}+\frac{3}{2}{\lambda_2}$.
\end{enumerate}

The expressions $\dualnorm{\tvm{2} -\ud}{\TV_1^1}$ are used to calculate the set 
$\Lambda$, the set of parameters, where the $\G$-minimizer might be different 
to the $\F^1$ or $\F^2$-minimizer.

\begin{figure}
\begin{center} \includegraphics[]{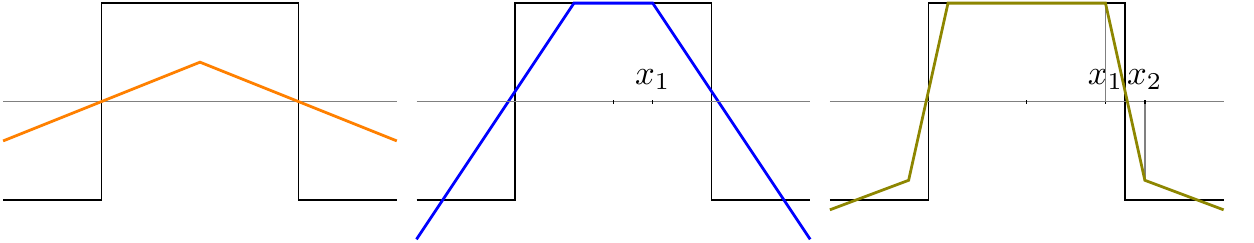} \end{center}
\caption{Three different types of $\F^2$ minimizers $\tvm{2}$: 
They can bend once (3), twice (2) or four times (1).}
\label{fig:TV2solution}
\end{figure}
We write the solutions in the form
\begin{align}\label{eq:tvm1}
 \tvm{2}=
\begin{cases}
 \ud(x) &\text{for} \abs{x}< x_1\\
 k_1 \abs{x} + d_1 &\text{for } x_1 \leq  \abs{x} < x_2\\
 k_2 \abs{x} + d_2 &\text{for } x_2 \leq \abs{x} \leq 1\
\end{cases}
\end{align}
keeping in mind that $x_1$ can be 0 (third case), or $x_2$ can be larger 
then one (third and second case).
$\tvm{1}$ is bending at $x_i$, such that $\sigma^2[\tvmast{1}]$ is extremal  at $x_0$ (hence $\pm \lambda_2$).
Thus, the coefficients $d_i,k_i$ are determined by the following equations:
\begin{align*}
 \sigma^2\lre{\tvmast{1}}(x_1)&=\lambda_2, \; \sigma^1\lre{\tvmast{1}}(x_1) = 0
&&\text{cases 1,2,3}\\
 \sigma^2\lre{\tvmast{1}}(x_2)&=-\lambda_2,\;  \sigma^1\lre{\tvmast{1}}(x_2)=0
&&\text{case 1}\;.
\end{align*}

\subsection{$\G$-minimizers}
We consider the same approach as for the previous example. 
Hence, first we calculate the set $\Lambda$ as in Definition
\ref{de:Lambda}, which is illustrated as the green (solid) set in 
Figure \ref{fig:region2}. 
We have:
\begin{equation*}
\begin{aligned}
\partial \Lambda & = \set{
  \lr{\lambda_1, \dualnorm{\tvmast{1}}{\TV_1^2}} : 
\lambda_1 \in \lre{0,\dualnorm{\ud}{\TV_1^1}}} \\
& \qquad \cup
\set{
\lr{\dualnorm{\tvmast{2}}{\TV_1^1},\lambda_2}: 
\lambda_2 \in \lre{ 0, \dualnorm{\ud}{\TV_1^2}}}\,.
\end{aligned}
\end{equation*}
Next we set up a general 
Ansatzfunction $\wtgv$ of piecewise affine functions, 
that is bending, once, twice or four times and jumping at 
$x=\pm 0.5$. Setting $\wtgv^*=\wtgv-\ud$, 
we find the coefficients (of the piecewise affine functions) by solving a 
number of non-linear equations coming
from the conditions - $\abs{\sigma^1[\wtgv^*](x)}=\lambda_1$ whenever the
Ansatzfunction
jumps and $\abs{\sigma^2[\wtgv^*](x)}= \lambda_2,\sigma^1[\wtgv^*](x)=0$,
whenever the Ansatzfunction bends. 
We omit the explicit formulas and further calculations. 

Then for $(\lambda_1,\lambda_2) \in \Lambda$ given, we found that the minimizers of
$\G$ can be written in a compact form:

\begin{theorem}
Let $(\lambda_1,\lambda_2) \in \Lambda$  and
${\mu_1},{\mu_2}$ be such that 
\begin{align*}
\lambda_2&=\frac{{\mu_1} {\mu_2}}{\dualnorm{\ud}{\TV_1^1}}, 
&\lambda_1&=
\frac{{\mu_1}}{\dualnorm{\ud}{\TV_1^1}} \dualnorm{\tvmm{2}\!\!\!^*}{\TV_1^1} \;.
\end{align*}
Then for $\ud$ as in \eqref{eq:udchar}
\begin{equation}\label{eq:sum2}
\utgv = \tvmm{1} +\frac{{\mu_1}}{\dualnorm{\ud}{\TV_1^1}}
\tvmm{2}
\end{equation}
minimize $\G$.
\end{theorem}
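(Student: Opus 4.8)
The plan is to follow the route of the proof of Theorem~\ref{th:wu}. Write $\wtgv := \tvmm{1} + \frac{\mu_1}{\dualnorm{\ud}{\TV_1^1}}\,\tvmm{2}$ for the function on the right-hand side of \eqref{eq:sum2} and put $\wtgv^* := \wtgv - \ud$. By Lemma~\ref{le:min} it suffices to verify the two assertions
\begin{equation*}
\wtgv^*\in\dualball{\TGV}\qquad\text{and}\qquad\TGV(\wtgv)=-\int_{-1}^{1}(\wtgv-\ud)\,\wtgv\,dx\,.
\end{equation*}

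The first point rests on the observation that, for the data \eqref{eq:udchar}, the $\F^1$-minimizer is a multiple of $\ud$, namely $\tvmm{1}=\bigl(1-\frac{\mu_1}{\dualnorm{\ud}{\TV_1^1}}\bigr)\ud$, so that a direct cancellation gives $\wtgv^*=\frac{\mu_1}{\dualnorm{\ud}{\TV_1^1}}(\tvmm{2}-\ud)$ and hence $\sigma^i[\wtgv^*]=\frac{\mu_1}{\dualnorm{\ud}{\TV_1^1}}\,\sigma^i[\tvmm{2}-\ud]$ for $i=1,2$. By the dual optimality of the $\F^2$-minimizer $\tvmm{2}$ one has $\norm{\sigma^2[\tvmm{2}-\ud]}_{L^\infty}=\mu_2$ and, in one dimension, $\norm{\sigma^1[\tvmm{2}-\ud]}_{L^\infty}=\dualnorm{\tvmm{2}\!\!\!^*}{\TV_1^1}$; the two parameter relations of the statement then give $\norm{\sigma^1[\wtgv^*]}_{L^\infty}=\lambda_1$ and $\norm{\sigma^2[\wtgv^*]}_{L^\infty}=\lambda_2$. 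Since both are $\le\lambda_i$, \eqref{eq:big_hundianer} yields $\wtgv^*\in\dualball{\TGV}$.

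For the energy identity, the inequality $-\int(\wtgv-\ud)\wtgv\,dx\le\TGV(\wtgv)$ is immediate from $\wtgv^*\in\dualball{\TGV}$ and the definition of the $\ast$-number (test the $\ast$-number of $\wtgv^*$ against $-\wtgv$), so it remains to prove $\TGV(\wtgv)\le-\int(\wtgv-\ud)\wtgv\,dx$. I would bound the left side using the triangle inequality together with the estimates $\TGV(u)\le\TV_{\lambda_i}^i(u)$ of Lemma~\ref{le:hilfslemma}, applying the $\TV^1$-estimate to the piecewise constant summand $\tvmm{1}$ and the $\TV^2$-estimate to the piecewise affine summand $\frac{\mu_1}{\dualnorm{\ud}{\TV_1^1}}\tvmm{2}$, to get $\TGV(\wtgv)\le\lambda_1\,\TV_1^1(\tvmm{1})+\frac{\mu_1}{\dualnorm{\ud}{\TV_1^1}}\lambda_2\,\TV_1^2(\tvmm{2})$. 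Then I would compute $-\int(\wtgv-\ud)\wtgv\,dx=-\frac{\mu_1}{\dualnorm{\ud}{\TV_1^1}}\int(\tvmm{2}-\ud)\bigl(\tvmm{1}+\frac{\mu_1}{\dualnorm{\ud}{\TV_1^1}}\tvmm{2}\bigr)\,dx$ term by term: the $\tvmm{2}$--$\tvmm{2}$ contribution becomes $\frac{\mu_1}{\dualnorm{\ud}{\TV_1^1}}\lambda_2\,\TV_1^2(\tvmm{2})$ via the optimality relation \eqref{eq:optiTV} for $\tvmm{2}$ and the first parameter relation, while the $\tvmm{2}$--$\tvmm{1}$ contribution, after substituting $\tvmm{1}=\bigl(1-\frac{\mu_1}{\dualnorm{\ud}{\TV_1^1}}\bigr)\ud$ and integrating by parts against the indicator, reduces to the boundary values $\sigma^1[\tvmm{2}-\ud](\pm\tfrac12)$, which the second parameter relation identifies with $\pm\lambda_1$, producing $\lambda_1\,\TV_1^1(\tvmm{1})$. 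The two sides coincide, so all inequalities above are equalities, and Lemma~\ref{le:min} (together with Lemma~\ref{le:char}) gives $\utgv=\wtgv$.

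The conceptual argument is short once the cancellation $\wtgv^*=\frac{\mu_1}{\dualnorm{\ud}{\TV_1^1}}(\tvmm{2}-\ud)$ is spotted; I expect the main obstacle to be bookkeeping. Because $\tvmm{2}$ occurs in three structurally different shapes (bending once, twice, or four times, cf.\ Figure~\ref{fig:TV2solution}), one must confirm in each regime that $\norm{\sigma^1[\tvmm{2}-\ud]}_{L^\infty}$ and $\norm{\sigma^2[\tvmm{2}-\ud]}_{L^\infty}$ are attained exactly at the jump points $\pm\tfrac12$ and at the bending points, and that the integration-by-parts reduction of the mixed integral goes through unchanged. A further, more routine, point is to check that for every $(\lambda_1,\lambda_2)\in\Lambda$ (Definition~\ref{de:Lambda}) the two parameter relations are solvable for $(\mu_1,\mu_2)$ in the admissible range $[0,\dualnorm{\ud}{\TV_1^1}]\times[0,\dualnorm{\ud}{\TV_1^2}]$, with $\mu_2$ in the regime corresponding to the number of bends of $\wtgv$; this amounts to inverting a piecewise rational map and is where a computer algebra system enters.
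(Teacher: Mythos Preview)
Your proposal is correct and follows essentially the same route as the paper's proof: both invoke Lemma~\ref{le:min}, exploit the cancellation $\wtgv^*=\frac{\mu_1}{\dualnorm{\ud}{\TV_1^1}}(\tvmm{2}-\ud)$ to place $\wtgv^*$ in $\dualball{\TGV}$ via \eqref{eq:big_hundianer}, sandwich $\TGV(\wtgv)$ between the triangle-inequality upper bound and the dual-ball lower bound, and match them by splitting $-\int\wtgv(\wtgv-\ud)\,dx$ into the $\tvmm{1}$- and $\tvmm{2}$-contributions (the former via the indicator structure of $\ud$ and $\sigma^1[\wtgv^*](\pm\tfrac12)=\pm\lambda_1$, the latter via \eqref{eq:optiTV}). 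Your closing caveats about verifying where $\norm{\sigma^1[\tvmm{2}-\ud]}_{L^\infty}$ is attained and about solvability of the $(\mu_1,\mu_2)$-relations are well taken; the paper asserts the former without further comment and does not address the latter.
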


\begin{proof} Lemma \ref{le:min} states that $\utgv$ is a
minimizer if \newline
$\TGV(\utgv) = - \int_{-1}^1 \utgv (\utgv-\ud)\,dx$ (see 
\eqref{eq:optiTGV}) and $\utgv^* \in \dualball{\TGV}$.
Using \eqref{eq:sum2} we can estimate $\TGV(\utgv)$ by 
\begin{equation*}
\begin{aligned}
\TGV(\utgv) 
&\leq \TGV\lr{\tvmm{1}} + \frac{{\mu_1}}{\dualnorm{\ud}{\TV_1^1}} \TGV\lr{\tvmm{2}}\\
&\leq 
 \TV^1_{\lambda_1}\lr{\tvmm{1}}+
  \frac{{\mu_1}}{\dualnorm{\ud}{\TV_1^1}} \TV^2_{\lambda_2}
  \lr{\tvmm{2}} \;.
\end{aligned}
\end{equation*}
Since $\tvmm{1}=\lr{1-\frac{\mu_1}{\dualnorm{\ud}{\TV_1^1}}}\ud$, we have
\begin{equation*}
 \utgv^* =\frac{ \mu_1 }{ \dualnorm{\ud}{\TV_1^1} } \tvmm{2}\!\!\!^*\;.
\end{equation*}
Note that from the choice of the parameters $\mu_1,\mu_2$ we have
\begin{align*}
\lambda_1 &=
\lambda_1 \dualnorm{\utgv^*}{\TV_{\lambda_1}^1}\\
&=
\norm{\sigma^1\lre{\tvmm{1}+\frac{\mu_1}{\dualnorm{\ud}{\TV_1^1} } \tvmm{2}-\ud }}_{L^\infty }\\ 
&= 
\frac{\mu_1}{\dualnorm{\ud}{\TV_1^1} }
\norm{\sigma^1\lre{\tvmm{2}-\ud }}_{L^\infty }\\
&=
\frac{\mu_1}{\dualnorm{\ud}{\TV_1^1} }\lambda_1 \dualnorm{\tvmm{2}\!\!\!^*}{\TV_{\lambda_1}^1}
\end{align*}
and also
\begin{equation*}
\begin{aligned}
\lambda_2 &=
\lambda_2 \dualnorm{\utgv^*}{\TV_{\lambda_2}^2}\\
&= 
\frac{\mu_1}{\dualnorm{\ud}{\TV_1^1} }
\norm{\sigma^2\lre{\tvmm{2}-\ud }}_{L^\infty }\\
&=
\frac{\mu_1}{\dualnorm{\ud}{\TV_1^1} }\underbrace{\lambda_2 \dualnorm{\tvmm{2}\!\!\!^*}{\TV_{\lambda_2}^2}}_{\mu_2}\;,
\end{aligned}
\end{equation*}
such that $\utgv^*\in \dualball{\TGV}$ and 
\begin{equation}\label{eq:eq1}
-\int_{-1}^1 \utgv (\utgv-\ud)\,dx \leq \TGV(\utgv) \;.
\end{equation}
Moreover, we can write
\begin{align*}
~&  \int_{-1}^1 \utgv \lr{\utgv - \ud} \,dx\\
=& 
\int_{-1}^1 \tvmm{1} \lr{\utgv - \ud} \,dx+ 
\frac{{\mu_1}}{\dualnorm{\ud}{\TV_1^1}} \int_{-1}^1 \tvmm{2} \lr{\utgv - \ud}\,dx.
\end{align*}
Since $\tvmm{1} = \lr{1- \frac{{\mu_1}}{\dualnorm{\ud}{\TV_1^1}} }\ud$, 
and $\ud(x) \in \set{-\frac{1}{2},\frac{1}{2}}$ 
we obtain
\begin{align*}
~ & 
\int_{-1}^1 \tvmm{1} \lr{\utgv - \ud} \,dx\\
= & \frac{1}{2} \lr{1- \frac{{\mu_1}}{\dualnorm{\ud}{\TV_1^1}}}\\
&\quad \lr{ 
- \int_{-1}^{-\frac{1}{2}} \lr{\utgv - \ud}   \,dx
+ \int_{-\frac{1}{2}}^{\frac{1}{2}} \lr{\utgv - \ud} \,dx
- \int_{\frac{1}{2}}^{1} \lr{\utgv - \ud} \,dx } 
\\
= &\frac{1}{2} \lr{1- \frac{{\mu_1}}{\dualnorm{\ud}{\TV_1^1}} }
\left( - \sigma^1[\utgv^*]\lr{-\frac{1}{2}}
+\sigma^1[\utgv^*]\lr{\frac{1}{2}}
-\sigma^1[\utgv^*]\lr{-\frac{1}{2}} \right.\\
&\phantom{aaaaaaaaaaaaaaaa}
\left.
-\sigma^1[\utgv^*]\lr{1}
+\sigma^1[\utgv^*]\lr{\frac{1}{2}}
\right)\;.
\end{align*}
Now by the choice of the parameter $\lambda_1$ we have
$\sigma^1[\utgv^*]\lr{-\frac{1}{2}} = +\lambda_1$ and 
\newline
$\sigma^1[\utgv^*]\lr{\frac{1}{2}} = -\lambda_1$ 
such that the equation above simplifies to
\begin{equation*}
-\int_{-1}^1 \tvmm{1} \lr{\utgv - \ud}\,dx 
=\lambda_1 \, 2 \lr{1- \frac{{\mu_1}}{\dualnorm{\ud}{\TV_1^1}} }
=\lambda_1 TV^1\lr{\tvmm{1}}
\;.
\end{equation*}
Next it remains to calculate
$\int_{-1}^1 \tvmm{2} \lr{\utgv-\ud}\,dx $.
Since $$\utgv-\ud = \tvmm{1}- \frac{\mu_1}{\dualnorm{\ud}{\TV_1^1}} \tvmm{2}
-\ud = \frac{\mu_1}{\dualnorm{\ud}{\TV_1^1}} \lr{\tvmm{2}-\ud}\,,$$ we have
\begin{align*}
 \int_{-1}^1 \tvmm{2} \lr{\utgv-\ud}\,dx &=
 \frac{\mu_1}{\dualnorm{\ud}{\TV_1^1}} \int_{-1}^1 \tvmm{2} \lr{\tvmm{2}-\ud}\,dx \\
 &=
 - \frac{\mu_1}{\dualnorm{\ud}{\TV_1^1}} \mu_2 TV^2_1(\tvmm{2}) \;
\end{align*}
where we used $TV^2_1(\tvmm{2}) = - \mu_2 \int_{-1}^1 \tvmm{2}\lr{\tvmm{2} - 
\ud}  
\;dx$, 
the optimality condition for $\F^2$-minimizers as in \eqref{eq:optiTV}.
Hence in total, using the connections between $\lambda_i$ and $\mu_i$, we obtain
\begin{align*}
- \int_{-1}^1 \utgv \lr{\utgv-\ud}  \,dx
&=
   \lambda_1 \TV^1_{1}(\tvmm{1}) 
+ \lambda_2 \frac{{\mu_1}}{\dualnorm{\ud}{\TV_1^1}} 
\TV^2_{1}(\tvmm{2}) \;.
\end{align*}
A Comparison with \eqref{eq:eq1} shows that
$\TGV \lr{\utgv} = - \int_{-1}^1 \utgv(\utgv-\ud)\,dx$, 
hence according to Lemma \ref{le:min}
$\utgv$ minimizes $\G$.
\end{proof}

\begin{figure}
\begin{center} 
\includegraphics[scale=0.7]{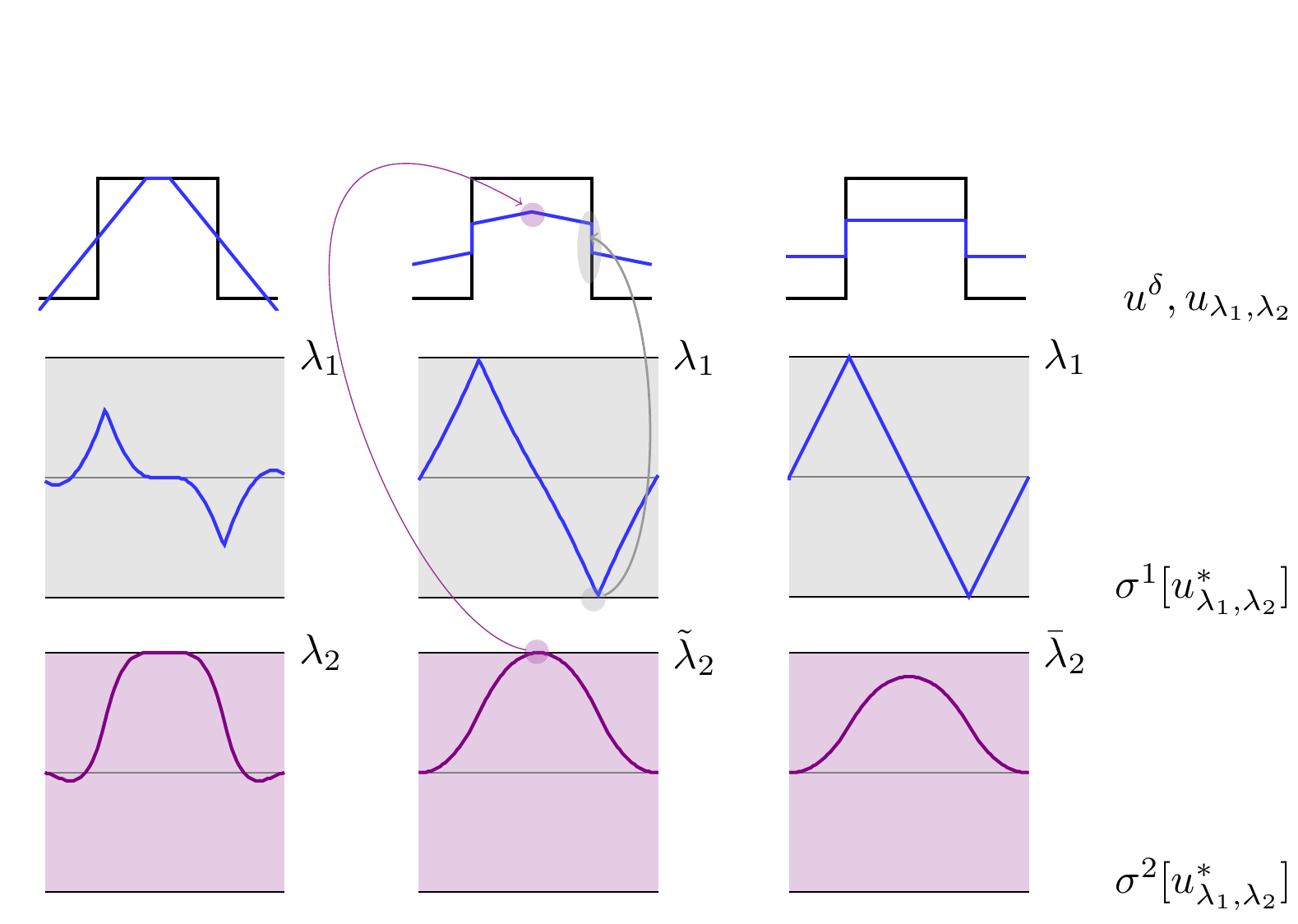} 
\end{center}
\caption{$\utgv$ for fixed $\lambda_1$ and changing $\lambda_2$. 
In this particular case we have $(\lambda_1,\tilde \lambda_2) \in \Lambda$ and 
$(\lambda_1,\tilde {\lambda_2}), (\lambda_1,\lambda_2) \not \in \Lambda$ with 
$\lambda_2< \tilde \lambda_2 < \bar{\lambda_2}$, 
such that $u_{\lambda_1,\bar{\lambda}_2}=\tvm{1}$ and
$u_{\lambda_1,\lambda_2}=\tvm{2}$.
}
\label{eq:exchar}
\end{figure}

\begin{figure}
\begin{center} \includegraphics[scale=0.9]{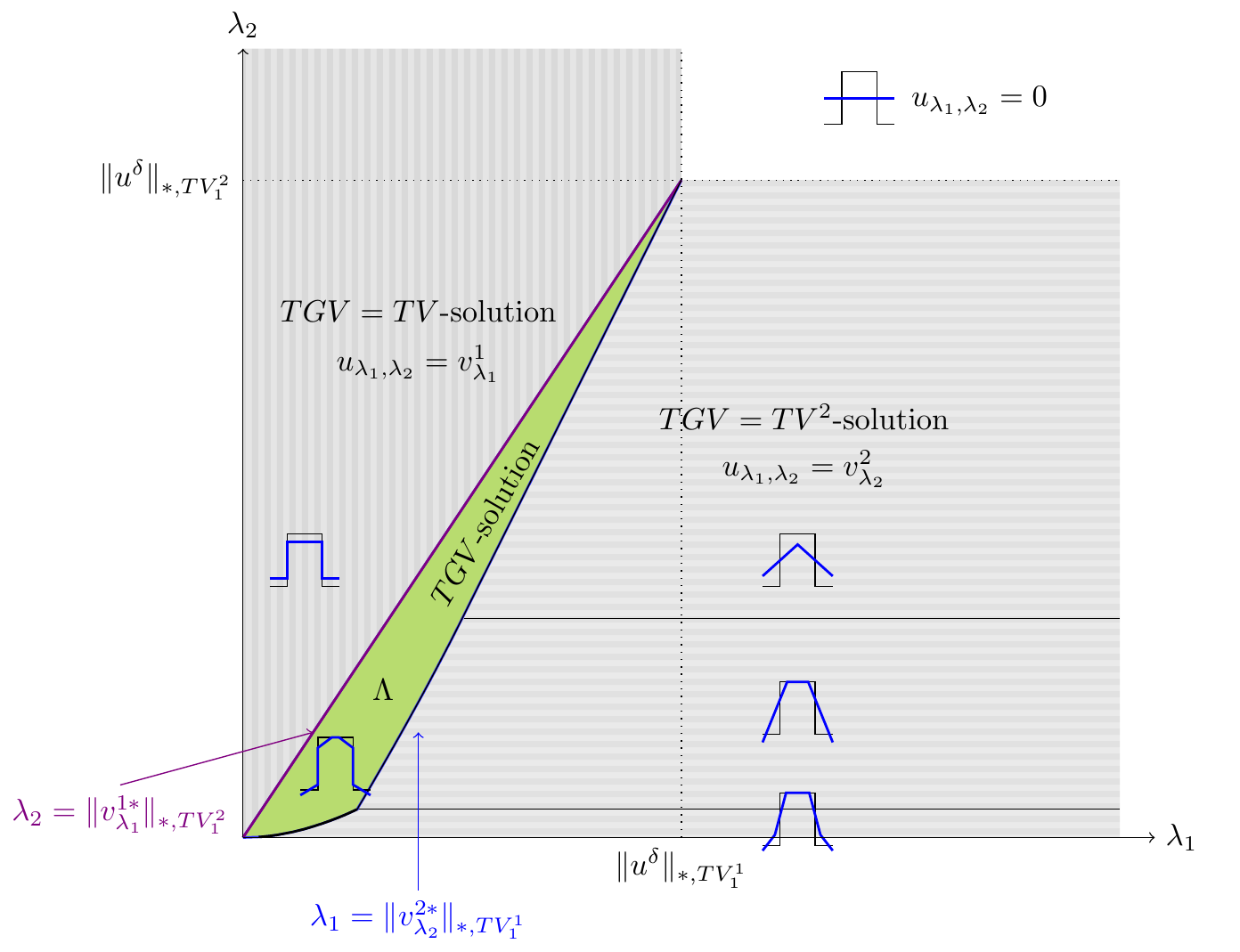} 
\end{center}
\caption{
Solid green region: $\Lambda$, here $\utgv\not= \tvm{i}$.
Gray region: $\utgv = \tvm{i}$, vertical lines: $i=1$,
horizontal lines: $i=2$. 
}
\label{fig:region2}
\end{figure}

\section{Example III}\label{se:ex3}
Finally we consider $\ud= x^2 - \frac{1}{3}$ but only sketch the different
minimizers of $\F_i$ and $\mathcal{G}$ in order to show that in general,
minimizers of $\mathcal{G}$ cannot be written as a sum of $\mathcal{F}_i$-
minimizers.
We have $ \dualnorm{\ud}{\TV_1^2}=\frac{1}{12}=0.833,
\dualnorm{\ud}{\TV_1^1}= \frac{2\sqrt{3}}{27}  \sim 0.12$.

\subsection{$\F^1$-minimizers.} 
Since $\ud$ is continuous, also the $\F_{1}$-minimizer is continuous. 
From the characterization of $\F_{1}$ minimizers we know that 
$\tvm{1}$ is either equal to $\ud$ in an interval $(\pm c_1, \pm c_2)$ or 
constant $\ud(c_1),\ud(c_2)$ in the other intervals. 
In Figure \ref{fig:example3} (left), we ilustrate $\tvm{1}$ for different values 
of $\lambda_1$.

\subsection{$\F^2$-minimizers.} 
In this case, we have to consider two different types of minimizers. 

\begin{itemize}
 \item  $\lambda_2$ large: (that is $\lambda \sim  \dualnorm{\ud}{\TV_1^2}$), 
$\tvm{2}$ is piecewise constant and bending at $x=0$.
Such solutions are constructed by considering the 
Ansatzfunctions $w(x,k):=k\lr{\abs{x}-\frac{1}{2}}$. The parameter $k$ is 
determined such that $\abs{ \sigma^2\lre{w(\cdot,k)-\ud}(0)}=\lambda_2$ (the 
Ansatzfunction $w$ is bending at $x=0$, hence the $\sigma^2$ of the dual 
minimizer has be extremal, hence equal to $\lambda_2$).
This Ansatzfunction works until for some $\lambda_2=\bar \lambda_2$, we have
$\ud(0)=w(0,k)$. 

\item Then for $\lambda_2\leq \bar \lambda_2$, we use a different 
Ansatzfunction $w$ that satisfies: $w(x)=\ud(x)$ for $x\in (-c,c)$ and some 
$c>0$ and 
$w(x)$ is affine linear in $(-1,-c) \cup (c,1)$ and continuous at $x=\pm c$.
The coefficients are determined such that 
$w \in \mathcal{H}^2$ and 
$\abs{ \sigma^2\lre{w-\ud}(c)}=\lambda_2$. 
\end{itemize}
We illustrated both types of solutions in Figure \ref{fig:example3} (right).

\subsection{$\mathcal{G}$-minimizers.}
For $(\lambda_1,\lambda_2) \in \Lambda$ as in Definition \ref{de:Lambda}, 
we set up an Ansatzfunction that satisfies the following:
\begin{itemize}
 \item $w$ is continuous,
 \item $w(x)=w(-x)$,
 \item $w(x)=\ud(x)$ for $x \in (c_2,c_3)$ and $0\leq c_2< c_3\leq 1$, 
 \item either $w$ is bending at $x=0$, or $w(x)=\ud(x)$ for $x \in [0,c_1)$ 
with $c_1<c_2$, 
 \item $w$ is piecewise affine linear else.
\end{itemize}

We illustrate minimizers for different choices of $(\lambda_1,\lambda_2)$  
in Figure \ref{fig:example3}.

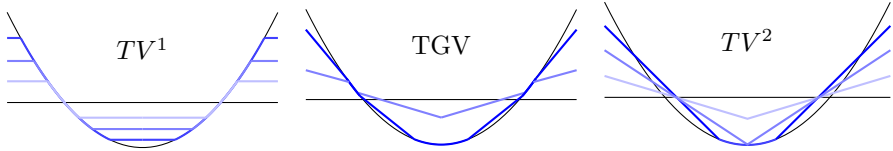
\begin{figure}
\begin{tikzpicture}[domain=-1:1,scale=1.8]

\draw (-1,0) -- (1,0);
\draw[color=black, samples=50,yshift=-0.333 cm] plot (\x,{pow(\x,2)})
node[right] {}; 

\foreach \x/\y/\a in {0.24/0.9/75,0.37/0.8/50,0.47/0.7/25}
\foreach \i in {1,-1}{

\draw[domain=\x:\y,color=blue!\a!, thick, samples=50,yshift=-0.333
cm,xscale=\i] plot
(\x,{pow(\x,2)}) node[right] {}; 
\draw[yshift=-0.333 cm,xscale=\i,color=blue!\a!, thick] (\x,\x*\x) --
(0,\x*\x);
\draw[yshift=-0.333 cm,xscale=\i,color=blue!\a!, thick] (\y,\y*\y) --
(1,\y*\y);
}
\node at (0,0.4) {$TV^1$};
\end{tikzpicture}
\begin{tikzpicture}[domain=-1:1,scale=1.8]
\draw (-1,0) -- (1,0);
\draw[color=black, samples=50,yshift=-0.333 cm] plot (\x,{pow(\x,2)})
node[right] { };

\draw[domain=-0.2:0.2,color=blue, thick, samples=50,yshift=-0.333 cm] plot
(\x,{pow(\x,2)}) node[right] {}; 
\foreach \i in {1,-1}
{
\draw[yshift=-0.333 cm,xscale=\i,color=blue, thick] (0.2,0.04) -- (0.6,0.36);
\draw[domain=0.6:0.7,color=blue, thick, samples=50,yshift=-0.333 cm,xscale=\i]
plot (\x,{pow(\x,2)}) node[right] {}; 
\draw[yshift=-0.333 cm,xscale=\i,color=blue, thick] (0.7,0.49) -- (1,0.85);
}

\foreach \i in {1,-1}
{
\draw[yshift=-0.333 cm,xscale=\i,color=blue!50!, thick] (0.0,0.2) --
(0.62,0.62*0.62);
\draw[domain=0.6:0.7,color=blue, thick, samples=50,yshift=-0.333 cm,xscale=\i]
plot (\x,{pow(\x,2)}) node[right] {}; 
\draw[yshift=-0.333 cm,xscale=\i,color=blue!50!, thick] (0.68,0.68*0.68) --
(1,0.55);
}

\node at (0,0.4) {TGV};
\end{tikzpicture}
\begin{tikzpicture}[domain=-1:1,scale=1.9]
\node at (0,0.4) {$TV^2$};
\def\c{0.2}
\def\k{0.992}
\def\a{0.8336}
\draw (-1,0) -- (1,0);
\draw[color=black, samples=50,yshift=-0.333 cm] plot (\x,{pow(\x,2)})
node[right] { }; 
\draw[domain=-0.2:0.2,color=blue!75!, thick, samples=50,yshift=-0.333 cm] plot
(\x,{pow(\x,2)}) node[right] {}; 
\foreach \i in {1,-1}
\draw[yshift=-0.333 cm,xscale=\i,color=blue, thick] (\c,\c*\c) --
(1,\a);
\draw[yscale=0.3,blue!25!,thick] (-1,0.5) -- (0,-0.5) -- (1,0.5);
\draw[yscale=0.66,blue!50!,thick] (-1,0.5) -- (0,-0.5) -- (1,0.5);
\end{tikzpicture}
\caption{minimizers of $\F_1$ (left), $\G$ (middle), $\F_2$ (right), 
for different parameters.
}\label{fig:example3}
\end{figure}

\section*{Acknowledgments}

This work has been supported by the Austrian Science Fund (FWF) within the national research networks Photoacoustic Imaging in Biology and Medicine (project S10505) and 
Geometry \& Simulation (project S11704).
C. P. acknowledges support by the Austrian Science Fund
(FWF) within the Schr\"odinger program (project J-297).

\def\cprime{$'$} \providecommand{\noopsort}[1]{}\def\cprime{$'$}

\end{document}